\newcommand{\so}{\mathfrak{so}}
\newcommand{\pp}{{\mathfrak{p}}} 
\newcommand{\nn}{{\mathfrak{n}}}
\definecolor{darkgreen}{rgb}{0.0, 0.5, 0.0}
\newcommand{\lin}{{\mathrm{lin}}} 
\renewcommand{\ss}{{\mathfrak{s}}}
  \newcommand{\w}{\omega}
\newcommand{\R}{\mathbb{R}}
\newcommand{\C}{\mathbb{C}}
\newcommand{\Z}{\mathbb{Z}}
\newcommand{\SO}{\mathrm{SO}}
\newcommand{\GG}{\mathrm{G}}
\newcommand{\KK}{\mathrm{K}}
     \newcommand{\cA}{\mathcal{A}}
     \newcommand{\cB}{\mathcal{B}}
  \newcommand{\cO}{\mathcal{O}}
\renewcommand{\O}{\mathcal{O}}             
\renewcommand{\gg}{\mathfrak{g}}        
\newcommand{\hh}{\mathfrak{h}}          
\newcommand{\kk}{\mathfrak{k}}          
\renewcommand{\tt}{\mathfrak{t}}        
  \renewcommand{\aa}{\mathfrak{a}}          
\renewcommand{\sl}{\mathfrak{sl}}   
\newcommand{\SL}{\mathrm{SL}}
\newcommand{\SU}{\mathrm{SU}}
\numberwithin{equation}{section}
\newtheorem{theorem}{Theorem}[section]
\newtheorem{lemma}[theorem]{Lemma}
\newtheorem{proposition}[theorem]{Proposition}
\theoremstyle{definition}
\newtheorem{example}[theorem]{Example}
\newtheorem{remark}[theorem]{Remark}
\begin{document}

\author{David Mart\'inez Torres}
\address{Applied Mathematics Department, ETSAM Section, Universidad Polit\'ecnica de Madrid, 
Avda. Juan de Herrera, 4, 28040, Madrid, Spain}
\email{df.mtorres@upm.es}

\title{Canonical domains for coadjoint orbits}

\begin{abstract}
This paper describes two real analytic symplectomorphisms defined on appropriate dense open subsets
of any coadjoint orbit of a compact semisimple Lie algebra. The first symplectomorphism sends the open dense
subset to a bounded subset of a standard cotangent bundle. The second symplectomorphism has as target a bounded subset 
of a hyperbolic coadjoint orbit of an associated non-compact semisimple Lie algebra. Therefore, coadjoint orbits 
of compact Lie algebras are symplectic compactifications of domains of cotangent bundles, and are in symplectic
correspondence with hyperbolic orbits of non-compact semisimple Lie algebras.
\end{abstract}

\maketitle


\section{Introduction}

Let $\gg$ be a compact semisimple Lie algebra. The Killing form intertwines the coadjoint and adjoint representations of 
$\gg$ on $\gg^*$ and $\gg$, respectively. 
Therefore, any adjoint orbit is equipped with the Kostant--Kirillov--Souriau symplectic form for which 
the adjoint action of $\gg$ is Hamiltonian.

Let $\GG$ be a connected Lie group whose Lie algebra is $\gg$. If $X$ is an adjoint orbit, then  it can be endowed with a complex structure (a K\"ahler structure with K\"ahler form the Kostant-Kirillov-Souriau symplectic form) such that the complexification
of $\GG$ acts on $X$ by holomorphic transformations, and thus so do any of its subgroups.  A group involution 
on $\GG$ determines one such subgroup upon complexification of its fixed point set. This is a so-called spherical subgroup:
Its action on $X$ has a finite number of orbits \cite{Sp,Ma}. In particular, it has a dense open orbit $X^*\subset X$.

Our first result in this paper concerns the existence of canonical coordinates for the restriction of the $\mathrm{KKS}$ symplectic form to $X^*$
for appropriate choices of involution.

\begin{theorem}\label{thm:canonical} Let $(X,\w)$ be an adjoint orbit of the compact semisimple Lie algebra $\gg$ endowed with its
$\mathrm{KKS}$ symplectic form. A Lie algebra involution whose opposite involution $\sigma$ acts on $X$ with non-empty fixed point set
determines a real analytic diffeomorphism
\begin{equation}\label{eq:canonical}
 \psi:(X^*,\w,Y,\sigma)\to (D\subset T^*L,d\lambda,E,\iota),
\end{equation}
where
\begin{itemize}
\item $Y$ is a Liouville vector field on $X^*$;
\item $L\subset X^*$ is the fixed point set of $\sigma$ on $X$ (and therefore it is a Lagrangian submanifold);
\item $D$ is an open bounded star-shaped neighborhood of the zero section of $T^*L$;
\item $d\lambda$ is the Liouville symplectic form of $T^*L$;
\item $E$ is the Euler vector field of  $T^*L$;
\item $\iota$ is the involution on $T^*L$ which sends a covector to its opposite.
\end{itemize}

The real analytic diffeomorphism in (\ref{eq:canonical}) is $\KK$-equivariant and compatible with the canonical (linear) momentum maps
for the $\KK$-action, where $\KK$ is the connected integration of the fixed point set of the involution. 
\end{theorem}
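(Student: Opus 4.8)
The plan is to realize $(X^*,\w)$ as the flow-out of the Lagrangian $L$ under a Liouville field and to transport that flow to the radial flow of $T^*L$, so that $Y\leftrightarrow E$, $\sigma\leftrightarrow\iota$ and $\lambda_X\leftrightarrow\lambda$ all match tautologically.

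\emph{Setup.} Write $\gg=\kk\oplus\pp$ for the $\pm1$-eigenspaces of the involution, identify $\gg\cong\gg^*$ by the Killing form, and let $\pr_{\kk}$ be the induced projection. First I would record the two structural facts that drive everything: (i) $\sigma$ is anti-symplectic, $\sigma^{*}\w=-\w$, so that its fixed locus $L$, being half-dimensional and isotropic, is Lagrangian; and (ii) $L\subset X\cap\pp$, whence $L$ is compact, $K$-invariant, and $\pr_{\kk}|_L=0$. The hypothesis that $\sigma$ has non-empty fixed point set guarantees $L\neq\varnothing$, and a genericity/dimension count places $L$ inside the dense orbit $X^*$. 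Fact (i) mirrors the model side, where $\iota^{*}d\lambda=-d\lambda$ and the zero section is Lagrangian.

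\emph{The Liouville field.} Next I would produce a primitive $\lambda_X$ of $\w$ on $X^*$ that is at once $K$-invariant and $\sigma$-anti-invariant: starting from a primitive on the open orbit, average over the compact group $K$ and then symmetrize by $\tfrac12(\mathrm{id}-\sigma^{*})$; since $\sigma^{*}\w=-\w$ this preserves the primitive property while forcing $\sigma^{*}\lambda_X=-\lambda_X$, hence $\lambda_X|_L=0$. Define $Y$ by $\iota_Y\w=\lambda_X$; then $\Lie_Y\w=d\lambda_X=\w$, so $Y$ is Liouville, $K$-invariant, with $\sigma_{*}Y=-Y$ and zero set exactly $L$. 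Because all operations are algebraic and analytic (crucially, no cut-offs enter), $\lambda_X$ and $Y$ are real analytic. An equivariant Weinstein/linearization step then supplies a real-analytic symplectomorphism $\psi_0$ of a neighborhood of $L$ onto a neighborhood of the zero section with $\psi_0^{*}\lambda=\lambda_X$, carrying $Y$ to $E$ and $\sigma$ to $\iota$.

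\emph{The global map.} I would extend $\psi_0$ by conjugating flows: writing $\phi^Y_t,\phi^E_t$ for the two Liouville flows, set $\psi(x)=\phi^E_T\big(\psi_0(\phi^Y_{-T}(x))\big)$ for $T$ large. The Liouville relations make this independent of $T$ and give $\psi^{*}\lambda=\lambda_X$, hence $\psi^{*}d\lambda=\w$ and $\psi_{*}Y=E$. Fiberwise boundedness of the image $D$ comes from compactness of $X$: forward $Y$-trajectories exit $X^*$ in finite Liouville time, capping the radial coordinate $e^{t}$; star-shapedness follows since $D$ is invariant under the downward flow $\phi^E_{-s}$, $s\ge0$. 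Keeping $\psi_0$ and all choices $K$-equivariant yields a $K$-equivariant $\psi$, identical on $L$ and intertwining $\sigma$ with $\iota$. Finally, $K$-invariance of $\lambda_X$ gives $\iota_{\xi_{X^*}}\lambda_X=\langle\pr_{\kk},\xi\rangle$ with vanishing constant (because $\pr_{\kk}|_L=0$ and $\lambda_X|_L=0$), which is precisely the canonical cotangent-lift momentum on $T^*L$; hence $\psi$ is compatible with the linear momentum maps.

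\emph{Main obstacle.} The delicate point is global: the formula for $\psi$ presupposes that the backward trajectory of every $x\in X^*$ enters the Weinstein neighborhood — i.e. that the $-Y$-basin of $L$ is all of $X^*$ and that distinct trajectories yield distinct feet and covectors — so that $\psi$ is a genuine diffeomorphism onto a fiberwise-bounded star-shaped $D$. Abstract Liouville dynamics does not grant this, and I expect to exploit the explicit algebraic geometry of the spherical $K_{\C}$-action on $X$ — a polar/gradient-map description of $X^*$ as $K$ times a transverse slice — to show that $Y$ is gradient-like with $L$ as its full downward limit set, ruling out recurrence. This explicit model is also what secures real-analyticity up to the natural boundary and pins down $D$ exactly.
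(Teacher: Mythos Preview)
Your outline has the right shape, but there is an unflagged gap before the one you acknowledge. From $\sigma^*\omega=-\omega$ and $\sigma^*\lambda_X=-\lambda_X$ one obtains $\sigma_*Y=Y$, not $\sigma_*Y=-Y$: pulling back $\iota_Y\omega=\lambda_X$ gives $\iota_{\sigma_*Y}(-\omega)=-\lambda_X$, hence $\iota_{\sigma_*Y}\omega=\lambda_X$. (This is consistent with the model side, where $\iota_*E=E$.) At $p\in L$ this says only that $Y_p$ lies in the $+1$-eigenspace $T_pL$ of $d\sigma_p$: anti-invariance of $\lambda_X$ forces $(\lambda_X)_p$ to annihilate $T_pL$, but imposes no constraint on the $-1$-eigenspace. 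An averaged primitive therefore gives no reason for $Y|_L=0$; $Y|_L$ is merely some vector field tangent to $L$, and without the vanishing the radial linearization and everything downstream collapses.

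The paper resolves both this and your flagged obstacle by building a \emph{specific} primitive rather than an averaged one. The open $\KK^\C$-orbit through $x$ is a holomorphic Lagrangian section of the Iwasawa ruling, identifying the preimage $\cA$ of $X^*$ with $T^{*1,0}\KK^\C(x)$; the compact real form $X^*\subset\cA$ is then an LS-section in Donaldson's sense, and the associated closed $1$-form is exact because it vanishes on $\KK(x)$, where the topology of $\KK^\C(x)$ is concentrated. The normalized primitive $h$ is a real-analytic, $\KK$- and $\sigma$-invariant K\"ahler potential for $\omega$ on $X^*$; a Hessian computation in root coordinates shows it is Morse--Bott with critical set exactly $\KK(x)$, and --- this is the decisive ingredient you are missing --- it is \emph{proper}, by the Azad--Loeb theorem for $\KK$-invariant strictly plurisubharmonic functions attaining a minimum. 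With $Y=\tfrac12\nabla h$ (K\"ahler gradient) one then has backward completeness with every trajectory limiting on $L=\KK(x)$ and normally hyperbolic linearization there, and the symplectomorphism onto a star-shaped $D\subset T^*L$ taking $Y$ to $E$ is supplied by Nagano's characterization of cotangent bundles rather than a Weinstein-plus-flow argument. Boundedness of $D$ is read off from momentum maps: $\mu:T^*L\to\kk^*$ is proper and linear on fibers, $\psi$ intertwines it with the linear projection $\nu$, and $\mu(D)=\nu(X^*)\subset\nu(X)$ is bounded since $X$ is compact. Your soft averaging cannot substitute for the properness of $h$, which is precisely what does the global dynamical work.
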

Theorem \ref{thm:canonical} describes any symplectic (co)adjoint orbit of a compact Lie algebra as a compactification of a domain
in a cotangent bundle with its Liouville symplectic form.

To any Lie algebra involution on $\gg$, there corresponds a non-compact semisimple Lie algebra $\hh$, which is a real form of the complexification of $\gg$.
If the opposite involution 
acts on the adjoint orbit $X\subset \gg$ with non-empty fixed point set, then to $X$ there corresponds
an adjoint orbit $X^\vee$ of $\hh$ and a holomorphic adjoint orbit $\cO$ of the complexified Lie algebra, such that 
\[X\subset \cO \supset X^\vee.\]
If $\Omega$ denotes the holomorphic KKS form on $\cO$, then the real KKS symplectic forms on  $X$ and $X^\vee$ are $-\Im \Omega$ and $\Re\Omega$, respectively.
The real hyperbolic orbit $(X^\vee,\Re\Omega)$ is canonically symplectomorphic to $(T^*L,d\lambda)$ \cite{ABB,Li,Ma}. Therefore, in view of Theorem \ref{thm:canonical}, it
is natural to ask whether it is possible to produce a transformation of the holomorphic adjoint orbit $\cO$, which takes $(X^*,-\Im\Omega)$ 
to a domain of $(X^\vee,\Re\Omega)$.

Our second result describes such a symplectomorphism by means of a Wick rotation-type map:

\begin{theorem}\label{thm:duality} Let $(X,\w)$ be an adjoint orbit of the compact semisimple Lie algebra $\gg$ endowed with its
$\mathrm{KKS}$ symplectic form. A Lie algebra involution whose opposite involution $\sigma$ acts on $X$ with non-empty fixed point set
determines inclusions of symplectic manifolds
 \[\xymatrix{ 
  & (\cO,\Omega)   &  \\
(X,\w=-\Im \Omega) \ar[ur]^-{}  &   &     (X^\vee,\Re\Omega)   \ar[ul]^-{}     
}
\]
and a real analytic diffeomorphism 
\begin{equation}\label{eq:duality}
 \Psi:(X^*,\w,Y,\sigma)\to (D^\vee\subset X^\vee,\Re\Omega,\Re\Lambda,-\theta),
\end{equation}
where
\begin{itemize}
\item $D^\vee$ is an open bounded domain of $X^\vee$,
\item $\Lambda$ is the complexification of minus the Liouville vector field  $Y$ in Theorem \ref{thm:canonical},
\item $\theta$ is the Cartan involution on the complexification of $\gg$ which fixes $\gg$,
\end{itemize}
and $\Psi$ is induced by the flow  of $\Lambda$  for time $i\tfrac{\pi}{2}$ .

The real analytic diffeomorphism in (\ref{eq:duality}) is $\KK$-equivariant and compatible with the canonical (linear) momentum maps
for the $\KK$-action, where $\KK$ is the connected integration of the fixed point set of the involution on $\gg$. 

\end{theorem}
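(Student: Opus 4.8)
The plan is to realize $\Psi$ as the time-$i\tfrac{\pi}{2}$ map of the \emph{complex} flow of a single holomorphic vector field on $\cO$, and to extract the symplectomorphism property from the way $\Omega$ scales along that flow. The geometric engine is how $X$ and $X^\vee$ sit inside $\cO$. Writing $\cO$ as a $\gg_{\C}$-adjoint orbit, the tangent spaces to $X$ are spanned by brackets with elements of the compact form while those of $X^\vee$ involve $\gg^\vee=\kk\oplus i\pp$; complex-bilinearity of the Killing form then forces $\Omega$ to be purely imaginary along $X$ and purely real along $X^\vee$. Matching with the identities recorded in the introduction, this reads $\Omega|_{X}=-i\,\w$ (so $\Re\Omega|_X=0$) and $\Im\Omega|_{X^\vee}=0$ (so $\Omega|_{X^\vee}=\Re\Omega|_{X^\vee}$). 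Thus $X$ and $X^\vee$ are the imaginary and real loci of $\Omega$, and a rotation by $i\tfrac{\pi}{2}$ in complex time is exactly what should interchange them.

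First I would construct $\Lambda$. Since $\psi$ in Theorem~\ref{thm:canonical} is real analytic and the Euler field $E$ is real analytic, $Y$ is real analytic on $X^*$; as $X^*$ is a maximal totally real submanifold of $\cO$, the field $-Y$ has a unique holomorphic extension $\Lambda$ to a neighborhood of $X^*$ in $\cO$. The Liouville identity $\Lie_Y\w=\w$ then complexifies: $\Lie_{Y^{\C}}\Omega$ and $\Omega$ are holomorphic $2$-forms whose restrictions to $X^*$ coincide (here one uses $\Omega|_{X^*}=-i\,\w$), and a holomorphic tensor is determined by its restriction to a maximal totally real submanifold, so $\Lie_{Y^{\C}}\Omega=\Omega$ and hence $\Lie_{\Lambda}\Omega=-\Omega$.

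Next comes the Wick rotation. From $\Lie_{\Lambda}\Omega=-\Omega$ the holomorphic flow scales $\Omega$ by $e^{-z}$, so $\Psi=\varphi^{\Lambda}_{i\pi/2}$ multiplies $\Omega$ by $e^{-i\pi/2}$; evaluating on $X^*$ with $\Omega|_{X^*}=-i\,\w$, and using that the image lies in the real locus $X^\vee$ where $\Omega=\Re\Omega$, a direct check (with the conventions of the introduction, under which $X$ and $X^\vee$ carry $-\Im\Omega$ and $\Re\Omega$) yields $\Psi^{*}\Re\Omega=\w$. The cleanest way to see simultaneously that the flow is defined up to time $i\tfrac{\pi}{2}$, that it lands in $X^\vee$, and that its image is bounded, is to transport everything through the model of Theorem~\ref{thm:canonical}: on $D\subset T^*L$ the field $-Y$ becomes minus the Euler field, whose complexified time-$z$ flow multiplies a covector by $e^{-z}$; at $z=i\tfrac{\pi}{2}$ this is multiplication by $-i$, carrying the bounded star-shaped $D$ of real covectors to a bounded domain of purely imaginary covectors. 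Under the identification of these imaginary cotangent directions with the transverse directions of $X^\vee\cong(T^*L,d\lambda)$, this is precisely the passage from the compact real form to the non-compact one, so the image is the asserted bounded domain $D^\vee$ and $\Re\Lambda$ is identified with minus the Liouville field there.

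Finally, the landing in $X^\vee$ is best encoded by conjugations. Let $c$ and $c^\vee$ be the antiholomorphic involutions of $\cO$ with fixed loci $X$ and $X^\vee$. As $\Lambda$ is the complexification of a vector field tangent to the $c$-fixed locus, $c\circ\varphi^{\Lambda}_{z}=\varphi^{\Lambda}_{\bar z}\circ c$, and $\Psi(X^*)\subset X^\vee$ becomes the Lie-theoretic identity $c^\vee=\Psi\circ c\circ\Psi^{-1}$, i.e. that conjugating by the time-$i\tfrac{\pi}{2}$ flow turns $\gg=\kk\oplus\pp$ into $\gg^\vee=\kk\oplus i\pp$; this also accounts for $-\theta$ as the target involution. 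Then $K$-equivariance and compatibility with the linear momentum maps are inherited: the data of Theorem~\ref{thm:canonical} are $K$-invariant, so $Y$ and hence $\Lambda$ are $K$-invariant, its flow is $K$-equivariant, and the canonical momentum maps are the restrictions of the linear moment map on $\gg_{\C}$, which the orbit flow preserves. I expect the main obstacle to be exactly the global analytic control in the Wick rotation: proving that the complex-time flow of $\Lambda$ exists on all of $X^*$ up to time $i\tfrac{\pi}{2}$ without leaving $\cO$ and lands precisely on the real form $X^\vee$ rather than merely in its complexification, together with the boundedness of $D^\vee$. The local scaling of $\Omega$ is immediate, but this global statement is where the model of Theorem~\ref{thm:canonical} must be used in an essential way.
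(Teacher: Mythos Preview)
Your outline captures the correct infinitesimal picture: the anti-Liouville identity $\mathcal{L}_\Lambda\Omega=-\Omega$, the resulting scaling $\varphi_z^*\Omega=e^{-z}\Omega$, and the conjugation of the antiholomorphic involutions are exactly the mechanisms the paper uses. The gap is in the step you yourself flag as the obstacle, and your proposed fix for it does not work.

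You want to resolve the global existence of the time-$i\tfrac{\pi}{2}$ flow by transporting through the model $\psi:X^*\to D\subset T^*L$ of Theorem~\ref{thm:canonical}, arguing that in the model the complexified Euler flow is just fiberwise multiplication by $e^{-z}$. The problem is that $\psi$ is only real analytic, so its complexification $\psi^\C$ is a biholomorphism between a neighborhood of $X^*$ in $\cO$ and a neighborhood of $D$ in the complexified cotangent bundle; there is no reason this neighborhood contains the rotated domain $-i\cdot D$. So even though the flow exists trivially in the linear model, you cannot carry it back to $\cO$ for points of $X^*$ far from $L=\KK(x)$. The paper confronts this head-on: it explicitly states that one \emph{cannot} grant existence of the $i\tfrac{\pi}{2}$ trajectories from all of $X^*$, and instead defines $\Psi$ by a three-step conjugation. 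For $y\in X^*$ one first flows by $\Re\Lambda$ (which equals $-Y$ on $X^*$, hence forward complete) for a large real time $t$ until the point enters a small $\KK$-invariant neighborhood $\cB'$ of $\KK(x)$ where the complexified model \emph{does} apply; there one performs the imaginary-time rotation, which lands in $X^\vee$; finally one flows back by $\Re\Lambda$ for time $-t$ along $X^\vee$. Independence of $t$ comes from the semigroup action on $\cB'$. The back-flow step requires two further facts you do not have: that $\Lambda$ is defined on all of $X^\vee$ (this uses the description of its domain as $\cB=\cA\cap(-\theta)(\cA)$ together with the free action of $\sigma$ on $X\setminus X^*$), and that $\Re\Lambda$ is complete on $X^\vee$ (this uses that the projection $\kk\oplus\ss\to\kk$ is proper and relates $\Re\Lambda$ to minus the Euler field). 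Your sketch of the involution-conjugation argument is fine for showing the image lies in $X^\vee$ \emph{once} the map is defined, but the definition itself needs this flow-in/rotate/flow-out construction rather than a direct appeal to the global linear model.
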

Theorem \ref{thm:duality} describes a symplectic correspondence of sorts  between (co)adjoint orbits of a compact Lie algebra
 and hyperbolic (co)adjoint orbits of non-compact real forms of the complexification of the compact Lie algebra. Here, the 
 word \emph{correspondence} should be understood as a bijection between orbits of a pair of different subgroups of a complex Lie group,
 of which there are important examples in the literature with which our construction bears a resemblance; the adjective \emph{symplectic} is added because orbits in correspondence come with a symplectomorphism defined on appropriate domains.  
 \begin{itemize}
  \item The first such instance is the Kostant-Sekiguchi correspondence \cite{Se,V}, which 
 establishes a bijection between  the (finitely many)  nilpotent adjoint  orbits in the real form  $\hh$ with Cartan decomposition $\kk\oplus \ss$, and the nilpotent orbits in the complexification of $\ss$ for the action of the complexification of the maximal compact subgroup $\KK\subset \mathrm{H}$, where $\mathrm{H}$ is the connected integration of $\hh$. Orbits in correspondence 
 lie in the same adjoint of the complexification of $\gg$ and are $\KK$-equivariantly diffeomorphic. In Theorem \ref{thm:duality}, the correspondence is between (some) elliptic orbits and (some) hyperbolic orbits lying in the same orbit of the complexification of $\gg$, and the $\KK$-equivariant symplectomorphism is between domains of the corresponding orbits.
 \item The second example is the Matsuki correspondence \cite{Ma,BL},
 which establishes a bijection between the (finitely many) orbits in $X$ of the action of the complexification of $\KK$ and of $\mathrm{H}$\footnote{The correspondence is more general since it applies to any pair of commuting anti-complex involutions in the complexification of $\gg$. We are assuming that one of them is the Cartan involution so we can establish a relation with Theorem \ref{thm:duality}.}. The correspondence reverts the partial order defined  by the inclusion and orbits in correspondence intersect in a unique $\KK$-orbit. In Theorem \ref{thm:duality}, the open dense subset $X^*$ is the 
 largest orbit of the complexification of $\KK$ and $L=X\cap X^\vee$ is the smallest $\mathrm{H}$-orbit, and thus they are in correspondence. We consider the action of $\mathrm{H}$ in the larger complex orbit $\cO$ --- which induces the action of $\mathrm{H}$ in $X$ --- so that we can realize $X^*$ as a domain of $X^\vee$ in a symplectic and $\KK$-equivariant fashion.
\end{itemize}

The precise meaning of having $\Psi$ in Theorem \ref{thm:duality} induced by the flow of $\Lambda$ is the following:
We cannot grant that the trajectories  starting at all points in $X^*$ exist for time  $i\tfrac{\pi}{2}$. However, they exist for points close
enough to $L$ and we are able to prove that the ensuing map has an analytic continuation to the whole $X^*$. 

Theorem \ref{thm:canonical} generalizes, among others, classical results for projective spaces and quadrics.
If in $\mathfrak{su}(2)$ we consider the involution that conjugates the coefficients of a matrix, then 
Theorem \ref{thm:canonical}  amounts to the following
statement whose origin can be traced back to Archimedes: 
Let $X\subset \R^3$ be a sphere centered at the origin endowed with the Euclidean area form. If we remove the poles, then we obtain 
a subset $X^*$ where we can introduce cylindrical coordinates $\theta,z$. In cylindrical coordinates the Euclidean area form is $d\theta\wedge dz$.

To identify the spherical group of which $X^*$ is an open dense orbit, the sphere, or, rather,  
an adjoint orbit of $\mathfrak{su}(2)$, is identified 
with the complex projective line  mapping a matrix to the eigen-line of the the eigen-value of maximal norm. 
If the equator of the sphere corresponds to 
symmetric matrices, then north and south poles go to the two points of the quadric $\mathcal{Q}_0=\{Z_1^2+Z_2^2=0\}$.
These are the isotropic lines for the standard complex quadratic form in the complex plane. Its symmetry group $\mathrm{SO}(2,\C)\subset \SL(2,\C)$
is the spherical subgroup with which we are concerned. It acts  on the projective line with three orbits: the two points of the quadric and the open orbit that corresponds to 
$X^*$. More generally, for all $n>2$, the action of 
 $\mathrm{SO}(n,\C)$ on $\mathbb{CP}^{n-1}$ has two orbits: the 
quadric  and its complement. Complex conjugation has fixed point set $\mathbb{RP}^{n-1}$. 
The complement of the quadric endowed with the 
Fubini--Study symplectic form is known to be symplectomorphic to a disk bundle of $T^* \mathbb{RP}^{n-1}$ with its Liouville symplectic structure
\cite{BJ,Bir}. This is yet another instance of Theorem \ref{thm:canonical} applied to $\mathfrak{su}(n)$ and 
the involution that conjugates the coefficients of 
a matrix.

Let $X$ be  the product of two spheres and let $X^*$ be the complement of the diagonal. Another classical result in symplectic topology says that 
if $X$ is  endowed with a monotone symplectic form, 
then the anti-diagonal is a Lagrangian 2-sphere and $X^*$ is symplectomorphic to a disk bundle of $T^*S^2$. The product of 
two spheres can be identified with the quadric $\mathcal{Q}_2$ in such a way that the diagonal is mapped to $\mathcal{Q}_1$. More generally,
 if the quadric $\mathcal{Q}_{n-1}$ is endowed with the Fubini--Study form, then the complement of $\mathcal{Q}_{n-2}$ is known to 
 be symplectomorphic to a disk bundle of the Lagrangian $n-1$-sphere obtained as the intersection of $\mathcal{Q}_{n-1}$ with $\mathbb{RP}^{n}$
 \cite{Bir,Au,OU}.  This is another instance of Theorem \ref{thm:canonical} applied to $\so(n,\R)$ and the involution whose associated
 real form is $\so(n-1,1)$.
 
For some orbits, Theorem \ref{thm:canonical} can produce more than one canonical domain. A regular adjoint orbit $X\subset \mathfrak{su}(3)$ is 
identified with the manifold of full flags in $\C^3$. 
Theorem \ref{thm:canonical} applied to the involution that conjugates the coefficients of a matrix produces a canonical domain symplectomorphic to a subset of the cotangent bundle of the manifold of full real 
flags (diffeomorphic to the quotient of $\mathrm{SU}(2)$ by the quaternions). If the eigen-values of $X$ are of the form $-i\lambda,0,i\lambda $, then 
 the involution whose associated real form is $\mathfrak{su}(2,1)$ is in the the hypotheses of 
Theorem \ref{thm:canonical}, and produces a symplectomorphism known to experts in toric symplectic topology: The Gelfand-Zeitlin map has as its only singular
fiber a Lagrangian 3-sphere $S^3$. There is a symplectomorphism to a star-shaped neighborhood of the zero section of $T^*S^3$ defined in the complement of the preimage 
of the two facets of the Gelfand-Zeitlin polytope which do not contain the singular vertex.

The Lagrangians in Theorem \ref{thm:canonical} are well--known to be minimal homogeneous Lagrangians \cite{BG} (see also \cite{My}). We are not aware of a previous 
systematic study of their Weinstein neighborhoods.
The techniques and strategy to prove Theorem \ref{thm:canonical} differ much from those used in \cite{Bir,Au,OU}.
First, Lie theoretic methods are used extensively to describe the 
symplectic geometry of hyperbolic orbits of real and complex semisimple Lie algebras and their relation to Lagrangian fibrations and their affine
geometry \cite{Li}. Second,  
  geometric counterparts of K\"ahler potentials   are employed \cite{Do}; the use of K\"ahler methods, as opposed to projective ones, is important
to have a result that places no integrality requirements on the cohomology class of the coadjoint orbit. Having geometric control on K\"ahler potentials
is fundamental as this translates into geometric control of the associated Liouville vector fields. Third, appropriate Liouville vector fields are 
the key to apply a classical differential geometric characterization of cotangent bundles with their Liouville symplectic form \cite{Na}.

We are not aware of previous instances of Theorem \ref{thm:duality}. In regard to the tools in Theorem \ref{thm:duality}, the use of holomorphic vector fields on K\"ahler manifolds and specifically
on complex (co)adjoint orbits is not new. They are employed in \cite{BBLU} to study the space of K\"ahler structures and the key players there are
holomorphic Hamiltonian vector fields rather than Liouville ones as in Theorem \ref{thm:duality}. In regard to the 
results in Theorem  \ref{thm:duality},  a symplectic variation of the Borel embedding realizing the holomorphic duality between non-compact and compact Hermitian symmetric spaces is discussed in \cite{DL}. This so-called symplectic duality differs much from Theorem \ref{thm:duality} both in the methods and in the result, which is an embedding 
from the non-compact Hermitian symmetric space into its compact dual, which is not a symplectomorphism, but rather relates the KKS symplectic forms by means of a third symplectic form.

There are natural questions to be addressed in relation to Theorem \ref{thm:canonical}. For instance,
at the level of differential topology, it would be important to understand how the domain $D$ is compactified to produce
the coadjoint orbit; it is natural to expect --- at least in some cases --- that the closure of $D\subset T^*L$ be a closed disk bundle and that
there is a Lie group action in the sphere bundle responsible for the identifications. This is the case for projective spaces and quadrics, even 
accounting for the behavior of the symplectic form (cf. \cite{OU}).
At the symplectic level, a relevant question is to describe the shape of $D$ with respect to the cotangent lift of the appropriate bi-invariant metric
in the Lagrangian. This would be the key to have a unified approach to the computation of a bound from below for the Gromov width of coadjoint orbits \cite{FLM} as it would 
reduce the question to estimating embeddings of flat balls in $T^*\R^n$ into $D$. The relation of Theorem \ref{thm:canonical} with 
complete integrable systems should also be addressed. For instance, the compatibility with the Gelfand--Zeitlin system 
should be analyzed in detail (see Example \ref{ex:flags}); to do that, one would not just look at the chain of subalgebras but rather at 
an appropriate chain of commuting involutions. In the other direction, it would be natural to investigate
if some of the known integrable systems on cotangent bundles \cite{My,BJ} can be transplanted
via Theorem \ref{thm:canonical} to integrable systems on coadjoint orbits. It would also be relevant 
to compare Theorem \ref{thm:canonical} with the constructions of large toric charts in \cite{ABHLL} and in \cite{HL}.

The relation of Theorem \ref{thm:duality} with the representation theory of non-compact semisimple Lie algebras should also be addressed.

The organization of this paper is the following. In Section \ref{sec:complexifications}, we discuss how the 
Iwasawa decomposition and anti-complex involutions can be used to describe the geometry of a hyperbolic orbit of a complex semisimple Lie algebra.
Section \ref{sec:LS} starts describing the interaction between the aforementioned hyperbolic orbits, 
holomorphic affine Lagrangian fibrations, and geometric counterparts of K\"ahler potentials for the KKS symplectic structure on an orbit of a compact
Lie algebra. Then, it proceeds to the proof of Theorem \ref{thm:canonical} by combining the previous results with a characterization of (star-shaped domains in)
cotangent bundles described in \cite{Na}. Section \ref{sec:wick-rotation} analyzes the properties of the complexification of the vector field $Y$
in Theorem \ref{thm:canonical} and proceeds to the proof of Theorem \ref{thm:duality}. The most delicate aspect is to control the domain of definition
of the complexified vector field. This is done  by means of geometric methods and its proof is deferred to the Appendix.

We would like to thank R. Vianna for discussions on this matter and the anonymous referee for his/her most valuable comments and suggestions. 

This work was supported by the FAPERJ grants E‐26/202.923/2015, \newline E-
26/010.001249/2016, E‐26/202.908/2018  and the CNPQ grant 304049/2018-2.
 
\section{Complexifications, real forms, and the spherical subgroup}\label{sec:complexifications}

We fix once and for all a compact connected semisimple Lie group $\GG$ whose Lie algebra is $\gg$. For convenience we assume $\GG$ to be simply connected. 
Let $X\subset \gg$ be an adjoint orbit of $\GG$.
It will be convenient to regard $X$ as a real form of a holomorphic adjoint orbit of the complexified Lie algebra $\gg^\C$. 
We denote by $\theta$ the Cartan involution on $\gg^\C$ whose fixed point set is $\gg$. We also fix $\sigma$
an anti-complex involution on $\gg^\C$ that commutes with $\theta$. For simplicity, 
we will assume that its fixed point set is a split real form $\hh$, that is, that $\sigma$ is a Weyl involution.
In Section \ref{ssec:involutions}, we will discuss the case of arbitrary real forms of $\gg^\C$.
We shall use the 
same notation for involutions on the Lie algebra and for their integration on the  complexification $\GG^\C$. For instance, 
the connected integration of $\hh$ will be $\mathrm{H}={(\GG^\C)}^\sigma$. 

The commuting involutions produce a common direct sum decomposition of $\gg^\C$ into $\pm 1$-eigen-spaces
\begin{equation}\label{eq:commuting-eigen-spaces} 
\gg^\C=\gg\oplus i\gg=i\ss\oplus \kk\oplus \ss\oplus i\kk,
\end{equation}
where $\hh=\kk\oplus \ss$ and $\kk$ is the Lie algebra of the maximal compact subgroup 
$\KK\subset \mathrm{H}$, $\KK=\GG\cap \mathrm{H}$. 
 
Multiplication times $i$ identifies $X\subset \gg$ with a $\GG$-orbit of $i\gg\subset \gg^\C$. From now on, we shall 
work with this $\GG$-orbit, which we also denote by $X\subset i\gg$. The reason is that we want to take
advantage of the hyperbolic $\GG^\C$-orbit that contains $X$,  which we shall denote by $\cO$.  

From the Cartan decomposition $\gg^\C=\gg\oplus i\gg$ we pass to an Iwasawa decomposition of $\gg^\C$ by
choosing first $\aa\subset i\gg$ a maximal 
abelian subalgebra. Because $\hh$ is a split real form, we may take $\aa\subset i\ss\subset \hh$. The complexification 
$\aa\oplus i\aa\subset \ss\oplus i\ss$ is a Cartan subalgebra of $\gg^\C$ and, therefore, $X$ intersects $\aa$. 
In other words, the opposite 
of the restriction of a Weyl involution to $\gg$ acts on every adjoint orbit with non-empty fixed point set.
This means that for any adjoint orbit of $\gg$, a Weyl involution on $\gg$  is in the hypotheses 
of Theorem \ref{thm:canonical}.  

Next we fix a root ordering
$\Sigma=\Sigma^+\cup \Sigma^-$ and define $\nn=\sum_{\alpha\in \Sigma^+}\gg_\alpha$ so that
\[\gg^\C=\gg\oplus \aa\oplus \nn\]
is an Iwasawa decomposition of the real semisimple Lie algebra $\gg^\C$. We have exactly one point $x$ in the intersection of $X$
with the positive Weyl chamber of $\aa$:
\[X=\GG(x)\,\,\subset\,\, \cO=\GG^\C(x).\]

We start by recalling how a structure of complex homogeneous space on $X$ is obtained. Because $\GG^\C$ is a complex subgroup
the adjoint orbit $\cO$ is a complex manifold. Let $\mathrm{Z}\subset \GG^\C$ be the centralizer of $x$ and let 
$\mathrm{P}$ be the normalizer of $\mathrm{Z}$. These are complex subgroups. We have a  canonical identification $\cO\cong \GG^\C/\mathrm{Z}$ given by the action of $\GG^\C$ on $x$, and a submersion
\begin{equation}\label{eq:quasiproj-orbit} 
\GG^\C/\mathrm{Z}\cong \cO\to \GG^\C/\mathrm{P}.
 \end{equation}
The base is canonically diffeomorphic to $X$ as the action of $\GG$ on $[e]$ provides a section. This section is not holomorphic. Far from it,  
the anti-holomorphic involution $-\theta$ fixes $\ss$ and therefore fixes the centralizer $\mathrm{Z}$. Thus it descends to an 
anti-holomorphic involution on $\GG/\mathrm{Z}$ whose fixed point set is the section $\GG([e])$.

It is important to regard $X$ not just as a complex homogeneous space, but to remember the whole structure of homogeneous bundle of which
$X$ is naturally a section. Furthermore, the Iwasawa decomposition at the group level provides a linearization of 
this homogeneous bundle (\ref{eq:quasiproj-orbit}).
We state this well-known result as a lemma for further use.
\begin{lemma}\label{lem:Iwasawa} The group Iwasawa decomposition  $\GG^\C=\GG\mathrm{AN}$ induces  a ruling/affine bundle structure on 
the adjoint orbit $\cO\to \cO/\sim$ characterized by having the $\mathrm{N}$-orbit $\mathrm{N}(x)=x+\nn(x)$, $\nn(x)=\sum_{\alpha(x)>0}\gg_\alpha$, as fiber through $x$ 
  and having the action of $\GG^\C$ on $\cO$ preserving the ruling. 
  
The ruling has the following properties:
\begin{enumerate}[(i)]
\item  The identification $\GG^\C/\mathrm{Z}\cong \cO$ is a $\GG^\C$-equivariant biholomorphism from the homogeneous to the affine bundle 
\begin{equation}\label{eq:Iwasawa} \xymatrix{ 
\GG^\C/\mathrm{Z}  \ar[rr]^-{} \ar[d]_-{}  &    &  \cO \ar[d]_-{\Pi}\\
 \GG^\C/\mathrm{P} \ar[rr]^-{}  &   &            \cO/\sim         
}
\end{equation}
\item The biholomorphism intertwines the involution on $\GG^\C/\mathrm{Z}$ induced by the opposite of $\theta$ on $\GG^\C$ 
and the restriction to $\cO$ of the opposite Lie algebra involution $-\theta$. Therefore, 
it identifies the fixed point sets $\GG([e])\subset \GG^\C/\mathrm{Z}$ and
\begin{equation}\label{eq:real-form-1}
\cO^{-\theta}=\cO\cap \ss\oplus i\kk=\GG(x)=X\subset \cO.
 \end{equation}
\end{enumerate} 
\end{lemma}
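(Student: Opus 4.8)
The lemma assembles standard structure theory, so the plan is to describe $\mathrm{Z}$, $\mathrm{P}$ and the fibre through $x$ explicitly in root-theoretic terms and then to globalise by equivariance. First I would record the infinitesimal picture. Writing $\aa\oplus i\aa$ for the Cartan subalgebra and decomposing $\gg^\C$ into root spaces, the centraliser of $x$ has Lie algebra $\zz=\aa\oplus i\aa\oplus\sum_{\alpha(x)=0}\gg_\alpha$, so $\mathrm{Z}$ is the connected reductive (Levi) subgroup determined by the wall on which $x$ sits; the roots with $\alpha(x)\neq0$ split into those with $\alpha(x)>0$ and those with $\alpha(x)<0$, and $\pp:=\zz\oplus\nn(x)$ with $\nn(x)=\sum_{\alpha(x)>0}\gg_\alpha$ is the parabolic subalgebra with nilradical $\nn(x)$. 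Here $\mathrm{P}$ is the parabolic subgroup with Levi factor $\mathrm{Z}$ and unipotent radical $\mathrm{U}=\exp\nn(x)$. The key combinatorial observation is that $\nn(x)$ is an ideal of $\nn$ stable under $\ad\nn$: if $\alpha(x)>0$ and $\beta\in\Sigma^+$ then $(\alpha+\beta)(x)>0$. Since $\mathrm{N}=(\mathrm{N}\cap\mathrm{Z})\,\mathrm{U}$ one gets $\mathrm{P}=\mathrm{Z}\,\mathrm{U}=\mathrm{Z}\,\mathrm{N}$, so $\mathrm{Z}\subset\mathrm{P}$ and \eqref{eq:quasiproj-orbit} is the projection $\GG^\C/\mathrm{Z}\to\GG^\C/\mathrm{P}$.

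Next I would identify the fibre through $x$. Because $\mathrm{N}\cap\mathrm{Z}=\exp(\nn\cap\zz)$ centralises $x$, the orbit map $\mathrm{N}\to\cO$, $n\mapsto\Ad_n x$, factors through $\mathrm{N}/(\mathrm{N}\cap\mathrm{Z})$, and $\mathrm{N}(x)=\mathrm{U}(x)=\mathrm{P}(x)$ is exactly the fibre of $\Pi$ over $\Pi(x)$. For $\xi\in\nn$ one has $\Ad_{\exp\xi}x-x=\sum_{k\ge1}\tfrac1{k!}\ad_\xi^k x\in\nn(x)$ by the stability just noted, so $\mathrm{N}(x)\subset x+\nn(x)$; conversely the differential of the orbit map at the identity is $-\ad_x$, an isomorphism $\nn(x)\to\nn(x)$ with kernel $\nn\cap\zz$ on $\nn$, so by the standard fact that a unipotent orbit is a closed affine subspace the map $\nn(x)\to x+\nn(x)$, $\xi\mapsto\Ad_{\exp\xi}x$, is an affine isomorphism. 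Globalising by equivariance then yields \eqref{eq:Iwasawa}: the orbit map $\GG^\C/\mathrm{Z}\to\cO$ is a $\GG^\C$-equivariant biholomorphism, every fibre of $\Pi$ is a $\GG^\C$-translate of $x+\nn(x)$ and hence an affine space, and $\GG^\C$ carries fibres to fibres by affine maps (the Levi $\mathrm{Z}$ acting linearly through $\Ad$ and $\mathrm{U}$ by translations); this is the asserted ruling.

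For (ii) I would use that $\theta$ integrates to an antiholomorphic automorphism of $\GG^\C$ fixing $\GG$, and that $-\theta$ restricts to the identity on the $(-1)$-eigenspace $i\gg=\ss\oplus i\kk$ of $\theta$ (cf. \eqref{eq:real-form-1}), which contains $x$, so $-\theta$ fixes $x$. Because $\theta(x)=-x$ has the same centraliser as $x$, the automorphism $\theta$ preserves $\mathrm{Z}$ and descends to the antiholomorphic involution $[g]\mapsto[\theta(g)]$ on $\GG^\C/\mathrm{Z}$; the computation $-\theta(\Ad_g x)=\Ad_{\theta(g)}(-\theta x)=\Ad_{\theta(g)}x$ shows that under the biholomorphism this is precisely $v\mapsto-\theta(v)$ on $\cO$. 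Its fixed-point set is $\cO\cap(\ss\oplus i\kk)$, which contains $X=\GG(x)$ since $x\in\ss\subset i\gg$ and $\Ad_\GG$ preserves $i\gg$; equality with $X$ follows from the rigidity statement that two elements of the compact real form which are $\GG^\C$-conjugate are already $\GG$-conjugate, i.e.\ that the real form cut out by the conjugation $-\theta$ is the single orbit $\GG([e])$.

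The argument is routine once these ingredients are in place; the two points deserving care are the \emph{affine} (not merely topological) nature of the bundle together with $\GG^\C$-equivariance of its ruling, for which one must verify that the transition maps are affine rather than only continuous, and the precise identification of the fixed-point set in (ii), which rests on the $\GG^\C$-versus-$\GG$ conjugacy rigidity and not on a dimension count. Both are standard in the structure theory of complex semisimple groups and their compact real forms.
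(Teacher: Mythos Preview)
The paper does not prove this lemma; it is introduced with the sentence ``We state this well-known result as a lemma for further use'' and no argument follows. Your proposal is correct and supplies exactly the structure-theoretic verification the paper omits: the root-space description of $\mathrm{Z}\subset\mathrm{P}$, the identification $\mathrm{N}(x)=x+\nn(x)$ via the unipotent-orbit argument (nilpotency of $\ad_\xi$ on $\nn(x)$ together with the isomorphism $-\ad_x:\nn(x)\to\nn(x)$), the globalisation to an affine bundle by $\GG^\C$-equivariance, and the intertwining in (ii) via the computation $-\theta(\Ad_g x)=\Ad_{\theta(g)}x$.

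One small wording issue: in the last step of (ii) you invoke conjugacy rigidity for ``elements of the compact real form'', but $x\in i\gg$, not $\gg$. The fix is immediate: multiply by $i$ and use that two elements of the compact Lie algebra $\gg$ which are $\GG^\C$-conjugate are already $\GG$-conjugate; this transfers to $i\gg$ by linearity and gives $\cO\cap i\gg=\GG(x)$ as claimed.
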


We now discuss symplectic structures. The orbit $\cO$ is endowed with the (holomorphic) $\mathrm{KKS}$ symplectic form $\Omega$. We
are concerned with two submanifolds
of $\cO$ whose symplectic geometry can be read  using involutions:
One is $X\subset \cO$.  The identification of the adjoint orbit
in $\gg$ with the $\GG$-orbit $X\subset i\gg$ takes the real $\mathrm{KKS}$ symplectic form to $-\Im\Omega$. 
The other one is the orbit of $x$ by the complex subgroup $\KK^\C$, which is the fixed point set of the
holomorphic involution $\theta\sigma$. Because $\GG$ is simply connected,  the subgroup $\KK^\C$ is connected \cite[Theorem 8.1]{St}. The group $\KK^\C$
is a spherical subgroup. It acts on $X$ with a finite number of orbits \cite[Corollary 4.3]{Sp}. The open and dense orbit 
is the subset $X^*\subset X$ that appears in the statement of Theorem \ref{thm:canonical}.
 
\begin{proposition}\label{pro:fixedpoints-sections} 
The fixed point sets of the opposite involutions $-\theta$ and $-\theta\sigma$ on $\cO$ have the following properties:  
 \begin{enumerate}[(i)]
  \item The fixed point set of $-\theta$ --- which is the $\GG$-orbit $X$ --- is  Lagrangian with respect to $\Re \Omega$ and symplectic with respect to $\Im\Omega$.
  \item The fixed point set of $-\theta\sigma$ is the $\KK^\C$-orbit $\KK^\C(x)$. It is Lagrangian with respect to $\Omega$ and 
  it is a section of the Iwasawa projection
  whose image is the open dense orbit $X^*\subset X$. 
 \end{enumerate}
\end{proposition}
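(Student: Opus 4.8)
The plan is to deduce every assertion from the transformation law of the holomorphic form $\Omega$ under the two involutions, supplemented by one structural fact about the action of $\theta\sigma$ on root spaces. Both $-\theta$ and $-\theta\sigma$ fix $x$ and preserve $\cO$, so I would first record their differentials on $T_\xi\cO=[\gg^\C,\xi]$ using orbit equivariance, $\tau_*[u,\xi]=[\tau u,\tau\xi]$ for the genuine automorphisms $\tau=\theta,\theta\sigma$, and then carrying the overall sign. Feeding this into the Kostant--Kirillov formula $\Omega_\xi([u,\xi],[v,\xi])=B(\xi,[u,v])$ (in the appropriate convention, $B$ the Killing form) and using $B(\theta\sigma a,\theta\sigma b)=B(a,b)$ together with $B(\theta a,\theta b)=\overline{B(a,b)}$ for the anti-linear $\theta$, a one-line computation yields
\[(-\theta)^{*}\Omega=-\overline{\Omega},\qquad (-\theta\sigma)^{*}\Omega=-\Omega.\]
Since tangent vectors to a fixed-point set are invariant, evaluating these on the respective fixed loci immediately forces $\Re\Omega$ to vanish on $\mathrm{Fix}(-\theta)=X$ and $\Omega$ to vanish on $\mathrm{Fix}(-\theta\sigma)$.

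For item (i) it then remains to see that $\Im\Omega$ is nondegenerate on $X$. Here I would use that $X$, being the fixed locus of the anti-holomorphic involution $-\theta$, is totally real of real dimension $\dim_\C\cO$; given $\Re\Omega|_X=0$, the relation $\Omega(v,Jw)=i\,\Omega(v,w)$ for the holomorphic form ($J$ the complex structure) together with nondegeneracy of $\Omega$ shows in one line that $\Im\Omega|_X$ is nondegenerate. Thus $X$ is Lagrangian for $\Re\Omega$ and symplectic for $\Im\Omega$. For the Lagrangian claim in (ii), $-\theta\sigma$ is holomorphic, so $\mathrm{Fix}(-\theta\sigma)$ is a complex submanifold whose tangent space at $x$ is the $(+1)$-eigenspace of $-\theta\sigma$ on $T_x\cO$; since $(-\theta\sigma)^{*}\Omega=-\Omega$, the two eigenspaces are $\Omega$-isotropic and paired nondegenerately by $\Omega$, hence each is Lagrangian of complex dimension $\tfrac12\dim_\C\cO$.

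The remaining content is the identification of $\mathrm{Fix}(-\theta\sigma)$ with the orbit $\KK^\C(x)$ and the section property, and here the key input is that, because $\sigma$ is a Weyl involution, $\tt=\aa\oplus i\aa$ is a Cartan subalgebra on which $\theta\sigma$ acts as $-\mathrm{id}$, whence $\theta\sigma(\gg_\alpha)=\gg_{-\alpha}$ for every root. From this I would compute $[\kk^\C,x]=\bigoplus_{\alpha(x)>0}\C\,(e_\alpha-\theta\sigma e_\alpha)$ for root vectors $e_\alpha\in\gg_\alpha$ and, pairing each $\alpha$ with $\alpha(x)>0$ against $-\alpha$, read off the two transversality facts $\nn(x)\cap\ss^\C=0$ and $T_x\cO=\nn(x)\oplus[\kk^\C,x]$. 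The first gives $\KK^\C(x)\cap \mathrm{N}(x)=\{x\}$ (any $y=x+n\in\ss^\C$ with $n\in\nn(x)$ has $\theta\sigma n$ in the opposite nilpotent, so $n\in\nn(x)\cap\ss^\C=0$); translating by $\KK^\C$ and using $\GG^\C$-equivariance of the Iwasawa projection $\Pi$ upgrades this to injectivity of $\Pi|_{\KK^\C(x)}$ over every fibre it meets. The second shows $d\Pi$ maps $[\kk^\C,x]$ isomorphically onto the base, so $\Pi|_{\KK^\C(x)}$ is an open embedding whose image $\KK^\C\cdot[x]$ is the open—hence, by irreducibility of $\cO/\sim\,\cong X$, the dense—$\KK^\C$-orbit $X^*$.

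Finally, to see that $\mathrm{Fix}(-\theta\sigma)=\cO\cap\ss^\C$ is exactly $\KK^\C(x)$ I would invoke the polar (KAK) decomposition for the complex symmetric pair $(\gg^\C,\kk^\C)$: every (semisimple) element of $\ss^\C$ is $\KK^\C$-conjugate into the Cartan subspace $\tt\subset\ss^\C$, and $\cO\cap\tt=W\!\cdot x$ lies in a single $\KK^\C$-orbit because the Weyl group is realized inside the maximal compact $\KK$. The equality of tangent spaces obtained above shows $\KK^\C(x)$ is open in $\mathrm{Fix}(-\theta\sigma)$, and the polar decomposition shows it is all of it. I expect the main obstacle to be precisely this global step—controlling how $\theta\sigma$ interchanges $\nn(x)$ with the opposite nilpotent and invoking the symmetric-space polar decomposition—whereas the two symplectic statements follow formally once the transformation law of $\Omega$ is in hand.
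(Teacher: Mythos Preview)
Your argument is correct and complete, but for item (ii) it follows a genuinely different route from the paper's.

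For the section property, the paper does not compute tangent spaces at $x$ and then translate by $\KK^\C$. Instead it observes that, since $\theta\sigma|_\aa=-\mathrm{id}$, the involution $-\theta\sigma$ \emph{swaps} the Iwasawa ruling with the opposite Iwasawa ruling on all of $\cO$. This immediately forces the tangent space of the full fixed locus $\cO^{-\theta\sigma}$ to be transverse to every Iwasawa fibre, and a short affine argument (two fixed points on a fibre would give a fixed line, contradicting transversality) shows the intersection with each fibre is a single point. This is more conceptual than your root-space calculation and has the practical advantage that it ports unchanged to the non-split case treated later in Section~\ref{ssec:involutions}, where one must work with $\nn(\ss)$ rather than all of $\nn$.

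For the global identification $\cO^{-\theta\sigma}=\KK^\C(x)$, the paper does not use the polar/KAK decomposition for the complex symmetric pair $(\gg^\C,\kk^\C)$. Instead it first shows (citing \cite{DKV} for the equality $[\kk,x]\oplus\overline{\nn}^\sigma_{(x)}\oplus\aa\oplus\nn^\sigma=\hh$) that $\KK^\C(x)$ projects onto $X^*$, hence is open in the fixed locus; then it invokes Birkes' theorem \cite{Bi} that $\KK^\C(x)$ is closed, so it is a full connected component; and finally it rules out any other component by the already-established section property, since such a component would also project onto an open set meeting $X^*$. Your KAK argument is more direct and self-contained, but it relies on the (standard, maximal-rank) fact that the little Weyl group $N_{\KK^\C}(\tt)/Z_{\KK^\C}(\tt)$ equals the full Weyl group $W(\gg^\C,\tt)$; you should make that step explicit rather than asserting ``the Weyl group is realized inside the maximal compact $\KK$''.
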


\begin{proof}

The opposite involution $-\theta$ on $\gg^\C$ is an anti-conjugate Lie algebra morphism. This means that
it acts  on the holomorphic Poisson manifold $(\gg^\C,\pi_\lin)$ taking orbits to orbits, but transforming the holomorphic $\mathrm{KKS}$ form into its
opposite conjugate. Because $-\theta$ restricts to $\cO$ to a biholomorphism, it sends $\Omega$ to $-\overline{\Omega}$. Thus, its fixed
point set --- which by (\ref{eq:real-form-1}) is $X$ ---  is  Lagrangian with respect to $\Re \Omega$ and symplectic with respect to $\Im\Omega$.

To prove that the fixed point set 
$\cO^{-\theta\sigma}=\cO\cap \ss\oplus i\ss$ is a section of the Iwasawa ruling we need to introduce the opposite Iwasawa ruling.
Let $\overline{\nn}$ denote the sum of the negative root spaces and let $\overline{\nn}(x)$ be the sum of those subspaces corresponding to negative 
roots not vanishing on $x$. Spreading  $x+\overline{\nn}(x)$ via the $\GG^\C$-action (or just the $\GG$-action) produces the opposite
Iwasawa ruling of $\cO$. Because $\overline{\nn}\cap \nn=\{0\}$, the fibers of both rulings intersect at most in one point. 
The restriction of $\theta\sigma$ to $\aa\subset \ss$ is minus the identity. Therefore, it exchanges $\nn(x)$ and $\overline{\nn}(x)$. Hence,
the restriction of $-\theta\sigma$ to $\cO$ exchanges the Iwasawa and the opposite Iwasawa rulings.  The consequence is that the tangent
space at a point in $\cO^{-\theta\sigma}$ must have trivial intersection with the tangent spaces of the Iwasawa and opposite Iwasawa fibers through the point. Also,
if $\cO^{-\theta\sigma}$
intersects an Iwasawa fiber in two points, then because $-\theta\sigma$ is a linear transformation
the affine line through both points would be contained in   $\cO^{-\theta\sigma}$, which contradicts the previous statement on the tangent space of 
the fixed point set. Thus, $\cO^{-\theta\sigma}$ is a section of the Iwasawa projection.

Because $\KK^\C$ is the fixed point set of the involution $\theta\sigma$ on $\GG^\C$, it follows that
$\KK^\C(x)\subset \O^{-\theta\sigma}$.
  Let us assume for the moment that $\KK^\C(x)$ projects onto $X^*$. 
 The  opposite involution $-\theta\sigma$ on $\gg^\C$ is an anti-Lie algebra morphism. 
Because it acts on $\cO$ and fixes $x$, its  fixed point set on $\cO$ is Lagrangian 
with respect to $\Omega$. Thus, its dimension equals the dimension of $X$, and, therefore, $\KK^\C(x)$ must be an open subset of it.
By \cite[Corollary 5.3]{Bi} $\KK^\C(x)$ is a closed orbit which implies that it is a connected component of the fixed point set. 
 Assume that there exists another connected component. It is a section of the Iwasawa projection 
over its image, which must necessarily have the dimension of $X$. Thus, its intersection with $X^*$ is non-empty, which contradicts
that $\cO^{-\theta\sigma}$ is a section of the Iwasawa projection. 

It remains to show that the image of the orbit $\KK^\C(x)$ by the Iwasawa projection is $X^*$. For that
it is enough to check that the dimension of  
the tangent space of the $\KK^\C$-orbit of $x\cong [e]$ in $X\cong \GG^\C/\mathrm{P}$ is the dimension of $X$. 
The  Lie algebra of $\mathrm{P}$ is
\[\mathfrak{p}=\overline{\nn}_{(x)}\oplus i\aa\oplus \aa\oplus \nn,\] 
where $\overline{\nn}_{(x)}$ denotes the sum of root spaces corresponding to negative roots vanishing on $x$.
The kernel of the differential of the Iwasawa projection at $x$ is $\nn$, which is contained in $\pp$. Thus, it suffices to show the equality:
\[[\kk\oplus i\kk,x]\oplus \pp=\gg^\C.\]
Because  $\sigma$ fixes $x$ both summands are invariant by $\sigma$. Therefore, we can equivalently show the equality of real forms
\begin{equation}\label{eq:equality-of-real-forms}
 [\kk,x]\oplus \overline{\nn}^\sigma_{(x)}\oplus \aa\oplus \nn^\sigma=\hh,
\end{equation}
which follows, for example, from \cite[sections 2 and 3]{DKV}.
\end{proof}

\section{LS submanifolds, proper K\"ahler potentials, and Euler vector fields}\label{sec:LS}

The formula for the $\mathrm{KKS}$ symplectic form at $x\in\cO$ is $\Omega_x([u,x],[v,x])=\langle x, [u,v]\rangle$,
where $\langle\cdot,\cdot \rangle$ is the (complex) Killing form on $\gg^\C$.  A similar formula holds at any point in $\cO$ due to the 
invariance of both the bracket and the Killing form by the adjoint action. Therefore, the fibers of the Iwasawa projection 
are Lagrangian submanifolds of $(\cO,\Omega)$.

A fibration with Lagrangian 
fibers has an infinitesimal action of the cotangent bundle of its base on the vertical tangent bundle (see \cite{D} for the smooth case 
and \cite[section 1]{Li} for the holomorphic one). In our case,
this infinitesimal action integrates into a fiberwise holomorphic group action   
\begin{equation}\label{eq:affine-action}
T^{*1,0}(\cO/\sim) \times_{\cO/\sim}\cO\to \cO
 \end{equation}
which turns $\cO\to \cO/\sim$ into a holomorphic affine Lagrangian bundle  with a Hamiltonian action of $\GG^\C$ \cite[Theorem 3.11]{Li}.
Affine Lagrangian bundles are obtained by gluing together pieces of cotangent bundles with their  Liouville symplectic form.
In our setting, the spherical subgroup
provides one such (large!) piece.

We denote by $\cA$ the inverse image of $X^*\subset X\cong \cO/\sim$ by the Iwasawa projection. Equivalently,
$\cA\subset \cO$ is the collection of $\KK^\C$-orbits of points in the Iwasawa fiber
$x+\nn(x)$. 

\begin{proposition}\label{pro:hol-ctg} 
There is a canonical 
extension of the inclusion of $\KK^\C(x)$ in $\cA$ to an isomorphism of affine Lagrangian bundles
\begin{equation}\label{eq:holomorphic-ctg-bundle}
 \xymatrix{ 
(T^{*1,0}\KK^\C(x),d\lambda)\ar[d]_-{} \ar[rr]^-{\chi}   &    & (\cA,\Omega) \ar[d]_-{\Pi} \\
  \KK^\C(x) \ar[rr]^-{\Pi} &   &  \cA/\sim                  
}
 \end{equation}
  defined as follows: 
 At a point $y\in \KK^\C(x)$ the Killing form and the biholomorphism $\Pi:\KK^\C(x)\to \cA/\sim$ identify a covector at 
   $y\in \KK^\C(x)$ with a unique vector 
in the tangent space to the fiber through $y$, which we add to $y$.  
\end{proposition}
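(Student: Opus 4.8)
The plan is to use that, by \cite[Theorem 3.11]{Li} (cf.\ the smooth discussion in \cite{D}), the restriction of the Iwasawa projection $\Pi\colon\cA\to\cA/\sim$ is a holomorphic affine Lagrangian bundle: the fiberwise action \eqref{eq:affine-action} exhibits $\cA$ as a torsor over the holomorphic cotangent bundle $T^{*1,0}(\cA/\sim)$, and the infinitesimal action identifies the vertical bundle $\ker d\Pi$ with $\Pi^*T^{*1,0}(\cA/\sim)$ via $v\mapsto(\iota_v\Omega)|_{\ker d\Pi}$, an identification that makes sense precisely because the fibers are Lagrangian. The general principle I would invoke is that a \emph{holomorphic Lagrangian section} of such a torsor trivialises it as a genuine cotangent bundle, carrying the Liouville form to $\Omega$. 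Here the section is $\KK^\C(x)\hookrightarrow\cA$: it is holomorphic, being the $\KK^\C$-orbit of $x$ and hence a complex submanifold that maps biholomorphically onto $\cA/\sim$ under $\Pi$, and it is Lagrangian --- both facts supplied by Proposition \ref{pro:fixedpoints-sections}(ii).

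Concretely, I would define $\chi$ by applying the torsor action to this section: writing $s\colon\cA/\sim\to\cA$ for the inverse of $\Pi|_{\KK^\C(x)}$, set $\chi(\alpha)=\alpha\cdot s(b)$ for $\alpha\in T^{*1,0}_b(\cA/\sim)$. Transporting covectors along the biholomorphism $\Pi\colon\KK^\C(x)\to\cA/\sim$ rewrites this as the recipe in the statement. At a point $y=s(b)$, the vertical space $\ker d\Pi_y$ and the tangent space $T_y\KK^\C(x)$ to the section are complementary in $T_y\cO$; since both the fibers and the section are Lagrangian, $\Omega$ restricts to a nondegenerate pairing between them, and at the base point $x$ this pairing is exactly $\Omega_x([u,x],[v,x])=\langle x,[u,v]\rangle$, so the Killing form realises the resulting isomorphism $\ker d\Pi_y\cong T_y^*\KK^\C(x)$. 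A covector $\xi$ at $y$ is thereby identified with a unique vertical vector $w$, and $\chi(\xi)=y+w$ is the corresponding point of the affine fiber $y+\nn(y)\subset\gg^\C$. As the torsor action is free and transitive on each fiber, $\chi$ is a fiberwise-affine biholomorphism covering $\Pi$; and $\xi=0$ yields $w=0$, so $\chi$ restricts on the zero section to the inclusion $\KK^\C(x)\hookrightarrow\cA$.

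What remains, and what I expect to be the main obstacle, is the symplectic statement $\chi^*\Omega=d\lambda$, where the Lagrangian hypothesis on the section is indispensable. Both forms are closed; they agree on each cotangent fiber because the torsor action was built from $\Omega$ and the fiberwise pairing is the one defining the tautological form; and both vanish on the zero section --- for $d\lambda$ tautologically, and for $\chi^*\Omega$ because $\chi$ sends the zero section to the Lagrangian submanifold $\KK^\C(x)$. The genuinely delicate point is the vanishing, away from the zero section, of the mixed component pairing a vertical direction against a base direction; this does not follow from the fiberwise data alone. Here I would argue that the fibers are the \emph{flat} complex affine spaces $y+\nn(y)\subset\gg^\C$, so that the identification $\ker d\Pi\cong\Pi^*T^{*1,0}(\cA/\sim)$ is constant along each fiber, whereupon the standard computation for affine Lagrangian bundles in \cite[Section 1]{Li} applies verbatim to yield $\chi^*\Omega=d\lambda$. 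This simultaneously shows that $\chi$ intertwines the affine actions and the symplectic forms, i.e.\ is an isomorphism of affine Lagrangian bundles, completing the proof.
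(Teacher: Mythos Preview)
Your proposal is correct and follows essentially the same route as the paper: both invoke \cite[Theorem 3.11]{Li} for the affine Lagrangian bundle structure on $\Pi$, use Proposition~\ref{pro:fixedpoints-sections}(ii) to identify $\KK^\C(x)$ as a holomorphic Lagrangian section, and then apply the general fact (\cite[Proposition 1.6 / Section 1]{Li}) that the torsor action along such a section yields an isomorphism of affine Lagrangian bundles with the cotangent bundle.

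The one point the paper treats more carefully than you do is the identification of the \emph{abstract} torsor action (built from $\Omega$) with the \emph{concrete} recipe in the statement (Killing-form duality followed by addition in $\gg^\C$). You argue this in one line from $\Omega_x([u,x],[v,x])=\langle x,[u,v]\rangle$; the paper instead unwinds the symplectic-induction construction of \cite[Sections 2.4--2.5, Theorem 3.11]{Li} to show that the momentum map carries the canonical torsor action on $T^*\GG^\C$ to addition in $\gg^\C$, with the duality given precisely by (the real part of) the Killing form between $\nn(x)$ and $\{v+\theta v:v\in\nn(x)\}$. Your direct verification of $\chi^*\Omega=d\lambda$ by checking fiber, section and mixed components is a legitimate alternative to citing \cite[Proposition 1.6]{Li}, but in either case the substantive content is the same.
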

\begin{proof} 
According to  \cite[Theorem 3.11]{Li}  $\Pi:(\cO,\Omega)\to \cO/\sim$ is an affine Lagrangian bundle. Let us assume 
that the affine structure on fibers and the action of $T^{*1,0}\cO$ on the vertical bundle
is the natural one coming from the embedding $\cO\subset \gg^\C$, by which we mean the following:
 The affine structure is given by the Iwasawa ruling 
described in Lemma \ref{lem:Iwasawa}.
The action of a  cotangent vector at a point on $\cO/\sim$ is by addition of the vector tangent to its Iwasawa fiber with which it is in duality
with respect to the Killing  form. This duality at $x$ uses that the complex vector space $\nn(x)$ is isotropic, the identification of  
the cotangent fiber of $T^{1,0}\cO/\sim$ at $x$ with
$\{v+\theta v,|\,v\in \nn(x)\}$, and the duality (over the reals) between 
the latter space and $\nn(x)$ established by the real part of the Killing form --- which can be deduced from the non-degeneracy of 
$\Re\langle \cdot,\theta\cdot\rangle$. 

 By item (ii) in Proposition \ref{pro:fixedpoints-sections}, $\KK^\C(x)$ is a Lagrangian section.
Therefore, $\chi$ as described in the statement 
is exactly the action map (\ref{eq:affine-action}) applied to the Lagrangian section $\KK^\C(x)$ of the affine Lagrangian bundle $(\cO,\Omega)$. Hence, 
it defines an isomorphism of affine Lagrangian bundles \cite[Proposition 1.6]{Li}.

The affine bundle structure on $\Pi:(\cO,\Omega)\to \cO/\sim$ and action of $T^{*1,0}\cO$   
is described in \cite{Li} as follows. First, one
builds an affine Lagrangian bundle $(E_x,\Omega_x)$ by symplectic induction \cite[Section 2.4]{Li}. 
This means Hamiltonian reduction of $(T^*\GG^\C,d\lambda)$ at $x$ with respect
to the action of a standard parabolic subgroup $\mathrm{P}$. The affine Lagrangian bundle structure on $(T^*\GG^{\C},d\lambda)$ is the natural one:
A covector at a point in $\GG^\C$ acts on the fiber by addition. This affine Lagrangian bundle structure descends to the quotient $(E_x,\Omega_x)$. 
Second, the momentum map $E_x\to {\gg^\C}^*$ restricts to an affine map on fibers and takes the action of $T^{*1,0}\cO$ on fibers to addition of 
covectors \cite[section 2.5]{Li}. As we transfer the structure from cotangent to tangent bundle,
we have to replace addition of a covector on the base by 
addition of the only vector tangent to the affine fiber furnished by the Killing form. Third, if we choose 
$\mathrm{P}$ to be the parabolic subgroup  with Lie algebra $\overline{\nn}_{(x)}\oplus i\aa\oplus \aa\oplus \nn$,
 then the momentum
map is a symplectomorphism $(E_x,\Omega_x)\to (\cO,\Omega)\subset \gg$ which takes affine fibers to Iwasawa fibers \cite[Theorem 3.11]{Li}. Therefore
the affine Lagrangian bundle structure on $(\cO,\Omega)$ is the natural one coming from the embedding in $\gg^\C$.
\end{proof}

In order to ease the notation, 
from now on we will use the identification of $(\cA,\Omega)\to \cA/\sim$ with the cotangent bundle of $\KK^\C(x)$ without writing $\chi$.
Thus, we may now regard $X^*$ as a section of the cotangent bundle that is Lagrangian and symplectic with respect to the real and imaginary parts of
the Liouville symplectic form, respectively. This is an $\mathrm{LS}$ submanifold in the terminology of \cite[Section 2]{Do}. Such submanifolds were introduced 
as geometric counterparts of K\"ahler potentials.  

\begin{proposition}\label{pro:LS} The symplectic form $-\Im\Omega$ on $X^*$ has a K\"ahler potential
\begin{equation}\label{eq:Kahler-potential}
-\Im\Omega=i\partial\bar{\partial}h
\end{equation}
normalized by the condition that it vanishes on $x\in X^*$. 
\end{proposition}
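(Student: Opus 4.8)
The plan is to read the potential off from the realisation of $X^*$ as a Lagrangian section of a holomorphic cotangent bundle provided by Proposition~\ref{pro:hol-ctg}, and then to solve $-\Im\Omega=i\partial\bar\partial h$ by a $\partial$-primitive argument on that section. First I would transport everything to $L:=\KK^\C(x)$. Under the isomorphism $\chi$ of Proposition~\ref{pro:hol-ctg} the holomorphic form $\Omega$ becomes $d\lambda$, the Lagrangian section $\KK^\C(x)$ becomes the zero section, and the canonical section over $X^*$, namely $X^*\subset X=\cO^{-\theta}$, becomes the image of a section $s$ given by a smooth $(1,0)$-form $\alpha$ on $L$, with $s^*(d\lambda)=d\alpha$. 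Because $\Pi$ is holomorphic, the identifications $X^*\cong\cO/\!\sim\,\cong L$ are biholomorphic, so computing $\partial\bar\partial$ on $X^*$ is the same as computing it on $L$; under this identification $-\Im\Omega|_{X^*}=-\Im\,d\alpha$.

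Next I would extract the bidegree information forced by the geometry. By Proposition~\ref{pro:fixedpoints-sections}(i) the orbit $X$, and hence $X^*$, is Lagrangian for $\Re\Omega$, so $\Re\,d\alpha=d(\Re\alpha)=0$. Imposing $d\alpha+\overline{d\alpha}=0$ and separating types gives $\partial\alpha=0$ and $\bar\partial\alpha+\overline{\bar\partial\alpha}=0$; that is, $\alpha$ is $\partial$-closed and $\bar\partial\alpha$ is an anti-self-conjugate $(1,1)$-form. Hence $d\alpha=\bar\partial\alpha$ is purely imaginary and $-\Im\Omega|_{X^*}=i\,\bar\partial\alpha$ is a real, $d$-closed $(1,1)$-form. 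This is precisely the statement that $X^*$ is an $\mathrm{LS}$ submanifold in the sense of \cite[Section~2]{Do}.

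Then I would produce the potential. Finding a real $h$ with $-\Im\Omega|_{X^*}=i\partial\bar\partial h$ is equivalent to finding $h$ with $\bar\partial(\alpha+\partial h)=0$, i.e. to correcting $\alpha$ by $\partial h$ so that it becomes holomorphic; since $\partial\alpha=0$ this can always be done locally, and it is exactly the passage in \cite[Section~2]{Do} from an $\mathrm{LS}$ submanifold to its K\"ahler potential. Indeed, if $\alpha=\partial h$ then $-\Im\Omega|_{X^*}=i\,\bar\partial\partial h=i\partial\bar\partial(-h)$, so the statement follows after replacing $h$ by $-h$ and subtracting the constant $h(x)$ to achieve the normalisation at $x$.

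The hard part is the global existence of a single real $h$ on all of $X^*$: locally $\partial$-closed $(1,0)$-forms are $\partial$-exact, but a global real potential exists only when the Bott--Chern class of the closed real $(1,1)$-form $-\Im\Omega|_{X^*}$ vanishes. This cannot be reduced to the vanishing of a de Rham class: already for $\mathfrak{su}(2)$ the set $X^*$ is the cylinder (the sphere minus its poles), which has $H^1\neq 0$, yet the potential exists. I expect to settle the global step either by invoking the $\mathrm{LS}$-submanifold/K\"ahler-potential correspondence of \cite{Do} directly, or --- in a way that also delivers the geometric control on the associated Liouville vector field needed in the proof of Theorem~\ref{thm:canonical} --- by exhibiting $h$ explicitly as the restriction to $X^*$ of a global K\"ahler potential for $(\cO,\Omega)$ built from the embedding $\cO\subset\gg^\C$ and the Killing form.
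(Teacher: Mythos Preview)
Your local analysis is essentially the paper's: transport to $L=\KK^\C(x)$ via $\chi$, realise $X^*$ as the graph of a $(1,0)$-form $\alpha$, and use the $\mathrm{LS}$ condition to get $\partial\alpha=0$ and $\bar\partial\alpha$ purely imaginary. Up to conventions this is the paper's passage through the real closed $1$-form $\beta=\Re\alpha\in\Omega^1(\KK^\C(x))$, and finding a real $h$ with $\alpha=\partial h$ is exactly finding a real primitive $\eta$ with $\beta=d\eta$.

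The gap is precisely where you say it is, and you have not closed it. Neither of your proposed routes works as stated. Invoking \cite{Do} ``directly'' does not bypass the obstruction: the $\mathrm{LS}$/potential correspondence there is local, and the paper itself cites \cite{Do} for the local statement and then must prove exactness separately. Your second route---a global potential on $\cO$ built from the Killing form---is not carried out, and is not how the paper proceeds. Also, the obstruction \emph{is} a de~Rham class, that of the real closed form $\beta$, not a Bott--Chern class: your $\mathfrak{su}(2)$ example does not show the question lies beyond de~Rham; it shows that $H^1(\KK^\C(x))\neq 0$ while the particular class $[\beta]$ vanishes.

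The missing idea is short but decisive. The form $\beta$ measures the displacement of the section $X^*$ from the zero section $\KK^\C(x)$, so $\beta$ vanishes on $X^*\cap\KK^\C(x)=\KK(x)$. The Cartan decomposition $\KK^\C=\KK\exp(i\kk)$ gives a deformation retraction of $\KK^\C(x)$ onto the compact orbit $\KK(x)$, so $H^1(\KK^\C(x))$ is detected on $\KK(x)$; a closed $1$-form that vanishes along $\KK(x)$ is therefore exact. This is the argument the paper gives, and it is exactly the step your proposal leaves open.
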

\begin{proof}
The submanifold $X^*\subset T^{*1,0}\KK^\C(x)$ is the graph of a section $\varsigma$. By item (i) in Proposition 
\ref{pro:fixedpoints-sections}, it is an $\mathrm{LS}$ section. It follows from \cite[Section 2]{Do} that:
\begin{itemize}
 \item the pullback $\varsigma^*\Im d\lambda$ is a (real) symplectic form on $\KK^\C(x)$;
 \item under the identification  $T^{*1,0}\KK^\C(x)\cong T^*\KK^\C(x)$ which takes a complex linear form to its real part,
 the section $\varsigma$ is a closed 1-form $\beta$;
 \item if $\beta$ is exact,  then any primitive $\eta\in C^\infty(\KK^\C(x))$ satisfies:
$2i\bar{\partial}\partial \eta=\varsigma^*\Im d\lambda$. 
\end{itemize}
In such a situation,  pulling back again from $\KK^\C(x)$ to $X^*$ we would conclude that 
$h={\varsigma^{-1}}^*\frac{1}{2}\eta\in  C^\infty(X^*)$ is a K\"ahler potential for $-\Im\Omega$.

Therefore, it remains to discuss the exactness of $\beta$. Because $\KK^\C$ is invariant under 
the Cartan involution $\theta$,  the Cartan decomposition of $\GG^\C$ induces a Cartan decomposition 
$\KK^\C=\KK\exp(i\kk)$.
Thus, if we let $\KK_x$ denote the isotropy group of the action of $\KK$ on $x$, we obtain diffeomorphism $\KK^\C(x)\cong T^*\KK/\KK_x$,
where the orbit $\KK(x)$ goes to the zero section. Hence, the compact $\KK$-orbit $\KK(x)$ is a deformation retract of $\KK^\C(x)$.
The submanifolds $X^*$ and $\KK^\C(x)$ intersect precisely in that $\KK$-orbit $\KK(x)$. Therefore, the closed 1-form 
$\beta \in \Omega^1(\KK^\C(x))$
vanishes along $\KK(x)$, which implies that it is exact. 
\end{proof}

\begin{proposition}\label{pro:potential-properties} The  K\"ahler potential $h$ in (\ref{eq:Kahler-potential})  has the following properties:
 \begin{enumerate}[(i)]
 \item  It is real analytic.
  \item It is invariant under the commuting actions of $\KK$ and the involution $\sigma$.
  \item It is Morse--Bott, positive, and vanishes exactly on $\KK(x)$.
  \item It is proper. 
 \end{enumerate}
\end{proposition}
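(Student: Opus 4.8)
The plan is to realise $h$ as an honest strictly plurisubharmonic Kähler potential and then read off all four properties from that single fact together with the geometry of $\KK(x)$. Since $\theta\sigma$ is holomorphic, $\KK^\C(x)=\cO^{-\theta\sigma}$ is a complex submanifold of $\cO$, and the holomorphic Iwasawa projection $\Pi$ of Lemma~\ref{lem:Iwasawa} restricts to it as a biholomorphism onto $X^*$ (Proposition~\ref{pro:fixedpoints-sections}(ii)) intertwining the complex structure of $\KK^\C(x)$ with the flag complex structure that $X^*$ inherits from the compact orbit $X$. Under this identification the equation $-\Im\Omega=i\partial\bar\partial h$ of Proposition~\ref{pro:LS} exhibits $h$ as a potential for $-\Im\Omega$, which by Proposition~\ref{pro:fixedpoints-sections}(i) is symplectic on $X$ and, with the conventions fixed above, is the standard \emph{positive} KKS Kähler form of the compact coadjoint orbit; hence $h$ is strictly plurisubharmonic. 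Property~(i) is then immediate: the orbit $\cO$, the Killing form, the Iwasawa and Cartan decompositions and the section $\varsigma$ are all real analytic, so the closed $1$-form $\beta=d\eta$ is real analytic, and a primitive of a real-analytic closed form is real analytic; thus $h=\tfrac12({\varsigma^{-1}})^*\eta$ is real analytic.

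For~(ii) I would note that the construction of $\beta$ and $\eta$ in Proposition~\ref{pro:LS} is equivariant: the actions of $\KK$ and of $\sigma$ both preserve $\cO$, the Iwasawa ruling, and the two sections $X^*$ and $\KK^\C(x)$ ($\sigma$ normalises $\KK^\C$ because it commutes with $\theta\sigma$, and it fixes $x$ as used in the proof of Proposition~\ref{pro:fixedpoints-sections}), while $\KK$ preserves $\Omega$ and the anti-holomorphic $\sigma$ sends $\Omega$ to $\overline\Omega$. A direct check, in which the anti-holomorphicity of $\sigma$ is compensated by the corresponding behaviour of $d\lambda$, shows that the closed real $1$-form $\beta$ is invariant under both $\KK$ and $\sigma$. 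Its primitive $\eta$ is then invariant up to an additive constant $g^*\eta-\eta$; since $\beta$ vanishes along $\KK(x)$ (proof of Proposition~\ref{pro:LS}) the function $\eta$ is constant there, and the normalisation $\eta(x)=0$ makes it vanish on all of $\KK(x)$, so evaluating the constant at $x$ and using $g\cdot x\in\KK(x)$ and $\sigma(x)=x$ forces it to vanish. Hence $h$ is invariant under the commuting actions of $\KK$ and $\sigma$.

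The core is~(iii). That $h$ vanishes on $\KK(x)$ is exactly the normalisation just used. The decisive geometric fact is that $\KK(x)$ is a compact, maximal totally real submanifold of $X^*\cong\KK^\C(x)$: its tangent space is $\kk/\kk_x$ and $J$ carries it isomorphically onto the normal directions. At a point $p\in\KK(x)$ we have $dh_p=0$ and $\mathrm{Hess}\,h_p$ vanishes on $T_p\KK(x)$; strict plurisubharmonicity says the Levi form $\mathrm{Hess}\,h_p(v,v)+\mathrm{Hess}\,h_p(Jv,Jv)$ is positive, so taking $v\in T_p\KK(x)$ gives $\mathrm{Hess}\,h_p(Jv,Jv)>0$, i.e. $\mathrm{Hess}\,h_p$ is positive definite on the normal space $N_p=J\,T_p\KK(x)$. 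This is precisely the Morse--Bott nondegeneracy along $\KK(x)$. To see that $\KK(x)$ is the only critical set and that $h>0$ off it, I would restrict $h$ to the holomorphic curves $\zeta\mapsto\exp(\zeta\xi)(x)$ for $\xi\in\kk$ not fixing $x$: by $\KK$-invariance $h$ depends only on $t=\Im\zeta$, and strict plurisubharmonicity makes $t\mapsto h(\exp(it\xi)x)$ strictly convex with a critical point at $t=0$, hence strictly positive for $t\neq0$ and free of other critical points. Since the Cartan decomposition $\KK^\C=\KK\exp(i\kk)$ places every point of $\KK^\C(x)$ on such a curve up to the $\KK$-action, $h$ is positive away from $\KK(x)$ and $\KK(x)$ is its unique critical manifold.

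Finally~(iv) rests on the same one-variable functions $\phi(t)=h(\exp(it\xi)x)$: being strictly convex with $\phi'(0)=0$, each satisfies $\phi(t)\ge\phi(1)+\phi'(1)(t-1)$ with $\phi'(1)>0$ and so grows at least linearly. The diffeomorphism $\KK^\C(x)\cong T^*\KK/\KK_x$ identifies the ends of $X^*$---equivalently the approach to the lower-dimensional orbits in $X\setminus X^*$---with $\|\xi\|\to\infty$ along the fibre $\exp(i\kk)(x)\cong i\kk/i\kk_x$. Combining the per-ray linear bound with compactness of the unit sphere in $\kk/\kk_x$ and the real-analytic dependence on $\xi$ to obtain a uniform positive lower bound for $\phi'_\xi(1)$ yields a uniform linear estimate $h\gtrsim\|\xi\|$, whence the sublevel sets are compact and $h$ is proper. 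I expect~(iv) to be the main obstacle: strict convexity by itself does not force coercivity, so the argument genuinely needs both the compactness/uniformity step and the correct matching of the ends of $X^*$ with the boundary orbits inside the compact orbit $X$. Controlling this boundary behaviour---rather than the local analysis near $\KK(x)$---is the delicate point, and is where the explicit description of the hyperbolic orbit in \cite{Li} and the properness results for Kähler potentials of \cite{Do} would enter.
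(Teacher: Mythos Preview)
For (i) and (ii) your argument is essentially the paper's. For (iii) you take a genuinely different route. The paper first identifies the entire critical locus in one stroke as the intersection of the two sections, $\mathrm{Crit}(h)=X^*\cap\KK^\C(x)=\KK(x)$, and then computes the normal Hessian explicitly in the root-space decomposition $u=\sum_{\alpha}(u_\alpha+\theta u_\alpha)$, obtaining $\mathrm{Hess}_x([u',x],[u',x])=\sum_\alpha 4\alpha(x)\Re\langle u_\alpha,-\theta u_\alpha\rangle>0$ from positivity of $\alpha(x)$ on the positive Weyl chamber. Your Levi-form argument (strict plurisubharmonicity gives $\mathrm{Hess}(v,v)+\mathrm{Hess}(Jv,Jv)>0$, while $\KK$-invariance forces $T_p\KK(x)\subset\ker\mathrm{Hess}_p$) is cleaner and uses nothing beyond the fact that $\KK(x)$ is maximal totally real; it would apply verbatim to any $\KK$-invariant K\"ahler potential. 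Your convexity-along-one-parameter-subgroups argument for positivity and uniqueness of the critical set is also correct and different from the paper's section-intersection argument; note that it tacitly uses that $\KK^\C_x$ is $\theta$-stable (so $\exp(i\xi)x\in\KK(x)$ forces $\xi\in\kk_x$), which holds because $\theta(x)=-x$.

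The genuine gap is (iv). Your per-ray convexity bound is the right germ, but it does not close as stated: the function $\xi\mapsto\phi'_\xi(1)$ is not defined on $\kk/\kk_x$ (different representatives give different curves, since $\xi$ and $\xi_0\in\kk_x$ need not commute), and on the unit sphere of $\kk$ it vanishes along $\kk_x$. Even after passing to a complement $\kk_x^\perp$ you still need that $\KK\times\kk_x^\perp\to\KK^\C(x)$, $(k,\xi)\mapsto k\exp(i\xi)x$, is surjective and proper---a Mostow-type statement that is true but not free. The paper sidesteps all of this in one line by quoting Azad--Loeb \cite[Theorem~1]{AL}: a $\KK$-invariant strictly plurisubharmonic function on a $\KK^\C$-homogeneous space that attains a minimum is automatically a proper exhaustion. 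This is exactly the packaged version of the argument you are sketching; the references \cite{Li} and \cite{Do} you point to at the end do not contain such a properness statement.
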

\begin{proof} Because $X^*$ is the fixed point set of an anti-holomorphic involution, it is a real analytic submanifold. Therefore, the
 section $\varsigma\in \Omega^{*1,0}(\KK^\C(x))$ and its corresponding closed 1-form $\beta\in \Omega^1(\KK^\C(x))$ are real analytic. Hence
 so are any of the primitives of $\beta$.
 
 The action of $\sigma$ on $\GG^\C$ defines a semidirect product group $\Z_2\ltimes \GG^\C$. Because $\sigma$ fixes the 
 centralizer $Z$ and its normalizer $\mathrm{P}$, the group acts by bundle transformations on $\GG^\C/\mathrm{Z}\to \GG^\C/\mathrm{P}$. This action 
 is transferred by the biholomorphism (\ref{eq:Iwasawa}) to an action on $\cO\to \cO/\sim$ by affine bundle biholomorphisms. 
 Both $\cA$ and the orbit $\KK^\C(x)$ are invariant by the subgroup $\Z_2\ltimes \KK^\C$. The cotangent  lift of the latter action
 is identified
 by $\chi$ with the action on $\cA\to \cA/\sim$. The section $X^*$ is invariant by the subgroup  $\Z_2\times \KK$. 
 Therefore, the closed 1-form $\beta$ is  $\Z_2\times \KK$-invariant and so  
  any of its primitives are. The involution $\sigma$ preserves the Iwasawa fibers and,  therefore, projecting along fibers identifies $\KK^\C(x)$ and $X^*$ in a 
  $\Z_2\times \KK$-equivariant
  fashion. Thus, the pullback of $\beta$ to $X^*$ is also a $\Z_2\times \KK$-invariant exact 1-form and therefore the K\"ahler potential $h$
 is invariant by the commuting actions of $\sigma$ and $\KK$.

By definition the critical points of $h$ are the intersection of the graph of $\varsigma$ with the zero section:
$X^*\cap \KK^\C(x)=\KK(x)$.
The Morse--Bott condition requires first transversality of the previous intersection. Such intersection 
is the real form for the involution $\sigma$ on both $\KK^\C(x)$ and $X^*$ (the latter with respect to the quotient complex structure).
Because the Iwasawa projection intertwines both anti-holomorphic involutions, it follows that 
\[T_x\KK^\C(x)\cap T_xX^*=T_x\KK(x).\]
Second, by $\KK$-invariance we just need to check the Hessian condition on the normal bundle to $\KK(x)$ at $x$: $T_xX^*/T_x\KK$.
We shall take as representative of the quotient tangent space 
\[\Pi_*T_x\exp(i\kk)(x)=\Pi_*[i\kk,x].\]
Let $u\in \kk$ be decomposed as $u=\sum_{\alpha\in \Sigma^+} u_\alpha+\theta u_\alpha$, $u_\alpha\in \gg_\alpha^\sigma $.
We have:
\[[iu,x]=i\sum_{\alpha\in \Sigma^+} [u_\alpha+\theta u_\alpha,x]=-i\sum_{\alpha\in \Sigma^+} \alpha(x)(u_\alpha-\theta u_\alpha)=-\sum_{\alpha\in \Sigma^+} \alpha(x)i(u_\alpha-\theta u_\alpha),  \]
and therefore 
\begin{equation}\label{eq:proj-tg}
\Pi_*([iu,x])=i[u,x]+\sum_{\alpha\in \Sigma^+} 2i\alpha(x)u_\alpha=\sum_{\alpha\in \Sigma^+}\alpha(x) i(u_\alpha+\theta u_\alpha).
\end{equation}
If we let $u'=-\sum_\alpha i(u_\alpha-\theta u_\alpha)\in \gg$,  then we can write
\[[u',x]=\sum_\alpha i\alpha(x)(u_\alpha+\theta u_\alpha)=\Pi_*([iu,x]).\]
Therefore, the intrinsic derivative of ${\varsigma^{-1}}^*\beta $ in the direction of $[u',x]$ reads:
\[\nabla {\varsigma^{-1}}^*\beta ([u',x])=\sum_{\alpha\in \Sigma^+} 2i\alpha(x)u_\alpha.\]
To go from the intrinsic derivative of the 1-form to the Hessian of the primitive, we 
use the linear isomorphism from $\nn^\sigma$ to $T^*_xX^*$ given by twice the real part of the Killing form:
\[\begin{split}\mathrm{Hess}_x([u',x],[u',x])& =2\Re\langle \nabla{\varsigma^{-1}}^*\beta ([u',x]),u' \rangle=
2\Re \langle  \sum_{\alpha\in \Sigma^+} 2i\alpha(x)u_\alpha, -\sum_{\alpha\in \Sigma^+} i(u_\alpha-\theta u_\alpha)\rangle= \\
&=\sum_{\alpha\in \Sigma^+} 4\alpha(x)\Re\langle u_\alpha,-\theta u_\alpha\rangle >0,
\end{split}
\]
where in the last equality, we used that $\nn$ is isotropic for the Killing form and $2\Re\langle \cdot,-\theta\cdot\rangle$ is an inner product on
$\hh$ for which root spaces are mutually orthogonal, and in the final inequality, we used  that $x$ belongs to the positive Weyl chamber. 

From the strict positivity of the Hessian we conclude that $\KK(x)$ are minima, and thus $h$ is a positive function. 

The $\KK$-equivariant diffeomorphism $\Pi^{-1}:X^*\to \KK^\C(x)$ allows us 
to regard $h$ as a $\KK$-invariant function on a complex homogeneous space. This function is strictly plurisubharmonic as it is a K\"ahler potential.
 Because $h$ attains a minimum  by \cite[Theorem 1]{AL},
it must be proper. 
\end{proof}

We define $Y\in \mathfrak{X^*}$ to be one half of the gradient vector field of $h$. This 
is the vector field in the statement of Theorem \ref{thm:canonical}.

\begin{proof}[Proof of Theorem \ref{thm:canonical}]

By (\ref{eq:Kahler-potential}) $Y$ is a Liouville vector field. By items (iv) and (iii) in Proposition \ref{pro:potential-properties},
its trajectories for negative time have limit on $X^*$ and  its linearization
 $\nabla Y: TX^*|_{\KK(x)}\to TX^*|_{\KK(x)}$ has rank
half the fiber dimension of the tangent bundle. In particular, the dynamics of $Y$ are normally hyperbolic on $X^*$, and thus by the unstable manifold
theorem trajectories
for negative time have a unique limit point in $X^*$ \cite{HPS}. If $Y$ were complete, then by  Nagano's characterization of cotangent bundles 
\cite[Theorem 4.1]{Na},
the vector field would give rise to a symplectomorphism from $X^*$ to $T^*\KK(x)$ taking $Y$ to the Euler vector field. 
The vector field $Y$ is backwards complete. The construction in \cite[Theorem 3]{Na} 
also extends for vector fields that are backwards complete, but 
the outcome is a symplectomorphism $\psi$ onto a domain $D\subset T^*\KK(x)$, which is (fiberwise) star-shaped.

The symplectic form $-\Im\Omega$ and K\"ahler metric are $\KK$-invariant, and, therefore, $Y$ is $\KK$-invariant as well. This
symmetry extends to $\Z_2\times \KK$-invariance by  item (ii) in Proposition \ref{pro:potential-properties}. The 
cotangent lift of the left action of $\KK$ on $\KK(x)$ is by vector bundle automorphisms. Therefore,
the Euler vector field is $\KK$-invariant. This symmetry extends as well to  $\Z_2\times \KK$-invariance, where the $\Z_2$-action
comes from the involution $\iota$ that sends a covector to its opposite. 
Because Nagano's symplectomorphism $\psi$ is characterized by taking integral curves of $Y$ to integral curves of the Euler vector field, and
because the action of $\Z_2\times \KK$ preserves $X^*$, the symplectomorphisms 
must be $\Z_2\times \KK$-equivariant. A posteriori, we deduce that the fiber of the unstable normal bundle of $Y$ at a point in $X^*$ is the 
-1-eigen-space of $\sigma$.

Both $(X^*,-\Im\Omega)$ and $(T^*\KK(x),d\lambda)$ are $\KK$-Hamiltonian spaces. Thus, $\psi$ pulls back 
any momentum map for the latter space to a momentum map to the former space. The canonical momentum map $\mu$ for the cotangent lift 
of the action sends the zero section to zero. The natural momentum map $\nu$ for  $(X^*,-\Im\Omega)$ is the
restriction of the projection $\pp\oplus i\kk\to \kk$ followed by multiplication times $-i$ and the isomorphism given 
by twice the real part of the Killing form. Because $\KK(x)$ is mapped to zero, $\psi$ must
intertwine both momentum maps. 

The canonical momentum map $\mu:T^*\KK(x)\to \kk^*$ is linear on fibers. Therefore, it relates Euler vector fields. Because 
$\nu=\mu\circ \psi$ and $\psi$ relates $Y$ to the Euler vector field, then $\nu$ also relates $Y$ to the Euler vector field of $\kk^*$.

A subset of $T^*\KK(x)$ is bounded if and only if its image by the canonical (proper) momentum map $\mu$ is bounded. By the previous
paragraph $\mu(D)=\nu(X^*)$. Because $\nu(X^*)\subset \nu(X)$ the former subset is bounded. Hence, $D\subset T^*K$ is a bounded subset.   
\end{proof}

\begin{example}\label{ex:su(2)} We let $\GG=\SU(2)$ and we fix $x\in i\gg\subset \sl(2,\C)$
\[x=\begin{pmatrix}
     1 & 0 \\ 0 & -1
    \end{pmatrix}.
\]
The holomorphic adjoint orbit in which we are interested  is $\cO=\SL(2,\C)(x)$.
The Lie algebra $\nn$ of positive root spaces is the subspace of upper triangular
matrices with zeros in the diagonal. The Iwasawa fiber over $x$ described in Lemma \ref{lem:Iwasawa} is 
\[\mathrm{N}(x)=\begin{pmatrix}
                           1 &  \zeta\\
                           0 & 1
                          \end{pmatrix}\cdot x=\left\{\begin{pmatrix}
                           1 &  -2\zeta\\
                           0 & -1
                          \end{pmatrix}\,|\, \zeta\in \C\right\}.  
\]
The complex involution $\theta\sigma$ on $\sl(2,\C)$ takes a matrix to its transpose. 
Therefore, the spherical group is 
$\SO(2,\C)$. The exponential map is given by 
\begin{equation}\label{eq:exponential}z\mapsto\begin{pmatrix} \cos(z) & -\sin(z)\\
                           \sin(z) & \cos(z)
                          \end{pmatrix},\quad z=a+ib\in \C.
                          \end{equation}                   
The orbit $\SO(2,\C)(x)$ is the holomorphic Lagrangian of $(\cO,\Omega)$ given by 
\[\left\{\begin{pmatrix}
                           z & w\\
                           w & -z
                          \end{pmatrix}\,|\, z,w\in\C,\,\, z^2+w^2=1\right\}. 
\]
Next we compute the complex 1-form $\varsigma$ on $\SO(2,\C)(x)$ defined by the section $X=\SU(2)(x)$ as described in Proposition \ref{pro:LS}.
We use that 
if $h\in \SO(2,\C)$ has Iwasawa decomposition $h=gle$ then $\varsigma$ is the dual of
$g\cdot x-h\cdot x\in g\cdot \nn$.
Therefore, for $c\in \kk$ we have 
\begin{equation}\label{eq:sl(2)1-form}
 \varsigma_{h\cdot x}(h_*c)=\langle g\cdot x-h\cdot x,h\cdot c\rangle=\langle x-le\cdot x,le\cdot c\rangle=4\mathrm{tr}(x-l\cdot x)(le\cdot c).
\end{equation}
The Gram--Schmidt algorithm for the standard Hermitian inner product in $\C^2$ returns the Iwasawa factorization:
\begin{equation}\label{eq:GS}\begin{pmatrix} \cos(z) & -\sin(z)\\
                           \sin(z) & \cos(z)
                          \end{pmatrix}=\begin{pmatrix}\frac{\cos(z)}{d} & -\frac{\overline{\sin(z)}}{d}\\
                           \frac{\sin(z)}{d} & \frac{\overline{\cos(z)}}{d}
                          \end{pmatrix}\begin{pmatrix} 1 & \cos(z)\overline{\sin(z)}-\sin(z)\overline{\cos(z)}\\
                           0 & 1
                          \end{pmatrix}\begin{pmatrix} d & 0\\
                           0 & \frac{1}{d}
                          \end{pmatrix},
\end{equation}                          
where $d^2=\cos(z)\overline{\cos(z)}+\sin(z)\overline{\sin(z)}$. If we pull back $\varsigma$ to $\C$ via the exponential map (\ref{eq:exponential})
so that $z=a+ib$ and $c\in \C$, then (\ref{eq:sl(2)1-form}) yields
\[\varsigma_z(c)=-8ic\frac{\sinh(2b)}{\cosh(2b)}.\]
The primitive of 
\[\frac{1}{2}\Re\varsigma=4\frac{\sinh(2b)}{\cosh(2b)}db\]
is the (normalized) K\"ahler potential 
\[h(a,b)=2\ln \cosh(2b).\]
Therefore, the pullback to $\C$ of the KKS symplectic form is 
\[\w=\frac{1}{2}d\circ J^*\left(\frac{1}{2}\Re\varsigma\right)=\frac{4}{\cosh^2(2b)}da\wedge db.\]
The open dense orbit $X^*\subset \SU(2)(x)$ can be explicitly parameterized
by using the the first factor of the Iwasawa decomposition (\ref{eq:GS}):
\begin{equation}\label{eq:hyp-coord}\begin{pmatrix} \cos(z) & -\sin(z)\\
                           \sin(z) & \cos(z)
                          \end{pmatrix}\cdot x=\frac{1}{\cosh(2b)}\left(\cos(2a)\begin{pmatrix} 1 & 0\\ 0 & -1\end{pmatrix}+
                          \sin(2a) \begin{pmatrix} 0 & 1\\ 1 & 0\end{pmatrix}\right)+
                         \frac{\sinh(2b)}{\cosh(2b)}\begin{pmatrix} 0 & -i\\ i & 0\end{pmatrix}.
\end{equation}
This implies that the cylindrical coordinates are
 \[\theta=2a, \quad z=\frac{\sinh(2b)}{\cosh(2b)}.\] 
Thus the pullback to $\C$ of  $d\theta \wedge dz$ is
\[2da\wedge 2\frac{1}{\cosh^2(2b)}db=\frac{4}{\cosh^2(2b)}da\wedge db,\]
which as expected is $\w$.
The Hamiltonian vector field of $h$ is 
\[X_h=\sinh(2b)\cosh(2b)\frac{\partial}{\partial a}=\frac{1}{2}\sinh(4b)\frac{\partial}{\partial a},\]
and therefore the gradient vector field is
\[Y=\frac{1}{2}\sinh(2b)\cosh(2b)\frac{\partial}{\partial b}.\]
Its push forward by the coordinate chart is a vector field tangent to the meridian through $x$. Its height component is:
\[\frac{1}{2}\sinh(2b)\cosh(2b)\frac{\partial}{\partial b}\left(\frac{\sinh(2b)}{\cosh(2b)}\right) \frac{\partial}{\partial z}=
\frac{1}{2}\sinh(2b)\cosh(2b)\left(\frac{2}{\cosh^2(2b)}\right) \frac{\partial}{\partial z}=z\frac{\partial}{\partial z}.\]
Hence, in cylindrical coordinates, $Y$ is the Euler vector field of $T^*\mathbb{RP}^1$.

\end{example}

\subsection{Arbitrary involutions}\label{ssec:involutions}

Let us suppose that $\sigma$ is an anti-complex Lie algebra involution on $\gg^\C$ that commutes with the Cartan involution $\theta$. To place ourselves
in the hypotheses of Theorem \ref{thm:canonical}, we must require that $X$ intersects the fixed point set $\ss$ of the restriction of $\sigma$ to $i\gg$. We need to make several adjustments
in the constructions of the previous sections. 

\begin{enumerate}
 \item The maximal abelian Lie algebra of $i\gg$ is now chosen to be $\sigma$-stable (standard). This means it splits as 
 $\aa\oplus i\tt$, $\aa\subset \ss$, $i\tt\subset i\kk$.
  Upon restriction of the roots of $(\gg^\C,\aa\oplus i\tt)$ to $\aa$ we get a (reduced) root system for $(\hh,\aa)$.
 We first choose an ordering of the reduced root system and then extend it to an ordering of the root system: $\Sigma=\Sigma^+\cup \Sigma^-$. This means that 
 for a non-imaginary root --- a root non-vanishing on $\aa$ --- its positivity is determined by the positivity of its restriction to $\aa$.
 \item The intersection of $X$ with $\ss$ is non-empty. Because all maximal abelian subalgebras of $\ss$ are conjugate under the action of $\KK$
 we may assume that $\aa$ intersects $X$. In that intersection, there is a unique point $x$ lying in the positive Weyl chamber 
 of the ordered reduced root system/ordered root system.
 \item Lemma \ref{lem:Iwasawa} concerns the Cartan involution and thus holds true.
 \item Item (ii) in Proposition \ref{pro:fixedpoints-sections} is about properties of the spherical subgroup $\KK^\C$, which is the fixed point set of the holomorphic involution $\theta\sigma$.
 We need  further analysis of the affine fibers $x+\nn(x)$ and $x+\overline{\nn}(x)$  to deduce that their intersection and their behavior under the action of $-\theta\sigma$ is the same as
 the one described when $\sigma$ is the Weyl involution: First, we recall that every imaginary root vanishes on $\aa$ and therefore its corresponding root space acts trivially on $x\in X\cap \ss$. If we 
 let $\nn_{(\ss)}$ and $\nn(\ss)$ denote the eigen-spaces for the positive imaginary roots and the eigen-space for the positive non-imaginary roots, respectively,
 we obtain a direct sum decomposition of subalgebras
 \[\nn=\nn(\ss)\oplus \nn_{(\ss)}.\]
 Therefore, we have the corresponding factorization  
 \[\mathrm{N}=\mathrm{N}(\ss)\mathrm{N}_{(\ss)}.\]
 Because $\mathrm{N}_{(\ss)}$ is contained in $\mathrm{Z}$, the centralizer of $x$, we may assume in the proof of Proposition \ref{pro:fixedpoints-sections}  that the fiber $x+\nn(x)$
 of the Iwasawa  ruling of $\cO\to X$ is an affine subspace of
 $x+\nn(\ss)$. Likewise, in the opposite Iwasawa decomposition, we may assume that the fiber over $x$
 is an affine subspace of $x+\overline{\nn}(\ss)$. Because $\nn(\ss)\cap \overline{\nn}(\ss)=\{0\}$
 and $\sigma$ fixes the subspaces $\nn(x)\subset \nn(\ss),\overline{\nn}(x)\subset \overline{\nn}(\ss)$  (for non-imaginary roots their positivity or negativity is dictated by their 
 restriction to $\aa\subset \ss$) and the direct sum $\overline{\nn}_{(\ss)}\oplus \nn_{(\ss)}$, Proposition  \ref{pro:fixedpoints-sections} remains valid. 
 \item Propositions \ref{pro:hol-ctg} and \ref{pro:LS} in Section \ref{sec:LS} remain valid, as they hinge on Proposition \ref{pro:fixedpoints-sections}
 and general properties of holomorphic cotangent bundles and their sections. The proof of Proposition \ref{pro:potential-properties}
 uses two ingredients involving the involution $\sigma$: first,  that the Iwasawa projection intertwines the involution $\sigma$ on the total space $\cO$ and on the base $X$. This holds
 true for arbitrary involutions because as noted we can consider as Iwasawa fiber the $\sigma$-invariant subalgebra $\nn(\ss)$. Second,
 a decomposition of a vector $u\in \kk$ parameterized by positive roots. For a general involution we should replace the positive roots by the positive non-imaginary
 roots:
 \[u=\sum_{\alpha\in \Sigma^+(\ss)} u_\alpha+\theta u_\alpha,\quad u_\alpha\in \gg_\alpha^\sigma.\]
 Note that this is equivalent to using the positive roots of the reduced root system and a basis of the corresponding eigen-spaces (which may have multiplicity
 greater than one). 
\end{enumerate}
With the previous adjustments Theorem \ref{thm:canonical} is valid for arbitrary Lie algebra involutions on $\gg$.

\begin{example}\label{ex:flags} Any regular orbit of $\mathfrak{su}(3)$ is diffeomorphic to the 
manifold of full flags in $\C^3$. If we apply Theorem \ref{thm:canonical} to any regular orbit and the Weyl involution that conjugates
the coefficients of a matrix, then we obtain a symplectomorphism to a domain of the cotangent bundle over the manifold of real full flags --- a manifold that
is diffeomorphic to the quotient of $\SU(2)$ by the quaternions (see e.g. \cite[p. 335]{DKV}). 

In this example, we shall apply Theorem  \ref{thm:canonical} to  the involution
\[\tau: \sl(3,\C)\to \sl(3,\C),\quad z\mapsto -I_{2,1}z^*I_{2,1},\quad I_{2,1}=\begin{pmatrix}
           1 & 0 & 0 \\
           0 & 1 & 0 \\
           0 & 0& -1
          \end{pmatrix}\]
and to the (regular) orbit of  
\[x=\begin{pmatrix}
           0 & 0 & i \\
           0 & 0 & 0 \\
           -i & 0& 0
          \end{pmatrix}.\]
The involution $\tau$ fixes $x$ and          
the spherical subgroup $\KK^\C$ associated to $\tau$ is
\begin{equation}\label{eq:spherical-flags}
\begin{pmatrix}
           z & w & 0 \\
           \mu & \nu & 0 \\
           0 & 0& \epsilon
          \end{pmatrix},\quad  z,w,\mu,\nu,\epsilon\in \C,\quad (z\nu-w\mu)\epsilon=1.
\end{equation}          
          
Our main objective is to describe the relation between the orbits of the action of $\KK^\C$ in $X=\SU(3)(x)$ and the Gelfand--Zeitlin map
\[\lambda: X\to \R^3,\quad y \mapsto (\lambda^{(1)}(y),\lambda^{(2)}_1(y),\lambda^{(2)}_2(y)),\]
where $\lambda^{(i)}_j$ is the jth eigen-value (in decreasing order) of the ith principal minor. 
              \begin{figure}\label{fig:1}
\centering
\includegraphics[height=5.5cm]{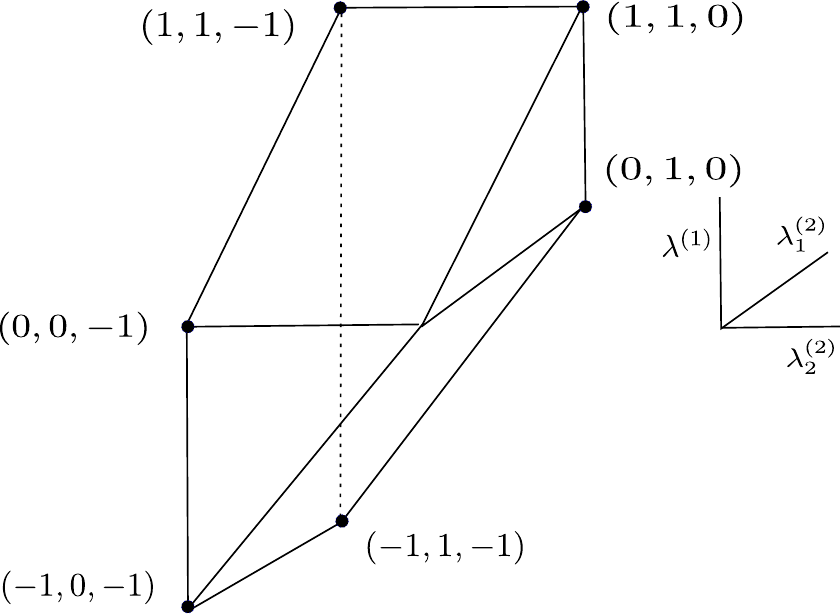}
\caption{The image of the Gelfand-Zeitlin map}
\end{figure}

The image of the Gelfand--Zeitlin map is the polytope showed in Figure \ref{fig:1} (see \cite[section 2.3]{NU} for a detailed account 
of the Gelfand-Zeitlin system).
By definition $x$
is mapped to the origin. The remaining six vertices are the image of the diagonal matrices in $X$. For example, the one mapped 
to $(1,1,0)$ is
\begin{equation}\label{eq:conj3}y=s\cdot x=\begin{pmatrix} 1 & 0 & 0 \\
          0 & 0 & 0 \\
          0  & 0 & -1
          \end{pmatrix},\quad s=\frac{\sqrt{2}}{2}\begin{pmatrix} 1 & 0 & i \\
          0 & 1 & 0 \\
          i  & 0& 1
          \end{pmatrix}.
\end{equation}
There are six  $\KK^\C$-orbits which are the preimages of the three vertical edges --- the closed orbits --
the preimages of the interior of the two faces that do not contain the origin, and 
the preimage of the complement of the two faces
--- the dense open orbit $X^*$. For instance,  $w\in X$ belongs to the preimage of the segment joining the vertices $(1,1,-1)$ and $(-1,1,1)$ if
and only if the eigen-lines of the eigen-values $\pm 1$ are contained in the plane $\C^2\subset \C^3$ spanned by the first two vectors of the canonical basis.
To describe  $\KK^\C(w)$ we 
write $w=g\cdot y$, $g\in \SU(3)$, and we let the $\SU(3)$-factor in the Iwasawa/Gram-Schmidt factorization of $\KK^\C g$ act on $w$. Because matrices in $\KK^\C$
preserve $\C^2$,  matrices in $\KK^\C g$ take the plane spanned by the first and third vectors of the canonical basis to the plane $\C^2$ and their second 
and third columns are orthogonal. This implies that the $\SU(3)$-factor coming from the Gram-Schmidt factorization of matrices in $\KK^\C g$
also  takes the plane spanned by 
the first and third vectors of the canonical basis to the plane $\C^2$. Therefore, the orbit $\KK^\C(w)$ is contained in the preimage of the segment.
The previous argument also shows that the preimage of the segment is the $\KK$-orbit of any of its points.
Hence it is also the orbit of the complexified
spherical subgroup $\KK^\C$. The description of the two open orbits of complex dimension 2 and of the open dense orbit is analogous.

The second objective is a  partial description  of the K\"ahler potential on $X^*=\KK^\C(x)$ produced by Theorem \ref{thm:canonical}. 

The isotropy subgroup at $x$ for the action of $\KK^\C$  is the one-dimensional torus
\[\begin{pmatrix}
           \lambda & 0 & 0 \\
           0 & \lambda^{-2} & 0 \\
           0 & 0& \lambda
          \end{pmatrix},\quad  \lambda\in \C^*.\]
The subgroup $\SL(2,\C)\subset \KK^\C$ obtained by setting $\epsilon=1$ in (\ref{eq:spherical-flags})
is a full slice to the right action of the above one-dimensional torus on $\KK^\C$ and intersects each orbit in one point. 
Therefore, the orbit of $x$ by $\KK^\C$ (both in $X=\SU(3)(x)$ and in $\cO=\SL(3,\C)(x)$) is the same as the free orbit by $\SL(2,\C)$. In particular
we see that the Lagrangian $X^\tau=\KK(x)$ is diffeomorphic to the free orbit of the maximal compact subgroup $\SU(2)\subset \SL(2,\C)$, that is, it is a
(well-known)
 Lagrangian 3-sphere.

To construct the complex 1-form $\varsigma$ on $\SL(2,\C)$ as in (\ref{eq:sl(2)1-form}) it is necessary to obtain the Iwasawa decomposition for $h\in \SL(2,\C)$.
The maximal torus invariant by $\tau$ where $x$ lies is not inside the diagonal matrices. Hence, we need to conjugate it to a diagonal
torus, apply the Gram-Schmidt algorithm, and then conjugate back the factorization (cf. \cite{Sa}). 
More precisely, if for any $h\in \SL(2,\C)$ we write $hs^*=gle$, where 
$s$ is defined in (\ref{eq:conj3}), then the expression for $\varsigma$ in 
(\ref{eq:sl(2)1-form}) becomes
\begin{equation}\label{eq:fl(3)}
 \zeta_{h\cdot x}(h_*c)=\langle x-s^*les\cdot x,s^*les\cdot c\rangle=6\mathrm{tr}(x-s^*les\cdot x)(s^*les\cdot c),\quad c\in \sl(2,\C).
\end{equation}
The explicit formula for $\varsigma$ is rather involved due to lack of compatibility of matrices 
in $\SL(2,\C)s^*$ with the Gram--Schmidt algorithm.
It becomes  tractable if we confine ourselves to the curve  
\[\R\subset \sl(2,\C)\overset{\mathrm{exp}}{\longrightarrow}\SL(2,\C),\quad b\mapsto \begin{pmatrix}
           b & 0 & 0 \\
           0 & -b & 0 \\
           0 & 0& 0
          \end{pmatrix}\overset{\mathrm{exp}}{\longrightarrow}\begin{pmatrix}
           e^b & 0 & 0 \\
           0 & e^{-b} & 0 \\
           0 & 0& 1
          \end{pmatrix}.
\]
Right multiplication by $s^*$ of an element in the curve has the following factorization:

\[\frac{\sqrt{2}}{2}\begin{pmatrix} e^b & 0 & -ie^b \\
          0 & e^{-b} & 0 \\
          -i  & 0 & 1
          \end{pmatrix}=\begin{pmatrix}
           \frac{e^b}{\sqrt{e^{2b}+1}} & 0 & \frac{-i}{\sqrt{e^{2b}+1}} \\
           0 & 1 & 0 \\
           \frac{-i}{\sqrt{e^{2b}+1}} & 0 & \frac{e^b}{\sqrt{e^{2b}+1}}
          \end{pmatrix}\begin{pmatrix} 1 & 0 & \frac{i(1-e^{2b})}{2e^{b}} \\
          0 & 1 & 0 \\
          0  & 0& 1
          \end{pmatrix}\frac{\sqrt{2}}{2}\begin{pmatrix}
          \sqrt{e^{2b}+1} & 0 & 0 \\
           0 &  e^{-b} & 0 \\
           0 & 0 & \frac{2e^b}{\sqrt{e^{2b}+1}}
          \end{pmatrix}
          \]
The action of an element of the curve $h$ on $x$ is obtained by letting the special unitary factor above act on $sx$:
\[\frac{1}{2}\begin{pmatrix}
          \frac{e^{2b}-1}{e^{2b}+1} & 0 & \frac{2ie^{b}}{e^{2b}+1} \\
           0 & 0 & 0\\
          \frac{-2ie^{b}}{e^{2b}+1} & 0 &  \frac{1-e^{2b}}{e^{2b}+1}
          \end{pmatrix}
\] 
Its image by the Gelfand--Zeitlin map is the segment from the origin to $(1,0,0)$ (open in that edge).
The restriction of $\varsigma$ in (\ref{eq:fl(3)}) to points $b\in \R\subset \sl(2,\C)$ yields:
\[\varsigma_{b}(c)=3c_1e^{3b}\frac{e^{2b}-1}{e^{2b}+1},\quad c=\begin{pmatrix} c_1 & c_2 & 0\\
                                                          c_3 & -c_1 & 0\\ 0 & 0 & 0
                                                         \end{pmatrix}\]
Therefore, the pullback of the normalized K\"ahler potential is
\[h(b)=e^{3b}-\frac{2}{3}e^b+\frac{2}{3}\arctan{e^b}-\frac{1}{3}-\frac{\pi}{6}.\]

\end{example}

\section{The flow for imaginary time}\label{sec:wick-rotation}

The commuting involutions $\theta$ and $\sigma$ produce a common direct sum decomposition of $\gg^\C$ in $\pm 1$-eigen-spaces
\[\gg^\C=\gg\oplus i\gg=i\ss\oplus \kk\oplus \ss\oplus i\kk.\]
The product of the trivial vector field on $\ss^\C$ and minus the holomorphic Euler vector field on $\kk^\C$ is a vector field on $\gg^\C$ 
whose flow for time $i\tfrac{\pi}{2}$ intertwines $-\theta$ and $\sigma$. This flow is only compatible with the complex linear structure on $\gg^\C$,
and not with the Lie brackets (cf. Remark \ref{rem:hol-disk}).

The orbit $X=\GG(x)$ sits inside $\ss\oplus i\kk$. The adjoint orbit $X^\vee$ in the statement of Theorem \ref{thm:duality}
is $\mathrm{H}(x)\subset \ss\oplus \kk$, $\hh=\kk\oplus \ss$. The involution $\sigma$ fixes $x$ and $X^\vee$ is the corresponding real form of $\cO=\GG^\C(x)$:
$\mathrm{H}(x)=\cO^\sigma$.
The involutions $-\theta$ and $\sigma$  act on $X^\vee$ and $X$, respectively, with common fixed point set $\KK(x)$.
As for symplectic structures, the KKS symplectic forms on the 
real adjoint orbits correspond to $-\Im\Omega$ and $\Re\Omega$, respectively, and $\KK(x)$ sits in both orbits as a Lagrangian submanifold: 
 \[\xymatrix{ 
  & (\cO,\Omega)   &  \\
(X,\w=-\Im \Omega) \ar[ur]^-{}  &   &     (X^\vee,\Re\Omega)   \ar[ul]^-{}   \\  
 &  (\KK(x), 0) \ar[ur]^-{}  \ar[ul]^-{}  &
\qquad\qquad. }
\]
 If the flow for time $i\tfrac{\pi}{2}$ of a holomorphic vector field $\Lambda$ on (a subset of) $\cO$ is to intertwine
(subsets of) $(X,\sigma)$ and $(X^\vee,-\theta)$, it is natural that the linear projection $\gg^\C\to \kk^\C$ relates $\Lambda$ to (plus or minus) the Euler vector 
field. Furthermore, if the flow is to take $-\Im\Omega$ to $\Re\Omega$, then it is natural that $\Lambda$ be an anti-Liouville vector field for $\Omega$:
$\mathcal{L}_\Lambda \Omega=-\Omega$. The pullback form under the flow for time\footnote{A maximal integral curve of a holomorphic vector field has as domain a Riemann surface,
which maps to $\C$. For our integrated equations to make sense, we assume that we work with integral curves with (maybe small) domains inside $\C$.} 
$z\in \C$ is
$e^{-z}\Omega$,
and, therefore, for time $i\tfrac{\pi}{2}$ we obtain:
\[e^{-i\tfrac{\pi}{2}}\Omega=-i\Omega.\]

Because $X$ is a real form of $\cO$ and $-Y$ is a real analytic vector field on it, it
has a complexification $\Lambda$. In general, not much can be said about the domain of definition of a complexified object. However,
the following result --- whose proof is deferred to the Appendix --- shows that the domain
of definition of $\Lambda$ is rather large:
\begin{proposition}\label{pro:domain-complexification} The  vector field $\Lambda$ is defined in 
 \[\cB=\cA\cap -\theta(\cA)\subset \cO.\]
This is an open connected subset which is invariant under $\KK^\C$, and the restriction of $\Lambda$ to this subset  is $\KK^\C$-invariant.
\end{proposition}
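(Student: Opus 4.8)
The plan is to identify $\cB$ as the natural maximal domain on which the complexification $\Lambda$ of the real analytic vector field $-Y$ can be defined, and to exploit the fact that $-Y$ is the gradient (one half) of the Kähler potential $h$, whose behaviour was controlled in Proposition~\ref{pro:potential-properties}. The starting point is that $X^*$ is a real form of $\cO$ for the involution $-\theta$, so any real analytic object on $X^*$ extends to a holomorphic object on a neighbourhood of $X^*$ in $\cO$; in particular $\Lambda$ exists and is holomorphic near $X^*=\cO^{-\theta}\cap\cA$. The issue is purely one of describing how far this extension propagates. Since $Y$ was built $\KK$-equivariantly from the $\KK$-invariant potential $h$, and since $\KK^\C$ is the complexification of $\KK$, the first step is to observe that the complexified vector field $\Lambda$ must be $\KK^\C$-equivariant wherever it is defined, by the uniqueness of analytic continuation: the two holomorphic vector fields $\Lambda$ and $(\Ad_k)_*\Lambda$ agree on the real form where both reduce to the $\KK$-equivariant $-Y$, hence agree on their common connected domain. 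This forces the domain of $\Lambda$ to be $\KK^\C$-invariant.

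Next I would make precise why the domain is exactly $\cB=\cA\cap-\theta(\cA)$ rather than something larger or smaller. Recall from Proposition~\ref{pro:hol-ctg} that $\cA$ is biholomorphic to the holomorphic cotangent bundle $T^{*1,0}\KK^\C(x)$, and $X^*$ sits inside it as the graph of the section $\varsigma$. On $\cA$ the vector field $-Y$ was defined via the potential $h$, whose associated holomorphic data (the $(1,0)$-form $\zeta=\varsigma$ and its primitive) are holomorphic on all of $\cA$. Thus $\Lambda$, being the holomorphic gradient-type field attached to this data, is already defined on $\cA$ itself. The constraint $-\theta(\cA)$ enters because $\Lambda$ must be compatible with the involution $-\theta$ that cuts out $X^*$: concretely, one expects the relation $\Lambda=-(-\theta)_*\,\bar{\Lambda}$, or more simply that $\Lambda$ is tangent to and adapted to both the Iwasawa ruling (giving $\cA$) and its $-\theta$-image (the opposite Iwasawa ruling, giving $-\theta(\cA)$). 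Intersecting the two regions where the relevant holomorphic structures are nondegenerate yields $\cB$. I would verify openness from openness of $\cA$ and continuity of $-\theta$, and $\KK^\C$-invariance from the fact that $\KK^\C$ commutes with $\theta$ (since $\KK^\C=(\GG^\C)^{\theta\sigma}$ is $\theta$-stable) and preserves $\cA$.

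The remaining and genuinely nontrivial point is \emph{connectedness} of $\cB$. Openness and $\KK^\C$-invariance are formal, but the intersection of two open sets need not be connected, and this is exactly the delicate geometric content the author defers to the appendix. The hard part will be showing that every point of $\cB$ can be joined to the common core $\KK^\C(x)$ (which lies in both $\cA$ and $-\theta(\cA)$, being the $\theta$-stable Lagrangian section) within $\cB$. My approach would be to use the $\KK^\C$-action to reduce to a slice transverse to the orbits—effectively the fibre direction of the two competing ruling structures—and then to analyze the intersection $\cB$ fibrewise. Because $\cA$ corresponds to a star-shaped (indeed, by Theorem~\ref{thm:canonical}, bounded star-shaped) domain in the fibre and $-\theta(\cA)$ to its reflection, one should be able to show the fibrewise intersection is star-shaped about the origin, hence connected, and then lift this through the $\KK^\C$-equivariant fibration using that $\KK^\C(x)$ is connected. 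Controlling the geometry of these star-shaped fibres quantitatively—tying them back to the explicit potential $h$ and its gradient flow—is the technical heart, which is why it is natural to relegate it to a separate geometric argument.
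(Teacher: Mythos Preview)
Your proposal has two genuine gaps.

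First, the core step---showing that $\Lambda$ extends holomorphically to exactly $\cB$---is not actually carried out. You assert that ``the holomorphic data (the $(1,0)$-form $\zeta=\varsigma$ and its primitive) are holomorphic on all of $\cA$'', but this conflates two different complexifications. The section $\varsigma:\KK^\C(x)\to T^{*1,0}\KK^\C(x)$ is real analytic, not holomorphic (its graph $X^*$ is a real form for $-\theta$, not a complex submanifold), so nothing about $\varsigma$ being defined over all of $\KK^\C(x)$ tells you how far its \emph{complexification in the $-\theta$ direction} extends. That extension is precisely what is in question; it does not follow from anything you wrote that it reaches all of $\cA$, and the hand-wave about ``compatibility with $-\theta$'' does not explain why the obstruction is exactly the complement of $-\theta(\cA)$. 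The paper's route is entirely different: it first reduces the domain of $\Lambda$ to that of $\beta^\C$ via the identity $\varXi-\Lambda=X_{\beta^\C}$ (Lemma~\ref{lem:Liouville-Hamiltonian}), and then constructs the complexification of $\beta$ \emph{geometrically} by passing to the ``quaternionification'' $\gg^{\mathbb H}=\gg^\C\oplus j\gg^\C$. In the hyperbolic orbit $\cO^{\mathbb H}$ one finds two sections $\KK^{\mathbb H}(\tilde x)$ and $\GG^\C(\tilde x)$ of the Iwasawa ruling which are the complexifications of $\KK^\C(x)$ and $X^*$; the subset of the latter that lies over the former is computed explicitly (Lemma~\ref{lem:fixedpoints-quat2}) to be $f_2(\cB)$, and this identification is what pins down the domain. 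None of this apparatus appears in your outline.

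Second, your connectedness argument is both misconceived and unnecessary. You invoke Theorem~\ref{thm:canonical} to say that ``$\cA$ corresponds to a star-shaped \ldots\ domain in the fibre'', but that theorem concerns the image $D$ of $X^*$ in $T^*L$, not $\cA\subset\cO$; the set $\cA=\Pi^{-1}(X^*)$ is the full affine bundle over a Zariski-open base, not a bounded fibre domain, so the star-shaped picture does not apply. The paper's argument is one line: $\cA$ and $-\theta(\cA)$ are Zariski open in the irreducible variety $\cO$ (inverse images of the $\KK^\C$-orbit $X^*$ under the algebraic Iwasawa and opposite-Iwasawa projections), so their intersection $\cB$ is Zariski open and hence connected. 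Your arguments for $\KK^\C$-invariance of $\cB$ and for $\KK^\C$-equivariance of $\Lambda$ by analytic continuation from the $\KK$-equivariance of $Y$ are correct and match the paper.
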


Next, we address the relation of $\Lambda$ with the symplectic form $\Omega$:

\begin{lemma}\label{lem:hol-Liouville} 
 The  vector field $\Lambda$ on $\cB$ is an anti-Liouville vector field for $\Omega$:
 \begin{equation}\label{eq:Liouville}
  \mathcal{L}_\Lambda \Omega=-\Omega.
 \end{equation}
\end{lemma}
\begin{proof}
Because  (\ref{eq:Liouville}) is an equality of holomorphic 2-forms and $X^*$ is a real form of the connected $(\cB,-\theta)$, by analytic continuation
the equality holds if and only if
\[\mathcal{L}_\Lambda \Omega|_{X^*}=-\Omega|_{X^*}.\]
At a given point $y\in X^*$ we need to check an equality of complex linear 2-forms. Upon identifying $T^{1,0}_y\cO$ with $T_y\cO$, 
we are led to prove an equality of complex--valued $J$-complex 2-forms. The equality follows if it holds
for vectors on $T_yX^*$ because this is a real form of $(T_y\cO,J)$. Thus, it is enough to verify that the pullback of (\ref{eq:Liouville})
to $X^*$ holds. The pullback of the right--hand side is the purely imaginary 2-form $i\Im\Omega$;
this also implies that $\Omega$ equals the complexification of $-i\Im\Omega$. Therefore,
the left hand side  of (\ref{eq:Liouville}) is the complexification of $\mathcal{L}_{-Y}-i\Im\Omega$. Hence,  the pullback of (\ref{eq:Liouville}) 
to $X^*$ is
\[\mathcal{L}_Y i\Im\Omega=i\Im\Omega,\]
which holds by (\ref{eq:Kahler-potential}).
\end{proof}

We find it difficult to describe the properties of the flow of $\Omega$ on the whole $\cB$. However, we have a precise 
picture near the compact subset $\KK(x)$ of its zero set: 
\begin{lemma}\label{lem:C-star-action} There exists $\cB'\subset \cB$ a connected $\KK$-invariant open neighborhood of $\KK(x)$
where the (holomorphic) flow of $-\Lambda$ defines an action of the semigroup $\mathbb{D}^*\subset \C^*$ of complex numbers of norm
smaller that one.
 \end{lemma}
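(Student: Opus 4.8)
The plan is to exhibit, in a neighbourhood of $\KK(x)$, a holomorphic conjugation of $-\Lambda$ to a complex Euler vector field, for which the contracting flow is literally fibrewise multiplication by scalars of modulus less than one; the sought $\mathbb{D}^*$-action is then this scaling transported back to $\cO$. I would first pin down the fixed locus of the flow: by item (iii) of Proposition~\ref{pro:potential-properties} the Liouville field $Y$ vanishes exactly along $\KK(x)$, and since $\Lambda$ is the holomorphic extension of $-Y$ it vanishes along the complexification $\KK^\C(x)$ of $\KK(x)$; in particular $\KK(x)$ is a compact subset of the zero set of $\Lambda$, fixed by its flow. Complexifying the transverse normal data computed in the proof of Theorem~\ref{thm:canonical}, the linearization of $-\Lambda$ transverse to $\KK^\C(x)$ has all eigenvalues equal to $1$, so the transverse normal form is the complex Euler field and there are no resonances obstructing a holomorphic linearization.

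Concretely I would complexify the real-analytic diffeomorphism $\psi\colon X^*\to D\subset T^*\KK(x)$ of Theorem~\ref{thm:canonical}, which carries $Y$ to the Euler vector field $E$ and $\KK(x)$ to the zero section. Near the compact zero section $\psi$ extends to a biholomorphism $\psi^{\C}$ from a complex neighbourhood of $\KK(x)$ in $\cO$ onto a complex neighbourhood of the zero section in the complexification of $T^*\KK(x)$, and, complexification of vector fields being functorial, $\psi^{\C}$ intertwines $-\Lambda$ with the complexified Euler field $E^{\C}$. On the Euler side $E^{\C}$ generates the fibrewise scaling $\xi\mapsto e^{w}\xi$, which is defined for every complex time $w$, is $2\pi i$-periodic, and for $\Re w<0$ maps any complex disk-subbundle $V'$ over a neighbourhood of $\KK(x)$ into itself. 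Setting $\cB'=(\psi^{\C})^{-1}(V')$ and $s=e^{w}$, the time-$w$ flow of $-\Lambda$ on $\cB'$ is conjugate to scaling by $s$; by $2\pi i$-periodicity it depends only on $s$, and the flow law $\phi_{w_1}\circ\phi_{w_2}=\phi_{w_1+w_2}$ becomes $s_1\cdot(s_2\cdot p)=(s_1 s_2)\cdot p$. This is exactly an action of the semigroup $\mathbb{D}^*=\{\,s\in\C\mid 0<|s|<1\,\}$, holomorphic in $s$, and contracting toward $\KK(x)$ as $|s|\to 0$.

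For the remaining assertions, $-\Lambda$, $\psi$, $E$ and the scaling are all $\KK$-equivariant, by Theorem~\ref{thm:canonical} and Proposition~\ref{pro:domain-complexification}; hence $V'$, and therefore $\cB'$, may be taken $\KK$-invariant. As $\cB'$ is the image of a disk-subbundle over the connected orbit $\KK(x)$, it is connected, and after shrinking we keep $\cB'\subset\cB$ so that the flow really takes place on the domain furnished by Proposition~\ref{pro:domain-complexification}.

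The main obstacle is the domain control in the second step: the extension $\psi^{\C}$ a priori exists only on some complex neighbourhood of $X^*$ that could shrink along $X^*$, and one must guarantee both that it is defined on a full complex neighbourhood of the whole compact set $\KK(x)$ and that the contracted points $s\cdot p$ stay inside it. Compactness of the zero section yields a uniform complex neighbourhood, while the fibrewise star-shapedness of $D$ from Theorem~\ref{thm:canonical} ensures that scaling by $|s|<1$ never leaves the domain; these two facts are precisely what upgrade the transported scaling from a merely local flow to a genuine $\mathbb{D}^*$-semigroup action.
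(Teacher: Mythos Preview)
Your argument is correct and follows the same route as the paper: complexify the real-analytic diffeomorphism $\psi$ of Theorem~\ref{thm:canonical}, which carries $Y$ to the Euler field, and read off the $\mathbb{D}^*$-semigroup action from fibrewise scaling on the complexified cotangent bundle. The paper's proof records exactly these two features (linearization a projection, flow defined on $\{\Re z\le 0\}$) and invokes $\KK$-equivariance and compactness of $\KK(x)$ for the invariance of $\cB'$, just as you do. Your opening paragraph on resonances and holomorphic normal forms is unnecessary, since you immediately obtain the linearizing chart as the complexification of $\psi$ itself; and your appeal to the star-shapedness of the \emph{real} domain $D$ in the final paragraph is not quite the right justification---what you actually need (and use) is that a small complex disk-subbundle is invariant under scaling by $|s|<1$, which is automatic.
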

\begin{proof}
The complexification of the $\KK$-equivariant real analytic diffeomorphism (\ref{eq:canonical}) takes $-\Lambda$ to the holomorphic Euler vector
field of the complexification of the cotangent bundle $T^{*}\KK(x)$ (a holomorphic vector bundle).
Therefore, on a connected neighborhood $\cB'$ of $\KK(x)$ the vector field $\Lambda$ will share the following qualitative properties of 
the Euler vector field of a holomorphic vector bundle:
\begin{enumerate}[(i)]
 \item It vanishes along $\KK(x)$ and at any point there its intrinsic linearization is a (complex) projection. 
 \item Its flow at any point is defined in a neighborhood of the half plane $\{\Re z\leq 0\}$. 
 The restriction of the flow to $\{\Re z\leq 0\}$ integrates into an action of the semigroup  $\mathbb{D}^*\subset \C^*$.
\end{enumerate}
Finally, because $\KK$ is compact and the diffeomorphism (\ref{eq:canonical}) is $\KK$-equivariant, the connected neighborhood $\cB'$ can be assumed
to be $\KK$-invariant.
\end{proof}

\begin{remark} A (large) model for the complexification of $T^*\KK(x)$ is given by $T^{*1,0}\KK^\C(x)\overset{\chi}{\cong}\cA$ with involution
the cotangent lift of $-\theta$. It turns out that this cotangent lift is $-\theta$ itself acting on $\cA$. 
Therefore, we get an abstract identification 
of $\cB'\subset \cO$ with a subset of $\cA\subset\cO$. 
This identification is not given by the inclusion because $-\Lambda$ does not equal the Euler vector field of $\cA$.
(Lemma \ref{lem:Liouville-Hamiltonian}).
\end{remark}

We now look at how the action of the semigroup $\mathbb{D}^*$ described in Lemma \ref{lem:C-star-action} relates to the real forms
$X$ and $X^\vee$ and to the involutions.

\begin{proposition}\label{pro:action-exchange} The action of $\mathrm{e}^{-\tfrac{i\pi}{2}}\in \mathbb{D}^*$ on $\cB'$
in Lemma \ref{lem:C-star-action} interchanges 
the anti-holomorphic
 involutions $-\theta$ and $\sigma$ and therefore it interchanges their fixed point set $\cB'\cap X$ and $\cB'\cap X^\vee$.
\end{proposition}
\begin{proof} Let $y\in \cB'\cap X^*$ and let $z=\mathrm{e}^{-\tfrac{i\pi}{2}}\cdot y$. We can describe $z$ in a different manner: 
The point $y$ determines a backward 
trajectory of the real vector field $-\Re \Lambda=Y$ with limit point in $\KK(x)$. By $\KK$-invariance of $\Lambda$, we may assume
this point to be $x$. The tangent space to the unstable manifold of $Y$ 
at $x$ is $\Pi_*[i\kk,x]$ --- the orthogonal complement of $T_x\KK(x)$. By \cite[Theorem 3]{Na}, 
the point $y$ determines a unique vector $v\in \Pi_*[i\kk,x]$ such that the real analytic curve 
$y(t)$, $t\in [0,1]$, characterized by
\[ty'(t)=-\Re\Lambda(y(t)),\quad y'(0)=v,\]
satisfies $y(1)=y$. By items (i) and (ii) in the proof of Lemma \ref{lem:C-star-action} 
$z$ equals $z(1)$ for the real analytic curve $z(t)$, $t\in [0,1]$, characterized by:
\[tz'(t)=-\Re\Lambda(z(t)),\quad z'(0)=-Jv.\]
If we write $v=\Pi_*([iu,x])$, $u=\sum_{\alpha\in \Sigma^+(\ss)} u_\alpha+\theta u_\alpha$, $u_\alpha\in \gg_\alpha^\sigma$,
then by (\ref{eq:proj-tg})
\[-Jv= -iv=- i\sum_\alpha\alpha(x) i(u_\alpha+\theta u_\alpha)=-\sum_\alpha\alpha(x)(u_\alpha+\theta u_\alpha)\in \kk\subset \hh.\]
We want to show that $z(t)$ is contained in the real form $X^\vee=\mathrm{H}(x)$. 
Because  $Y$ is $\sigma$-invariant and $\cB$ is connected by analytic continuation  $-\Re\Lambda$ is also $\sigma$-invariant.
Because $\mathrm{H}(x)$ is the fixed point set of $\sigma$, the vector field  $-\Re\Lambda$ at points of the real form $\mathrm{H}(x)$ 
must be tangent to it. Therefore, its backward trajectories determine a submanifold of stable manifold at $x$  whose tangent space at $x$
is a subspace $W\subset T_x\mathrm{H}(x)\subset T_x \cA$. By a dimension count on $\mathrm{H}(x)$ it follows that $W$ must have half of the 
dimension of the tangent space of the stable manifold, which is a complex vector space. 
Because both $Jv$ and $W\subset T_x\mathrm{H}(x)$ are contained in the real form $\hh$, the subspace
spanned by them is totally real. Therefore, $Jv$ must be in $W$. Hence the trajectory $z(t)$ is in $X^\vee$ and so is its endpoint 
$z=\mathrm{e}^{-\tfrac{i\pi}{2}}\cdot y$.

The action of $\mathrm{e}^{-\tfrac{i\pi}{2}}$ on $\cB'\cap X^\vee$ is the action of
$\mathrm{e}^{-i\pi}=-1$ on $\cB'\cap X$. By appealing again to \cite[Theorem 3]{Na} or to Theorem \ref{thm:canonical}, the latter action is exactly that of the involution $\sigma$ on $\cB'\cap X$.

By the previous results, the conjugation of $-\theta$ and $\sigma$ by the action of $\mathrm{e}^{-\tfrac{i\pi}{2}}$ are anti-holomorphic involutions whose fixed point set is $\cB'\cap X^\vee$ and $\cB'\cap X$, respectively (and they are defined
on a connected neighborhood of their fixed point set). But
two anti-holomorphic involutions with equal fixed point set must be equal, as their composition is a holomorphic automorphism
which is the identity on a real form. Therefore, $-\theta$ and $\sigma$ are exchanged upon conjugation by the action of $\mathrm{e}^{-\tfrac{i\pi}{2}}$.
\end{proof}

\begin{remark}\label{rem:hol-disk} For any point $z\in \cB'$, the linear projection $\gg^\C\to \kk^\C$ 
identifies the compactification of the orbit $\mathbb{D}^*\cdot z$ 
with a holomorphic disk (perhaps of small radius). Under this identification,
$-\theta$ and $\sigma$ become the reflections on the real and imaginary axis, respectively. These holomorphic disks are the appropriate
non-linear lifts of the holomorphic disks associated to the Euler vector field of $\kk^\C$.
\end{remark}

We have all the ingredients to prove the symplectic correspondence between compact orbits and hyperbolic orbits:

\begin{proof}[Proof of Theorem \ref{thm:duality}]
 
The real part of  $\Lambda$ is tangent to both $X^*$ and $X^\vee$.  On $X^*$ it coincides
with $-Y$, which is complete for positive time. We shall argue that $\Re\Lambda$ is everywhere defined on $X^\vee$, and that it is complete there.

The vector field $\Lambda$ is defined on $\cB=\cA\cap -\theta(\cA)$. Let $z\in X^\vee=\cO^\sigma$. 
If $z\notin \cB$, then it belongs to an opposite Iwasawa fiber over a point in $X\backslash X^*$. Because the involution $\sigma$ preserves the Iwasawa
and opposite Iwasawa fibrations and acts freely on $X\backslash X^*$,
$\sigma(z)=z$ implies that $z$ would belong to two different fibers of the opposite 
Iwasawa fibration, which is not possible.  

By Theorem \ref{thm:canonical} and analytic continuation
the complex linear projection $\cB\subset \gg^\C\to \kk^\C$ relates $\Lambda$ to minus the holomorphic Euler vector field. The orbit 
$X^\vee$ is contained in the preimage of $\kk\subset \kk^\C$. Therefore,  the restriction of the linear projection  
$X^\vee\subset \kk\oplus \ss\to \kk$  relates $\Re\Lambda$ to minus the Euler vector field. Because the latter map is proper (it is the momentum
map for the action of $\KK$)
and the Euler vector field
is complete, we conclude that $\Re\Lambda|_{X^\vee}$ is complete. 

We now proceed to define the map $\Psi$ in (\ref{eq:duality}).  Given $y\in X^*$ because $-Y$ is complete for positive time,
there exists $t_y\geq 0$  such that the flow of $\Re\Lambda$ for any time greater than $t_y$ takes
$y$ into $\cB'\cap X^*$. We choose any $t>t_y$ and apply the flow map of $\Re\Lambda$. Next,  since we are in $\cB'$, 
we can let $-i\tfrac{\pi}{2}$ act on this point as defined in Lemma \ref{lem:C-star-action}; this action is the flow map of $\Im\Lambda$
for time $i\tfrac{\pi}{2}$. Finally, because by Proposition \ref{pro:action-exchange} the resulting point is in $X^\vee$, by the previous
paragraph we can apply to 
it the flow map of $\Re\Lambda$ for time $-t$. 

The point $\Psi(y)$ does not depend on the choice of $t>t_y$. By Lemma \ref{lem:C-star-action}
at points in $\cB'$, the flow of $\Lambda$ is defined in the positive half plane (and thus the flows of $\Re\Lambda$ and $\Im\Lambda$ for the corresponding
times commute). In particular, at $y\in \cB'$ the definition of $\Psi$ is the Wick rotation given by the flow of $\Lambda$ for time $i\tfrac{\pi}{2}$.
By elementary ordinary differential equations (ODEs) theory in a neighborhood of a fixed $y\in X^*$, we can take a common time $t>0$. This implies that $\Psi$ is a real analytic
local diffeomorphism. If $y,y'\in X^*$ are different points, then we can always find a common flow time $t>0$. Then, $\Psi$ for both points becomes
the composition of the same three injective maps, and thus the images differ. If we denote by $D^\vee\subset X^\vee$ the image of $\Psi$,
we conclude that $\Psi:X^*\to D^\vee$ is a real analytic diffeomorphism.

By Proposition \ref{pro:action-exchange} the flow of $\Lambda$ for time $i\tfrac{\pi}{2}$ on $\cB'$ takes $\sigma$  to $-\theta$.
Because $\Psi$, $\sigma$, and $-\theta$ are real analytic, by analytic continuation $\Psi:X^*\to D^\vee$ must intertwine the involutions everywhere.

By Lemma \ref{lem:hol-Liouville} $\Lambda$ is anti-Liouville for $\Omega$. Therefore, on $\cB'$ its  flow for time $i\tfrac{\pi}{2}$  pulls backs 
$\Omega$ to $-i\Omega$, and thus $\Re\Omega$ to $-\Im\Omega$. Because $\Psi$ is a real analytic map and $\Omega$ is a holomorphic 2-form, once more by analytic continuation, $\Psi:X^*\to D^\vee$ must pull back $\Re\Omega$ to $-\Im\Omega$ everywhere.  
\end{proof}

\begin{example}\label{ex:sl(2C)} We let $x\in X\subset \SL(2,\C)$ as in Example \ref{ex:su(2)}.
The points in $X\backslash X^*$ are 
\[\pm \begin{pmatrix} 0 & -i\\ i & 0\end{pmatrix}=\frac{\sqrt{2}}{2}\begin{pmatrix} 1 & \pm i\\ \pm i & 1\end{pmatrix}\cdot x.\]
Hence, the vector part of the positive and negative Iwasawa fibers over them are:
\begin{equation}\label{eq:Iwasawa-fibers}
 \zeta\left(\begin{pmatrix} 0 & 1\\ 1 & 0\end{pmatrix}\mp \begin{pmatrix} 1 & 0\\ 0 & -1\end{pmatrix}\right),\,\,\zeta\in \C,
\quad\ 
 \zeta\left(\begin{pmatrix} 0 & 1\\ 1 & 0\end{pmatrix}\pm \begin{pmatrix} 1 & 0\\ 0 & -1\end{pmatrix}\right)\,\,\zeta\in \C.
\end{equation}
Let  $e,f,z$ be coordinates on $i\mathfrak{su}(2)=\pp\oplus i\kk$ in the basis of (cyclically) permuted Pauli matrices and 
let $E,F,Z$ be the complexified coordinates on $\mathfrak{sl}(2,\C)$. By (\ref{eq:Iwasawa-fibers}), the vector part of the
positive and negative Iwasawa fibers over $X\backslash X^*$
are the two lines of the quadric $\{E^2+F^2=0\}\subset \pp\oplus i\pp$.

The parameterization (\ref{eq:hyp-coord}) sends $\frac{1}{2}\sinh(2b)\cosh(2b)\frac{\partial}{\partial b}$ to
\[Y=-\frac{ez^2}{e^2+f^2}\frac{\partial}{\partial e}-\frac{fz^2}{e^2+f^2}\frac{\partial}{\partial f}+z\frac{\partial}{\partial z}.\]
The formula above is valid for every $\SU(2)$-orbit in $i\mathfrak{su}(2)$. The complexification of $Y$ is the vector field 
\begin{equation}\label{eq:complexified-flow-su(2)}
-\Lambda=-\frac{EZ^2}{E^2+F^2}\frac{\partial}{\partial E}-\frac{FZ^2}{E^2+F^2}\frac{\partial}{\partial F}+Z\frac{\partial}{\partial Z}.
\end{equation}
It is defined away from the locus $\{E^2+F^2=0\}\subset \mathfrak{sl}(2,\C)$. The points of $X\backslash X^*$ correspond to  $Z=\pm 1$.
Therefore,  on $\cO=\SL(2,\C)(x)$ the vector field $-\Lambda$ is defined exactly 
in the complement of the Iwasawa and opposite Iwasawa fibers over $X\backslash X^*$. This is the subset $\cB\subset \cO$,
so the result is consistent with Proposition \ref{pro:domain-complexification}.

Equation (\ref{eq:complexified-flow-su(2)}) yields elementary O.D.E.'s for the third component of the flow
and for the sum of the squares of the first and second components of the flow. Thus,  the flow of  $-\Lambda$ for time $w\in \C$ is given by
\[\Phi_w=\left(\left(1+\frac{Z^2}{E^2+F^2}(1-\mathrm{e}^{2w})\right)^{1/2}E,
\left(1+\frac{Z^2}{E^2+F^2}(1-\mathrm{e}^{2w})\right)^{1/2}F,\mathrm{e}^wZ
\right),\]
where the branches of the square root in the first and second components are the standard ones. The flow preserves the Killing form
of $\mathfrak{sl}(2,\C)$. Thus, it evolves along adjoint orbits. 
The trajectories starting at $X^*\subset \cB$ correspond to real values $(e,f,z)$. Because the equation 
\[1+\frac{z^2}{e^2+f^2}(1-\mathrm{e}^{2w})=0\]
has no solutions for $\Re w\leq 0$, we conclude that the flow $\Phi$ at points in $X^*$ is defined in the negative half plane $\{\Re w\leq 0\}$.
This is consistent with Lemma \ref{lem:C-star-action} and Proposition \ref{pro:action-exchange}. In fact in this case $\Phi$ is the Wick-type rotation,
as Lemma \ref{lem:C-star-action} is valid in the whole open sense subset $\cB\subset \cO$. The diffeomorphism
\[\Psi(e,f,z)=\Phi_{-i\tfrac{\pi}{2}}(e,f,z)=\left(\left(1+\frac{2z^2}{e^2+f^2}\right)^{1/2}e,
\left(1+\frac{2z^2}{e^2+f^2}\right)^{1/2}f,-iz\right)\]
takes $i\mathfrak{su}(2)=\pp\oplus i\kk$ to $\mathfrak{sl}(2,\R)=\pp\oplus \kk$ therefore sending 
$X^*\subset \{e^2+f^2+z^2=1\}$  into the hyperboloid 
$\{e^2+f^2+(iz^2)=1\}=\SL(2,\R)(x)\subset \mathfrak{sl}(2,\R)$.
One may rewrite 
\begin{equation}\label{eq:spherical-to-hyperbolic}
 \Psi(e,0,z)=(\cosh(\ln(z)+\sqrt{z^2+1}),0,z).
\end{equation}

\end{example}
 
\begin{remark} The real hyperbolic orbit $X^\vee$ of the non-compact semisimple Lie algebra $\hh$ is canonically symplectomorphic to 
the cotangent bundle over the Lagrangian $L$, which is its submanifold of real flags 
(the intersection with the -1-eigen-space in the Cartan decomposition) \cite{Mt}:
\[\Xi: (X^\vee, \Re\Omega)\to (T^*L,d\lambda).\]
The Iwasawa ruling together with twice the Killing form identify the orbit with the cotangent bundle of the submanifold of real flags. 
With this identification, the symplectomorphism $\Xi$ is the identity. The reason is that the Euler vector field
is a (complete) Liouville vector field for the KKS symplectic form.

The composition of the symplectomorphism $\psi^{-1}$ and $\Psi$ in Theorems \ref{thm:canonical} and \ref{thm:duality} with $\Xi $
is a $\KK$-equivariant symplectomorphism
\[\Upsilon:(D\subset T^*L,d\lambda)\to (\Psi(D^\vee)\subset T^*L,d\lambda)\]
which is the identity on the zero section. 
In the case of $X\subset i\mathfrak{su}(2)$ discussed in Examples \ref{ex:su(2)} and  \ref{ex:sl(2C)} we have
 $\Upsilon(D)=D$. However, the corresponding automorphism $\Upsilon$ is not the identity. Equation (\ref{eq:spherical-to-hyperbolic})
shows that a trajectory of the vector field $Y$ is not sent to a trajectory of the Euler vector field (the image of the trajectory is not in a Iwasawa
fiber of the hyperboloid).
\end{remark}

\section{Appendix: The domain of the complexified Liouville vector field}\label{sec:appendix}

For a given symplectic form, the set of Liouville vector fields is  an affine space whose vector space are symplectic vector fields.
If we denote by $\varXi$  the Euler vector field of $\cA$ (rather, the image of the Euler vector field by $\chi$),
then the difference $\varXi-\Lambda$ must be a symplectic vector field. The diffeomorphism
given by the Iwasawa projection identifies the closed 1-form $\beta\in \Omega^1(\KK^\C(x))$
with a real analytic 1-form on $X^*$, which we still denote by $\beta$. Because the latter is an open subset of a real form for $(\cO,-\theta)$,
it admits a complexification $\beta^\C$, which determines a symplectic vector field $X_{\beta^\C}$:
\[\iota_{X_{\beta^\C}}\Omega=\beta^\C.\]

\begin{lemma}\label{lem:Liouville-Hamiltonian} The  difference of the Euler vector field and $\Lambda$ 
is the symplectic vector field determined by $\beta^\C$:
\begin{equation}\label{eq:Liouville-Hamiltonian}
 \varXi-\Lambda=X_{\beta^\C}.
\end{equation}
The equality is valid on any connected open neighborhood of $X^*$ in $\cA$. 
 \end{lemma}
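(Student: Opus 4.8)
The plan is to convert the asserted identity of vector fields into an identity of holomorphic $1$-forms, and then to establish that identity on the totally real slice $X^*$ and propagate it by analytic continuation. As observed at the start of this appendix, $\varXi$ and $\Lambda$ are Liouville vector fields for $\Omega$, so their difference is symplectic; equivalently $\iota_{\varXi-\Lambda}\Omega$ is a closed holomorphic $1$-form on the relevant neighborhood of $X^*$ in $\cA$. By construction $X_{\beta^\C}$ is symplectic with $\iota_{X_{\beta^\C}}\Omega=\beta^\C$, again closed. Since $\Omega$ is non-degenerate, proving $\varXi-\Lambda=X_{\beta^\C}$ is equivalent to proving the equality of holomorphic $1$-forms $\iota_{\varXi-\Lambda}\Omega=\beta^\C$.

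Both sides are holomorphic $1$-forms on a connected neighborhood of $X^*$, and $X^*$ is a real form of $(\cB,-\theta)$, hence maximally real. As in the proof of Lemma~\ref{lem:hol-Liouville}, a holomorphic $1$-form is determined by its pullback to a maximally real submanifold: the restriction to $TX^*$ already pins down the $\C$-linear form pointwise, and the identity theorem then propagates the equality over the whole connected neighborhood. Thus it suffices to check that the pullbacks to $X^*$ coincide, i.e. that $\iota_{\varXi-\Lambda}\Omega|_{X^*}=\beta^\C|_{X^*}=\beta$; the stated validity on any connected open neighborhood then follows automatically from this identity-theorem step.

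To evaluate the left-hand side I would split it as $\iota_\varXi\Omega-\iota_\Lambda\Omega$. The Euler field of $\cA\cong T^{*1,0}\KK^\C(x)$ contributes the holomorphic tautological $1$-form, $\iota_\varXi\Omega=\lambda$; since by Proposition~\ref{pro:LS} the submanifold $X^*$ is the graph of the LS section $\varsigma$, the pullback $\lambda|_{X^*}$ recovers $\varsigma$, whose real part under the identification $T^{*1,0}\KK^\C(x)\cong T^*\KK^\C(x)$ is exactly $\beta$. For the second term, on $X^*$ one has $\Lambda=-Y$ with $Y$ tangent to $X^*$, so $\iota_\Lambda\Omega|_{X^*}=\iota_{-Y}(\Omega|_{X^*})$; since $X^*$ is Lagrangian for $\Re\Omega$ by Proposition~\ref{pro:fixedpoints-sections}(i), the form $\Omega|_{X^*}$ equals the complexification of $-i\Im\Omega$ (as in the proof of Lemma~\ref{lem:hol-Liouville}), so this contribution is purely imaginary and proportional to $\iota_Y\Im\Omega$. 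Because $Y$ is half the gradient of the Kähler potential $h$ with $-\Im\Omega=i\partial\bar\partial h$ as in (\ref{eq:Kahler-potential}), the LS relations of Proposition~\ref{pro:LS} identify $\iota_Y\Im\Omega$ with the imaginary part $\Im\varsigma$ of the section (the $J$-twist of $\beta$), so that in the combination the purely imaginary pieces cancel and the surviving real $1$-form is precisely $\beta$, as required.

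The main obstacle is this last bookkeeping of real and imaginary parts: I must track carefully how the holomorphic tautological form $\lambda$, the complex section $\varsigma$, its real part $\beta$, and the contraction $\iota_Y\Im\Omega$ fit together through the Kähler potential, so that the imaginary contributions cancel and the real contribution is exactly $\beta$. This is where the LS-section formalism and (\ref{eq:Kahler-potential}) enter essentially; by contrast, the reduction to an equality of closed holomorphic $1$-forms and the analytic-continuation step from the maximally real $X^*$ are routine.
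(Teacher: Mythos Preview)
Your plan is correct and follows essentially the same route as the paper: reduce the vector-field identity to the $1$-form identity $\lambda|_{X^*}-\iota_\Lambda\Omega|_{X^*}=\beta$, test it on vectors tangent to the totally real $X^*$, and invoke analytic continuation. The paper then carries out precisely the bookkeeping you flag as the main obstacle, writing $\lambda|_{T_yX^*}=\beta_y-i\beta_y\circ\Pi_*\circ J$ and $\iota_\Lambda\Omega|_{T_yX^*}=-i\beta_y\circ\Pi_*\circ J$ so that the imaginary parts cancel to leave $\beta_y$; your sketch of this cancellation via the LS section $\varsigma$ and the K\"ahler potential is the right heuristic, and becomes the paper's computation once made explicit.
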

\begin{proof}
 Because $X^*\subset X$ is an open subset of a real form of $\cO$, it suffices to prove (\ref{eq:Liouville-Hamiltonian})
 the equality in points of  $X^*$:
 \[ (\varXi-\Lambda)|_{X^*}={X_{\beta^\C}}|_{X^*}.\]
 This equality is equivalent to the one obtained by taking contraction with $\Omega$,
 \[\iota_{\varXi-\Lambda}\Omega|_{X^*}=\iota_{X_{\beta^\C}}\Omega|_{X^*},\]
 which leads us to proving the equality of holomorphic 1-forms:
 \[\lambda|_{X^*}-\iota_{\Lambda}\Omega|_{X^*}=\beta^\C|_{X^*},\]
 where $\lambda$ is the tautological 1-form of the cotangent bundle. It is enough to test the 1-forms on 
 vectors tangent to $X^*$.
 At a point $y\in X^*$, the right--hand side returns $\beta_y$.
 The expression of the tautological 1-form at $y\in X^*$ is obtained by regarding the point as a complex 1-form  
 via the identification of  $\cA$ with $T^{*1,0}\KK^\C(x)$:
 \[\lambda_y=\beta_y-i\beta_y\circ J.\] 
 Because the tautological 1-form vanishes along fibers of the cotangent bundle, we have:
 \[\lambda|_{T_yX^*}=\beta_y-i\beta_y\circ \Pi_*\circ J.\]
 Because $\iota_\Lambda \Omega$ is the complexification of $\iota_Y-i\Im\Omega$:
 \[\iota_\Lambda\Omega|_{T_yX^*}=-\iota_Yi\Im\Omega|_{T_yX^*}=i\iota_Y-\Im\Omega=-i\beta_y\circ \Pi_*\circ J.\]
 Hence, the left--hand side gives:
 \[\lambda|_{T_yX^*}-\iota_\Lambda\Omega|_{T_yX^*}=\beta_y-i\beta_y\circ \Pi_*\circ J+i\beta_y\circ \Pi_*\circ J
 =\beta_y,\] 
 which proves the equality.
\end{proof} 

By (\ref{eq:Liouville-Hamiltonian}) the (connected) domain of definition  of $\Lambda$ and $\beta^\C$ in $\cA$ is the same.
The 1-form $\beta\in \Omega^1(\KK^\C(x))$ is constructed geometrically as the section of $\Pi:T^*\KK^\C(x)\to \KK^\C(x)$ determined by $X^*$. 
Our purpose is to complexify the previous geometric construction.

Let $\gg^\mathbb{H}=\gg^\C\oplus j\gg^\C$ be the complexification of  $\gg^\C$ and let 
$\GG^\mathbb{H}$
denote its simply connected integration.
There is an isomorphism of complex Lie algebras:
\begin{equation}\label{eq:iso-quat}
f_1:\gg^\mathbb{H}\to \gg^\C\times \gg^\C,\quad u+jv\mapsto (u+iv,\theta(u-iv)). 
\end{equation}
More precisely: $u_\gg+u_{i\gg}+jv_\gg+jv_{i\gg}\mapsto (u_\gg+iv_{i\gg}+iv_{\gg}+u_{i\gg},u_\gg-iv_{i\gg}-u_{i\gg}+iv_{\gg})$.
On $\gg^\mathbb{H}$ the complexifications $\theta^\C$ and $\sigma^\C$  are commuting holomorphic involutions.
Let $\tilde{x}=-jix\in j\gg$. Because it is fixed by $\theta^\C$ and $-\sigma^\C$
the involutions act on the orbit $\GG^\mathbb{H}(\tilde{x})$, which we denote by $\cO^\mathbb{H}$. Their respective fixed point sets
are better understood by looking at their images by $f_1$ in the product Lie algebra. To do this analysis, we need to introduce
subalgebras, subgroups, and orbits on both sides of (\ref{eq:iso-quat}).
As for the left--hand side,  we denote by $\KK^\C$, $\GG^\C$, and $\KK^\mathbb{H}$
the complex subgroups of $\GG^{\mathbb{H}}$ that integrate the subalgebras 
\begin{equation}\label{eq:many-subalgebras}
  \kk\oplus j\kk,\quad \gg\oplus j\gg, \quad \kk\oplus i\kk\oplus j\kk\oplus ji\kk.
\end{equation}
 In the product Lie algebra $\gg^\C\times \gg^\C$ and in its product integration,
we use the subindex $\Delta$ to refer to diagonal subalgebras and subgroups; the subindex $\Delta^\theta$ describes the image of diagonal subalgebras
and subgroups by the involution that is the identity on the first factor and $\theta$ on the second factor.
We use the same notation for the Lie algebra isomorphism $f_1$ 
and for its integration.

The following lemma contains straightforward computations: 
\begin{lemma}\label{lem:intertwine} The isomorphism $f_1$ in (\ref{eq:iso-quat}) has the following properties: 
\begin{enumerate}[(i)]
 \item It identifies the subgroups  $\KK^\C$, $\GG^\C$ and $\KK^\mathbb{H}$ with $\KK^\C_\Delta$, $\GG^\C_\Delta$, and $\KK^\C\times \KK^\C$.
 \item It takes $\cO^\mathbb{H}$ to the $\GG^\C\times \GG^\C$-orbit of $(x,x)$.
  \item It intertwines  $\theta^\C$ and the transposition of factors.
\end{enumerate} 
\end{lemma}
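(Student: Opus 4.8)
The plan is to carry out all three verifications at the level of Lie algebras and then integrate, using that $\GG^{\mathbb{H}}$ is simply connected and that $\GG^\C\times\GG^\C$ is the simply connected integration of $\gg^\C\times\gg^\C$. Under these hypotheses $f_1$ of (\ref{eq:iso-quat}) integrates to a group isomorphism $F_1$ which carries the connected subgroup integrating a subalgebra $\mathfrak{a}$ to the connected subgroup integrating $f_1(\mathfrak{a})$, and which intertwines the adjoint actions; so each group-level statement in the lemma follows once the corresponding subalgebra identity is checked. Throughout, the single point demanding care is the bookkeeping of the two complex structures: $i$ is the complex structure of $\gg^\C$ for which $\theta$ is \emph{antilinear}, i.e. $\theta(iw)=-i\theta(w)$, whereas $j$ is the complex structure making $\gg^{\mathbb{H}}$ a complex Lie algebra and for which $\theta^\C$ is \emph{holomorphic}; the target $\gg^\C\times\gg^\C$ carries its $i$-complex structure, and $f_1$ intertwines $j$ on the source with $i$ on the target. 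I would record the identity $\theta(iw)=-i\theta(w)$ at the outset and reuse it in all three parts.

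For item (i) I would evaluate $f_1$ on a general element $u+jv$ of each subalgebra in (\ref{eq:many-subalgebras}). When $u,v\in\kk$ or $u,v\in\gg$ both lie in the fixed set of $\theta$, so $\theta(u-iv)=u+iv$ and hence $f_1(u+jv)=(u+iv,\,u+iv)$ lands on the diagonal, with $u+iv$ sweeping out $\kk^\C$ respectively $\gg^\C$; this gives $f_1(\kk\oplus j\kk)=\kk^\C_\Delta$ and $f_1(\gg\oplus j\gg)=\gg^\C_\Delta$. For $\KK^{\mathbb{H}}$, whose algebra is $\kk^\C\oplus j\kk^\C$, I would instead solve the linear system $u+iv=a$, $u-iv=\theta(b)$ for a prescribed target $(a,b)\in\kk^\C\times\kk^\C$, obtaining $u=\tfrac12(a+\theta(b))$ and $v=-\tfrac{i}{2}(a-\theta(b))$, both in $\kk^\C$ since $\theta$ preserves $\kk^\C$; this exhibits $(u,v)\mapsto(u+iv,\theta(u-iv))$ as a surjection onto $\kk^\C\times\kk^\C$, whence $f_1(\kk^\C\oplus j\kk^\C)=\kk^\C\times\kk^\C$.

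For item (ii) I would first compute the image of the base point. Writing $\tilde x=j(-ix)$ with $-ix\in\gg$, the two components of $f_1(\tilde x)$ are $u+iv=x$ and $\theta(u-iv)=\theta(-x)=-\theta(x)$; since $x\in i\gg$ forces $\theta(x)=-x$, this gives $f_1(\tilde x)=(x,x)$. Equivariance of $F_1$ under the adjoint action then identifies $f_1(\cO^{\mathbb{H}})=f_1(\GG^{\mathbb{H}}(\tilde x))$ with $(\GG^\C\times\GG^\C)(x,x)$, as claimed. Item (iii) is a direct computation: applying $f_1$ to $\theta^\C(u+jv)=\theta(u)+j\theta(v)$ and simplifying with $\theta^2=\mathrm{id}$ and $\theta(iw)=-i\theta(w)$ yields $f_1(\theta^\C(u+jv))=(\theta(u-iv),\,u+iv)$, which is precisely the transposition of $f_1(u+jv)=(u+iv,\theta(u-iv))$.

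Since every step reduces to a short linear computation, there is no genuine analytic obstacle here; the only place an error could creep in is the handling of the two imaginary units and the (anti)linearity of $\theta$, which is why I would fix that convention before starting and lean on the single identity $\theta(iw)=-i\theta(w)$ uniformly. Finally, I would note once that the passage from the subalgebra identities to the subgroup and orbit statements is legitimate because $f_1$ integrates to an isomorphism of the relevant simply connected groups, which carries connected integrations to connected integrations and intertwines adjoint orbits.
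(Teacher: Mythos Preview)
Your proposal is correct. The paper omits the proof entirely, labeling the lemma as containing ``straightforward computations''; your write-up supplies precisely those computations, and the key bookkeeping point you flag---that $\theta$ is $i$-antilinear (so $\theta(iw)=-i\theta(w)$) while $\theta^\C$ is $j$-linear---is exactly what makes the verifications go through.
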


On the semisimple product Lie algebra and Lie group, we fix the `anti-diagonal' Iwasawa decomposition:
\[\GG^\C\times \GG^\C=\GG \mathrm{AN}\times \GG \mathrm{A}\overline{\mathrm{N}}.\]
Via the isomorphism $f_1$  in (\ref{eq:iso-quat}), we induce an Iwasawa decomposition for $\GG^\mathbb{H}$ 
whose nilpotent Lie algebra we denote by $\nn^\mathbb{H}$.
By item (ii) in Lemma \ref{lem:intertwine} $\tilde{x}$
corresponds to a hyperbolic element. Thus, $\cO^\mathbb{H}$ supports the corresponding Iwasawa ruling (and its opposite one).

\begin{lemma}\label{lem:fixedpoints-quat1} The fixed point set of $-\theta^\C\sigma^\C$  on $\cO^\mathbb{H}$ is the orbit
$\KK^\mathbb{H}(\tilde{x})$ and it is a Lagrangian section of the Iwasawa rulings. 
\end{lemma}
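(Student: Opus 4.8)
The plan is to transport the whole statement through the isomorphism $f_1$ of (\ref{eq:iso-quat}) to the product algebra $\gg^\C\times\gg^\C$, where it decouples into two copies of Proposition \ref{pro:fixedpoints-sections}(ii). By Lemma \ref{lem:intertwine} the isomorphism $f_1$ carries $\KK^\mathbb{H}$ to $\KK^\C\times\KK^\C$, carries $\cO^\mathbb{H}$ to the $\GG^\C\times\GG^\C$-orbit of $(x,x)$ -- which is simply $\cO\times\cO$ -- and, since $x\in\ss$ gives $\theta(x)=-x$, carries the base point $\tilde{x}=-jix$ to $(x,x)$. Consequently $f_1$ identifies $\KK^\mathbb{H}(\tilde{x})$ with $\KK^\C(x)\times\KK^\C(x)$, and it remains to analyse on the product side the fixed point set of the opposite involution $-\theta^\C\sigma^\C$, the induced Iwasawa ruling, and the holomorphic KKS form.

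First I would pin down $-\theta^\C\sigma^\C$ after transport. Since complexification is functorial, $\theta^\C\sigma^\C=(\theta\sigma)^\C$ is the $j$-complexification of the holomorphic involution $\theta\sigma$ whose fixed algebra is $\kk^\C$; its fixed algebra in $\gg^\mathbb{H}$ is therefore $\kk^\C\oplus j\kk^\C$, which is exactly the algebra (\ref{eq:many-subalgebras}) of $\KK^\mathbb{H}$. Using $f_1\theta^\C f_1^{-1}=$(transposition) from Lemma \ref{lem:intertwine}(iii) together with the anti-holomorphy of $\sigma$ and $\sigma\theta=\theta\sigma$, a short computation gives $f_1\sigma^\C f_1^{-1}(a,b)=(\theta\sigma(b),\theta\sigma(a))$, whence $f_1\theta^\C\sigma^\C f_1^{-1}$ is the factorwise involution $(\theta\sigma)\times(\theta\sigma)$ and the opposite $-\theta^\C\sigma^\C$ becomes $(-\theta\sigma)\times(-\theta\sigma)$. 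Its fixed point set on $\cO\times\cO$ is therefore $\cO^{-\theta\sigma}\times\cO^{-\theta\sigma}$, which by Proposition \ref{pro:fixedpoints-sections}(ii) equals $\KK^\C(x)\times\KK^\C(x)$. This matches $f_1(\KK^\mathbb{H}(\tilde{x}))$ exactly, proving that the fixed point set is precisely the orbit $\KK^\mathbb{H}(\tilde{x})$.

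For the section and Lagrangian properties I would again work factorwise. The anti-diagonal Iwasawa decomposition $\GG^\C\times\GG^\C=\GG\mathrm{AN}\times\GG\mathrm{A}\overline{\mathrm{N}}$ induces on $\cO\times\cO$ the product of the Iwasawa ruling on the first factor and the opposite Iwasawa ruling on the second. Proposition \ref{pro:fixedpoints-sections}(ii) shows $\KK^\C(x)$ is a section to the Iwasawa projection; its proof is symmetric in $\nn$ and $\overline{\nn}$ (it only uses $\nn\cap\overline{\nn}=\{0\}$ and that $\theta\sigma$ swaps the two rulings), so the same argument -- equivalently, Proposition \ref{pro:fixedpoints-sections}(ii) applied to the reversed root ordering -- shows $\KK^\C(x)$ is a section to the opposite Iwasawa projection as well. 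Hence $\KK^\C(x)\times\KK^\C(x)$ is a section to the product ruling, and transporting back along $f_1$, $\KK^\mathbb{H}(\tilde{x})$ is a section to the Iwasawa ruling of $\cO^\mathbb{H}$. For the Lagrangian property, $f_1$ is an isomorphism of complex Lie algebras and so preserves Killing forms, the Killing form of $\gg^\mathbb{H}$ going to the orthogonal sum of the two factor Killing forms; thus the holomorphic KKS form $\Omega$ on $\cO^\mathbb{H}$ corresponds to the product form $\Omega\oplus\Omega$ on $\cO\times\cO$. Since $\KK^\C(x)$ is Lagrangian in each factor by Proposition \ref{pro:fixedpoints-sections}(ii), the product $\KK^\C(x)\times\KK^\C(x)$ is Lagrangian, and therefore so is $\KK^\mathbb{H}(\tilde{x})$.

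The main obstacle I anticipate is bookkeeping rather than conceptual: correctly transporting $\sigma^\C$, and hence the opposite involution $-\theta^\C\sigma^\C$, through $f_1$ -- this is where the anti-holomorphy of $\sigma$ and the commutation $\sigma\theta=\theta\sigma$ genuinely enter -- and matching the two halves of the anti-diagonal decomposition to the Iwasawa and opposite-Iwasawa sections of Proposition \ref{pro:fixedpoints-sections}. Once the factorwise identification $-\theta^\C\sigma^\C\leftrightarrow(-\theta\sigma)\times(-\theta\sigma)$ and the Killing-form bookkeeping $\Omega\leftrightarrow\Omega\oplus\Omega$ are in place, all three assertions reduce to the already-established Proposition \ref{pro:fixedpoints-sections}(ii) applied to each factor.
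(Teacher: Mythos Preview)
Your argument is correct. It differs from the paper's own proof, which is a one-line reduction: the paper simply observes that $(\GG^{\mathbb H},\theta^{\C}\sigma^{\C},\tilde x)$ is another instance of the abstract set-up behind Proposition~\ref{pro:fixedpoints-sections}(ii) (a simply connected complex semisimple group, a holomorphic involution whose fixed subgroup is spherical, and a hyperbolic element fixed by the opposite involution), so the same proof runs verbatim, with the adjustment of Section~\ref{ssec:involutions}(iii) to cover the non-split situation.

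Your route is instead to push everything through $f_1$ to $\gg^{\C}\times\gg^{\C}$, compute that $-\theta^{\C}\sigma^{\C}$ becomes the factorwise $(-\theta\sigma)\times(-\theta\sigma)$, and then invoke two literal copies of the already-established Proposition~\ref{pro:fixedpoints-sections}(ii) (one for each ruling in the anti-diagonal Iwasawa decomposition). This is more explicit: it avoids having to re-verify in the quaternionic setting the ingredients that the paper's proof implicitly re-uses (closedness of the spherical orbit, the DKV-type dimension equality (\ref{eq:equality-of-real-forms}), the exchange of the two rulings). The price is the bookkeeping you flag --- transporting $\sigma^{\C}$ correctly through $f_1$ and matching the Killing forms --- which you carry out accurately. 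The paper's approach is terser and emphasizes that the lemma is a formal instance of the general pattern; yours makes that instance concrete by decoupling it into the two factors.
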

\begin{proof} The fixed point set of $\theta^\C\sigma^\C$ on $\GG^\mathbb{H}$ is a spherical subgroup and  $\tilde{x}$ is a hyperbolic element fixed by 
$-\theta^\C\sigma^\C$. Therefore, 
the result is proved exactly as item (ii) in Proposition \ref{pro:fixedpoints-sections} complemented
by item (4) in Section \ref{ssec:involutions}.
\end{proof}

The isomorphism of complex Lie algebras 
\begin{equation}\label{iso-complex}
      f_2:\gg^\C\to \gg\oplus j\gg\subset \gg^\mathbb{H},\quad u+iv\mapsto u+jv,                                   
\end{equation}
maps $x$ to $\tilde{x}$ and restricts to an equivariant biholomorphism
from $\cO$ to $\GG^\C(\tilde{x})\subset \cO^\mathbb{H}$.

\begin{lemma}\label{lem:fixedpoints-quat2} 
The fixed point set of $\theta^\C$  on $\cO^\mathbb{H}$
is the orbit $\GG^\C(\tilde{x})$ and it is a section of the Iwasawa ruling. The subset of points of $\GG^\C(\tilde{x})$ that the Iwasawa map 
relates to points in the Lagrangian $\KK^\mathbb{H}(\tilde{x})$ equals $f_2(\cB)$.
\end{lemma}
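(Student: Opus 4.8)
The plan is to transport the whole configuration to the product $\gg^\C\times\gg^\C$ through the isomorphism $f_1$ of (\ref{eq:iso-quat}) and to read off both assertions from the geometry of the diagonal. By Lemma \ref{lem:intertwine} the orbit $\cO^\mathbb{H}$ goes to the $\GG^\C\times\GG^\C$-orbit of $(x,x)$, namely $\cO\times\cO$; the involution $\theta^\C$ becomes the transposition of factors; the subgroup $\GG^\C$ becomes the diagonal $\GG^\C_\Delta$; and $\KK^\mathbb{H}$ becomes $\KK^\C\times\KK^\C$. A short check (using $\theta(x)=-x$) gives $f_1(\tilde x)=(x,x)$ and $f_1\circ f_2(y)=(y,y)$, so $f_1$ carries $\GG^\C(\tilde x)$ to the diagonal $\Delta=\{(y,y):y\in\cO\}$ and $\KK^\mathbb{H}(\tilde x)$ to $\KK^\C(x)\times\KK^\C(x)$. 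Finally, since the induced Iwasawa decomposition comes from the anti-diagonal one $\GG\mathrm{AN}\times\GG\mathrm{A}\overline{\mathrm{N}}$, the Iwasawa ruling of $\cO^\mathbb{H}$ corresponds to the product of the Iwasawa ruling on the first factor with the opposite Iwasawa ruling on the second; I will write $\Pi$ and $\overline{\Pi}$ for these two projections of $\cO$, so that the Iwasawa projection of $\cO^\mathbb{H}$ reads $(\Pi,\overline{\Pi})$.

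For the first assertion, the fixed point set of the transposition on $\cO\times\cO$ is exactly the diagonal $\Delta$, which is the $\GG^\C_\Delta$-orbit of $(x,x)$; pulling back by $f_1$ this is $\GG^\C(\tilde x)$. That $\GG^\C(\tilde x)$ is a section follows by the argument of Proposition \ref{pro:fixedpoints-sections}(ii) and Lemma \ref{lem:fixedpoints-quat1}: the transposition interchanges the product ruling with its opposite, so its differential fixes the tangent space of $\Delta$ while swapping the two fibre directions; a tangent vector of $\Delta$ lying in an Iwasawa fibre would then be fixed and simultaneously lie in the opposite fibre, forcing it to vanish since $\nn^\mathbb{H}\cap\overline{\nn^\mathbb{H}}=\{0\}$, and two fixed points in one affine Iwasawa fibre would span an affine line inside $\Delta$, contradicting this. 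Hence the Iwasawa projection is injective on $\GG^\C(\tilde x)$.

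For the second assertion I would identify, in diagonal coordinates, those $(y,y)$ whose Iwasawa fibre meets $\KK^\C(x)\times\KK^\C(x)$; this happens precisely when $\Pi(y)\in\Pi(\KK^\C(x))$ and $\overline{\Pi}(y)\in\overline{\Pi}(\KK^\C(x))$. By Proposition \ref{pro:fixedpoints-sections}(ii) the first condition reads $\Pi(y)\in X^*$, i.e. $y\in\cA$. The crux is the second condition, where the opposite ruling enters: the Cartan involution sends $\gg_\alpha$ to $\gg_{-\alpha}$, so $-\theta$ interchanges the Iwasawa and opposite Iwasawa rulings of $\cO$; moreover $-\theta$ commutes with $-\theta\sigma$ and therefore preserves its fixed set $\KK^\C(x)=\cO^{-\theta\sigma}$. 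Consequently the level sets of $\overline{\Pi}$ are the $-\theta$-images of those of $\Pi$, so that $\overline{\Pi}(y)\in\overline{\Pi}(\KK^\C(x))$ becomes $\Pi(-\theta y)\in\Pi(-\theta(\KK^\C(x)))=\Pi(\KK^\C(x))=X^*$, i.e. $-\theta y\in\cA$, i.e. $y\in-\theta(\cA)$. Intersecting the two conditions gives $y\in\cA\cap-\theta(\cA)=\cB$, and since $f_1\circ f_2$ is the diagonal embedding, the set of related points of $\GG^\C(\tilde x)$ is $f_2(\cB)$, as claimed.

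The main obstacle is exactly this last computation: translating the opposite Iwasawa projection of the Lagrangian $\KK^\C(x)$ into a condition on the ordinary Iwasawa projection. The two facts that make it go through — that $-\theta$ swaps the two rulings (a root-space computation) and that $\KK^\C(x)$ is $-\theta$-stable (commutation of $-\theta$ with $-\theta\sigma$) — are the only genuinely Lie-theoretic inputs, and they are precisely what reconciles the answer with the domain $\cB=\cA\cap-\theta(\cA)$ of Proposition \ref{pro:domain-complexification}. Everything else is formal transport along $f_1$ and $f_2$.
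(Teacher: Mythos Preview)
Your proof is correct and follows essentially the same approach as the paper: transport everything through $f_1$ to the product $\cO\times\cO$, identify the fixed set of $\theta^\C$ with the diagonal, and then characterize the overlap with $\KK^\C(x)\times\KK^\C(x)$ factorwise. The only cosmetic differences are that the paper verifies the section property by a direct tangent-space computation at $(x,x)$ together with $\GG^\C_\Delta$-equivariance (rather than your swap-of-rulings argument borrowed from Proposition~\ref{pro:fixedpoints-sections}(ii)), and that for the second assertion the paper parametrizes points of $\KK^\mathbb{H}(\tilde{x}+\nn^\mathbb{H})$ and asks when they lie on the diagonal, whereas you parametrize diagonal points and ask when their fibre meets $\KK^\C(x)\times\KK^\C(x)$; these are the two sides of the same incidence condition and both lead to $g\cdot x\in\cA\cap(-\theta)(\cA)=\cB$.
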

\begin{proof}
The involution  $\theta^\C$ fixes $\tilde{x}$ so it acts on $\cO^\mathbb{H}$. 
We use the linear isomorphism $f_1$ to take the problem of describing its fixed point set to the product Lie algebra.
By items (ii) and  (iii) in Lemma \ref{lem:intertwine} the fixed point set
in  $\gg^\C\times \gg^\C$ is the intersection of the orbit $\GG^\C\times \GG^\C(x,x)$ with the diagonal. This is exactly
the orbit by the diagonal subgroup $ \GG^\C_\Delta$. Therefore, by item (i) in Lemma \ref{lem:intertwine}
\[{(\cO^\mathbb{H})}^{\theta^\C}=\GG^\C(\tilde{x}).\]

At $(x,x)$ the tangent space
of the orbit $\GG^\C_\Delta(x,x)$ and of the Iwasawa fiber are the diagonal subalgebra of the sum of the tangent spaces to the two Iwasawa fibers through $x$
and the product of both tangent spaces, respectively:
\[(\sum_{\alpha(x)>0} \gg_\alpha\oplus \gg_{-\alpha})_\Delta,\quad  \sum_{\alpha(x)>0} \gg_\alpha\times \sum_{\alpha(x)>0} \gg_{-\alpha}.\]
They are complementary subspaces. Because all of the structure is invariant by the action of  $\GG^\C\subset \GG^\mathbb{H}$, transversality holds at every point  of  $\GG^\C_\Delta(x,x)$.
Because $\GG^\C(\tilde{x})$ is the intersection of $\cO^\mathbb{H}$ with a vector subspace 
if two points of the same Iwasawa fiber intersected $\GG^\C(\tilde{x})$, then the line joining both points should also be 
in $\GG^\C(\tilde{x})$. This would contradict the transversality of the intersection. Therefore, $\GG^\C(\tilde{x})$ is a section 
of the Iwasawa ruling.

Let $z\in \cO^\mathbb{H}$ be a point in the image by $\KK^\mathbb{H}$ of the Iwasawa fiber over $-jix$. Its image $f_1(z)$ can be written as:
\[(h\cdot (x+u),h'\cdot (x+u')),\quad  h,h'\in \KK^\C,\, u\in \sum_{\alpha(x)>0} \gg_\alpha,\,u'\in \sum_{\alpha(x)>0} \gg_{-\alpha}.\]
The point $f_1(z)$ belongs to  $\GG^\C_\Delta(x,x)$ if and only if for some $g\in \GG^\C$, we have
\[(h\cdot (x+u),h'\cdot (x+u'))=(g\cdot x,g\cdot x).\]
If we write $x+u=l\cdot u$, $l\in \mathrm{N}$, $x+u'=l'\cdot x$, $l'\in \overline{\mathrm{N}}$, then this is equivalent to 
\[hl\cdot x=g\cdot x,\quad hl'\cdot x=g\cdot x,\]
which can be rewritten as $g\cdot x\in \cA\cap -\theta(\cA)=\cB$.

Because the composition $f_1\circ f_2$ is the diagonal embedding, we conclude that the Iwasawa map on $\cO^\mathbb{H}$ relates 
$\KK^\mathbb{H}(\tilde{x})$ with $f_2(\cB)\subset \GG^\C(\tilde{x})$.
 \end{proof}

To discuss why the orbits we constructed are complexifications of the orbits that are involved in the definition of $\beta$,
we introduce  $t$ the anti-holomorphic involution on $\gg^\mathbb{H}$ given by conjugation with respect to $j$. 
The point $\tilde{x}$ belongs to the fixed point set of $-t$.
 
\begin{lemma}\label{lem:complexifications} 
The anti-holomorphic involution $-t$ acts on $\cO^{\mathbb{H}}$ in an anti-complex affine fashion and it preserves the open subset 
 $\KK^\mathbb{H}(\tilde{x}+\nn^\mathbb{H})$. The isomorphism  of Lie algebras $f_1$ (\ref{eq:iso-quat}) 
 takes the real forms with respect to $-t$ 
 of  $\KK^\mathbb{H}(\tilde{x})$, $\GG^\C(\tilde{x})$, and $\KK^\mathbb{H}(\tilde{x}+\nn^\mathbb{H})$ to 
 $\KK^\C_{\Delta^\theta}(x,x)$, $\GG_\Delta(x,x)$, and $\KK^\C_{\Delta^\theta}\mathrm{N}_{\Delta^\theta}(x,x)$, respectively.
\end{lemma}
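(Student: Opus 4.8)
The plan is to push the entire statement through the isomorphism $f_1$ of (\ref{eq:iso-quat}) to the product $\gg^\C\times\gg^\C$, where $-t$ becomes completely explicit, and then read everything off there. \emph{First} I would record the key computation. Writing $f_1(u+jv)=(u+iv,\theta(u-iv))$ and using that $\theta$ is antilinear for the $i$-complex structure while $t(u+jv)=u-jv$, a direct check gives
\[
f_1\circ t=\big((a,b)\mapsto(\theta b,\theta a)\big)\circ f_1,\qquad\text{hence}\qquad f_1\circ(-t)=\big((a,b)\mapsto(-\theta b,-\theta a)\big)\circ f_1 .
\]
So under $f_1$ the involution $-t$ is the antiholomorphic swap-with-$(-\theta)$ map. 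In particular $-t$ is $j$-antilinear; since $f_1(\tilde x)=(x,x)$ (which uses $x\in i\gg$, so $\theta x=-x$) it fixes $\tilde x$, and since the swap-with-$(-\theta)$ map visibly preserves $(\GG^\C\times\GG^\C)(x,x)$ it restricts, by item (ii) of Lemma \ref{lem:intertwine}, to an antiholomorphic involution of $\cO^\mathbb{H}$.

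\emph{Next} I would deduce the ``anti-complex affine'' clause and the invariance of $\KK^\mathbb{H}(\tilde x+\nn^\mathbb{H})$ from this normal form. The anti-diagonal decomposition $\GG\mathrm{AN}\times\GG\mathrm{A}\overline{\mathrm{N}}$ rules the product orbit by products of a positive Iwasawa fibre in the first factor and an opposite one in the second; because $\theta$ exchanges $\nn$ and $\overline{\nn}$, the swap-with-$(-\theta)$ map carries this product ruling to itself, so $-t$ preserves the Iwasawa ruling and acts affinely, being linear on $\gg^\mathbb{H}$. Transporting $\KK^\mathbb{H}(\tilde x+\nn^\mathbb{H})$ by $f_1$ gives, via item (i) of Lemma \ref{lem:intertwine} and the induced nilradical $f_1(\nn^\mathbb{H})=\nn\times\overline{\nn}$, the product $\cA\times(-\theta(\cA))$ with $\cA=\KK^\C\mathrm{N}(x)$; the map $(a,b)\mapsto(-\theta b,-\theta a)$ clearly preserves this product, which is the invariance claim.

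\emph{Then} I would compute the three $-t$-real forms directly in the product. For $\KK^\mathbb{H}(\tilde x)\leftrightarrow\KK^\C(x)\times\KK^\C(x)$ the fixed-point condition is $a=-\theta b$; writing $b=\Ad_h x$ and using the identity $\Ad_{\theta g}x=-\theta\,\Ad_g x$ (which follows from $\theta x=-x$), the fixed set becomes $\{(\Ad_{\theta h}x,\Ad_h x):h\in\KK^\C\}=\KK^\C_{\Delta^\theta}(x,x)$. For $\GG^\C(\tilde x)\leftrightarrow\GG^\C_\Delta(x,x)$ a diagonal point $(y,y)$ is $-t$-fixed iff $-\theta y=y$, i.e. $y\in\cO^{-\theta}=\GG(x)=X$ by (\ref{eq:real-form-1}), so the fixed set is $\GG_\Delta(x,x)$. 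For $\KK^\mathbb{H}(\tilde x+\nn^\mathbb{H})\leftrightarrow\cA\times(-\theta(\cA))$ the fixed set is $\{(p,-\theta p):p\in\cA\}$, and since $\cA=\KK^\C\mathrm{N}(x)$ together with $\Ad_{\theta(gn)}x=-\theta\,\Ad_{gn}x$ exhibits exactly $\KK^\C_{\Delta^\theta}\mathrm{N}_{\Delta^\theta}(x,x)$. These are the three claimed images.

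\emph{The main obstacle} is bookkeeping rather than anything conceptual: one must keep the two commuting complex structures $i$ and $j$ separate and respect the $i$-antilinearity of $\theta$ when deriving the normal form for $-t$, and then track the $\theta$-twisted diagonal subgroups $\Delta^\theta$, the entire matching hinging on the single identity $\Ad_{\theta g}x=-\theta\,\Ad_g x$ coming from $x\in i\gg$. Granting Lemma \ref{lem:intertwine} (which the paper treats as straightforward computations) and the description $\cO^{-\theta}=X$ of (\ref{eq:real-form-1}), no further input is required.
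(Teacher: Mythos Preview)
Your proof is correct and follows essentially the same route as the paper: push everything through $f_1$, identify $-t$ with the swap-with-$(-\theta)$ map $(a,b)\mapsto(-\theta b,-\theta a)$ on $\gg^\C\times\gg^\C$, and then read off the three fixed-point sets from this normal form. The only cosmetic difference is in the third real form: you compute the fixed set $\{(p,-\theta p):p\in\cA\}$ directly and match it with $\KK^\C_{\Delta^\theta}\mathrm{N}_{\Delta^\theta}(x,x)$, whereas the paper argues more indirectly (the orbit $\GG^\C_{\Delta^\theta}(x,x)$ sits in the real form of $\cO^\mathbb{H}$, its open piece $\KK^\C_{\Delta^\theta}\mathrm{N}_{\Delta^\theta}(x,x)$ lies in the bundle, and since $-t$ is a bundle automorphism preserving the section the real form must be a subbundle over the fixed base, forcing equality).
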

\begin{proof}
The linear isomorphism $f_1$ takes $t$ to the transposition of factors followed by $\theta$ on both factors. 
Therefore, $t$ on the product Lie algebra sends the
Iwasawa fiber $\nn(x)\times\overline{\nn}(x)$ to itself in an anti-complex fashion. It also sends the maximal compact subgroup $\GG\times \GG$ and 
 $\KK^\C\times \KK^\C$ to themselves. 
Hence, $-t$ acts by anti-complex affine transformations on $\cO^{\mathbb{H}}$ preserving $\KK^\mathbb{H}(\tilde{x}+\nn^\mathbb{H})$.

Because the involutions $t,\theta^\C$, and $\sigma^\C$ commute, $-t$ acts on
$\KK^\mathbb{H}(\tilde{x})$ and $\GG^\C(\tilde{x})$. By (i) in Lemma \ref{lem:intertwine}
the image by $f_1$ of these orbits are the orbits $\GG^\C_\Delta(x,x)$ and $\KK^\C\times \KK^\C(x,x)$. Therefore, 
the images by $f_1$ of their real forms with respect to $-t$ equal the
orbits  $\KK^\C_{\Delta^\theta}(x,x)$ and $\GG_{\Delta}(x,x)$, respectively. Likewise, the image by $f_1$
of the real form of $\cO^\mathbb{H}$ must contain the orbit $\GG^\C_{\Delta^\theta}(x,x)$. Its open subset 
$\KK^\C_{\Delta^\theta}\mathrm{N}_{\Delta^\theta}(x,x)$ is contained in $\KK^\mathbb{H}(\tilde{x}+\nn^\mathbb{H})$ and thus on its 
real form. Because $-t$ acts by bundle isomorphism preserving the section $\KK^\mathbb{H}(\tilde{x})$, 
the real form must be a subbundle over its fixed point set. Hence, the image by $f_1$ of the real form of $\KK^\mathbb{H}(\tilde{x}+\nn^\mathbb{H})$
must be 
\begin{equation}\label{eq:fixed-point-t}
\KK^\C_{\Delta^\theta}\mathrm{N}_{\Delta^\theta}(x,x)=\KK^\C_{\Delta^\theta}(x+m,x-\theta m),\quad m\in \nn(x).
\end{equation}
\end{proof}

By Lemma \ref{lem:fixedpoints-quat1} $\KK^\mathbb{H}(\tilde{x})$ is a Lagrangian submanifold of the orbit $\cO^\mathbb{H}$
which is a section of the Iwasawa ruling. Therefore, by Proposition \ref{pro:hol-ctg}, the open subset $\KK^\mathbb{H}(\tilde{x}+\nn^\mathbb{H})$
is canonically identified with the holomorphic cotangent bundle of $\KK^\mathbb{H}(\tilde{x})$; 
in fact the statement about the symplectic form is not needed here and all is needed is the use of the Killing form.
By item (i) in Lemma \ref{lem:intertwine} the linear isomorphism $f_1$ takes the vector bundle $\KK^\mathbb{H}(\tilde{x}+\nn^\mathbb{H})$
to the product vector bundle
$\KK^\C(x)(x+\nn(x))\times \KK^\C(x+\overline{\nn}(x))$. It also takes the Killing form on $\gg^\mathbb{H}$ to the product Killing form. Hence, 
$f_1$ takes the cotangent bundle structure of $\KK^\mathbb{H}(\tilde{x}+\nn^\mathbb{H})$ to the 
product cotangent bundle structure on $\KK^\C(x)(x+\nn(x))\times \KK^\C(x+\overline{\nn}(x))$.
We shall modify the latter identification by composing with bundle automorphisms of each factor. Specifically, we 
will fix the identification 
\begin{equation}\label{eq:rescaled-ctg}
T^{*1,0}\KK^\C(x)\times T^{*1,0}\KK^\C(x)\overset{\chi_1}{\cong}  \KK^\C(x)(x+\nn(x))\times \KK^\C(x+\overline{\nn}(x)),
 \end{equation}
which sends a vector in the fibers to a covector by means of one half of the Killing form in the first factor and minus one half
of the Killing form in the second factor. Via $f_1$ this is transferred to another identification 
\begin{equation}\label{eq:rescaled-ctg2}
 T^{*1,0}\KK^\C(\tilde{x}) \overset{\chi^\mathbb{H}}{\cong} \KK^\mathbb{H}(\tilde{x}).
 \end{equation}
 
We define the real analytic diffeomorphism 
\begin{equation}\label{eq:realform-identification}
 f_3: \cA\to \KK^\C_{\Delta^\theta}\mathrm{N}_{\Delta^\theta}(x,x),\quad hl\cdot x\mapsto (hl\cdot x,\theta(hl)\cdot x).
\end{equation}

\begin{lemma}\label{lem:twisted-ctg} The identification $\chi_1$ in (\ref{eq:rescaled-ctg}) has the following properties:
\begin{enumerate}[(i)]
 \item It takes the real form  $\KK^\C_{\Delta^\theta}\mathrm{N}_{\Delta^\theta}(x,x)$
to the cotangent bundle of $\KK^\C_{\Delta^\theta}(x,x)$, which is the real form of the zero section. 
Therefore, the holomorphic cotangent bundle on the right--hand side 
of (\ref{eq:rescaled-ctg}) is the complexification of the cotangent bundle of the real form 
$\KK^\C_{\Delta^\theta}(x,x)\subset \KK^\C\times \KK^\C(x,x)$.
\item Its composition on the right with  $T^{*}\KK^\C(x)\times T^{*}\KK^\C(x)\equiv T^{*1,0}\KK^\C(x)\times T^{*1,0}\KK^\C(x)$ and on the left with 
$f_3^{-1}$ is the identification
\[T^*\KK^\C(x)\equiv T^{*1,0}\KK^\C(x)\overset{\chi}{\cong}\cA.\]
\end{enumerate}
\end{lemma}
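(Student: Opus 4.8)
The plan is to prove both items by tracking the $-t$-real forms through the factorwise Killing-form duality that defines $\chi_1$, using the explicit description of those real forms already obtained in Lemma \ref{lem:complexifications}.

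For item (i) I would start from (\ref{eq:fixed-point-t}): a generic point of $\KK^\C_{\Delta^\theta}\mathrm{N}_{\Delta^\theta}(x,x)$ is $(g\cdot(x+m),\theta g\cdot(x-\theta m))$ with $g\in\KK^\C$ and $m\in\nn(x)$. I apply $\chi_1^{-1}$ one factor at a time. In the first factor the vertical vector $g\cdot m$ is dual, through one half of the Killing form, to the covector $\tfrac12\langle g\cdot m,\cdot\rangle$ at $g\cdot x$; in the second factor the vertical vector is $\theta g\cdot(-\theta m)=-\theta(g\cdot m)$, which through minus one half of the Killing form is dual to $\tfrac12\langle\theta(g\cdot m),\cdot\rangle$ at $\theta g\cdot x$. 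The two resulting covectors are $\theta$-related, which is exactly the condition for the pair to be the complexification of a single real covector on the diagonal-$\theta$ submanifold $\KK^\C_{\Delta^\theta}(x,x)$. Equivalently---and this is the clean way to phrase it---the involution $-t$ acts anti-complex affinely and preserves the zero section by Lemma \ref{lem:complexifications}, and since it preserves the Killing form the opposite signs $\pm\tfrac12$ are chosen precisely so that $\chi_1$ carries $-t$ to the standard antiholomorphic cotangent lift of its restriction to the zero section; the fixed point set of such a lift is the genuine cotangent bundle of the fixed base. This matches $\KK^\C_{\Delta^\theta}\mathrm{N}_{\Delta^\theta}(x,x)$ with $T^*(\KK^\C_{\Delta^\theta}(x,x))$ and exhibits the holomorphic bundle on the right of (\ref{eq:rescaled-ctg}) as the complexification of $T^*\KK^\C_{\Delta^\theta}(x,x)$.

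Item (ii) I would prove by a direct chase through the definitions, now that the real forms are identified. Starting from a real covector $\phi$ on $\KK^\C(x)$ at $h\cdot x$, the map $\chi$ of Proposition \ref{pro:hol-ctg} sends it to $hl\cdot x\in\cA$, where $l\in\mathrm{N}$ and $m:=l\cdot x-x\in\nn(x)$ is the vertical vector dual to $\phi$ through the real part of one half of the Killing form. I feed $\phi$ through the composition: item (i) identifies the real structure, so $\phi$ becomes the $\theta$-related pair $\bigl(\tfrac12\langle h\cdot m,\cdot\rangle,\tfrac12\langle\theta(h\cdot m),\cdot\rangle\bigr)$; applying $\chi_1$ as computed in item (i) returns the product-bundle point $(h\cdot(x+m),\theta h\cdot(x-\theta m))$; and applying $f_3^{-1}$, which by (\ref{eq:realform-identification}) sends $hl\cdot x$ to $(hl\cdot x,\theta(hl)\cdot x)=(h\cdot(x+m),\theta h\cdot(x-\theta m))$, returns $hl\cdot x$. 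Thus the composition agrees with $\chi$.

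The main obstacle I anticipate is the normalization bookkeeping in item (i): one must verify that the asymmetric choice of $+\tfrac12$ and $-\tfrac12$ of the Killing form in the two factors of (\ref{eq:rescaled-ctg}) is exactly what turns $-t$---which under $f_1$ is the transposition of factors followed by $\theta$ on each, by Lemma \ref{lem:intertwine}---into the untwisted antiholomorphic cotangent lift, so that its fixed set is the honest cotangent bundle rather than a rescaled or affinely shifted copy. Tracking the interaction of this sign with the map $\theta\colon\nn\to\overline{\nn}$ and with the Killing-form duality on each factor is the one computation that requires care; everything else is formal once the real forms are in hand.
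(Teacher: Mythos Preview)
Your proposal is correct and follows essentially the same route as the paper's proof: both write a generic point of the real form as $(h\cdot(x+m),\theta h\cdot(x-\theta m))$, compute the resulting covector under $\chi_1^{-1}$, and verify---either via your ``$\theta$-related pair'' criterion or, equivalently, via the paper's check that it annihilates the $-1$-eigenbundle of the involution on the base---that it lies in the real cotangent bundle of $\KK^\C_{\Delta^\theta}(x,x)$; item (ii) is then the same direct chase through $f_3$. The normalization concern you flag is exactly where the computation sits, and note that $\chi$ itself uses the full Killing form rather than one half of it, so the factor of two is recovered when the two $\tfrac12$-contributions from $\chi_1$ are summed over a diagonal-$\theta$ tangent vector.
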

\begin{proof}
By (\ref{eq:fixed-point-t}), a point $z$ in the real form of $\KK^\C(x)(x+\nn(x))\times \KK^\C(x+\overline{\nn}(x))$ can be written:
\[z=(hl\cdot x,\theta(hl)\cdot x)=(h\cdot(x+m),\theta(h)\cdot (x-\theta m)),\quad h\in \KK^\C,\,\,l\in \mathrm{N},\,\,m\in \nn(x)\in \overline{\mathrm{N}}.\]
The identification $\chi_1$ takes $z$ to a complex covector. We shall take its real part to work in the ordinary cotangent bundle. The action on
the tangent vector $(h\cdot [u,x],\theta(h)\cdot [v,x])$, $u,v\in \kk^\C$, is given by 
\begin{equation}\label{eq:real-covecotr}
\frac{1}{2}\Re\langle m,u \rangle+\frac{1}{2}\Re\langle\theta(h)\cdot \theta m,\theta(h)\cdot v\rangle
=\frac{1}{2}\Re\langle m,u \rangle+\frac{1}{2}\Re\langle \theta m,v\rangle,
\end{equation} 
where we used that  $\theta$ is an automorphism of the real part of the Killing form.
 By Lemma \ref{lem:complexifications}, the real form of the zero section is $\KK^\C_{\Delta^\theta}(x,x)\subset \KK^\C\times \KK^\C(x,x)$. Therefore, vectors in the
$\pm 1$-eigen-bundles are, respectively, of the form
\[(h_\cdot [u,x],\theta(h)\cdot[\theta(u),x]),\quad (h\cdot [u,x],\theta(h)\cdot[-\theta(u),x]).\]
Equation (\ref{eq:real-covecotr})
implies that $z$ annihilates the $-1$-eigen-space, which proves item (i). A posteriori we deduce that the involution on the right--hand side 
of (\ref{eq:rescaled-ctg}) is the cotangent lift of the involution on the zero section (the cotangent lift is naturally
defined in the real cotangent bundle). 

A point $hl\cdot x=h\cdot (x=m)\in \cA$ corresponds by $\chi$ in (\ref{eq:holomorphic-ctg-bundle}) to a real covector whose 
action on the vector $h_*[u,x]$ is $\Re\langle m,u\rangle $. Because $f_3$ sends the point and vector to 
\[(hl\cdot x,\theta(hl)\cdot x)=(h\cdot(x+m),\theta(h)\cdot (x-\theta m)),\quad (h_\cdot [u,x],\theta(h)\cdot[\theta(u),x]),\]
Equation (\ref{eq:real-covecotr}) shows that $f_3$ is compatible with the cotangent bundle identifications $\chi$ and $\chi_1$.
\end{proof}

\begin{proof}[Proof of Proposition \ref{pro:domain-complexification}]

The equality (\ref{eq:Liouville-Hamiltonian}) implies that discussing domains of definition of $\Lambda$ inside of $\cA$
is the same as discussing domains of definition of the complexification of $\beta\in \Omega^1(X^*)$. 

By Lemmas \ref{lem:fixedpoints-quat1} and \ref{lem:fixedpoints-quat2},  the affine bundle $\KK^\mathbb{H}(\tilde{x})$
has sections  $\KK^\mathbb{H}(\tilde{x})$ and $\GG^\C(\tilde{x})$. The former section is used as zero section of a vector bundle 
structure and the latter is seen as a section of this bundle. By Lemma \ref{lem:fixedpoints-quat2} only the open subset $f_2(\cB)\subset \GG^\C(\tilde{x})$
defines a section. Because  $\chi^\mathbb{H}$ in (\ref{eq:rescaled-ctg2}) is an identification of vector bundles, the section $f_2(\cB)$ is identified
with a 1-form $\Sigma'\in\Omega^{1,0}(\KK^\mathbb{H}(\tilde{x}))$. The Iwasawa projection identifies it with a 1-form 
$\Sigma\in \Omega^{1,0}(f_2(\cB))$.

By Lemma \ref{lem:complexifications}, the linear isomorphism $f_1$ takes the real form with respect to $-t$ 
of the affine bundle and both sections to  $\KK^\C_{\Delta^\theta}(x,x)$, $\GG_\Delta(x,x)$ and $\KK^\C_{\Delta^\theta}\mathrm{N}_{\Delta^\theta}(x,x)$, respectively.
Strictly speaking, the lemma discusses the real form for the whole section $\GG^\C(\tilde{x})$. Because 
$f_2(\cB)=\GG^\C(\tilde{x})\cap \KK^\mathbb{H}(\tilde{x}+\nn^\mathbb{H})$ and the involution preserves the affine fibers, the real form
of $f_2(\cB)$ is sent to 
\[\GG_\Delta(x,x)\cap \KK^\C(x+\nn)\times \KK^\C(x+\overline{\nn})=X^*_\Delta.\]
Item (i) in Lemma \ref{lem:twisted-ctg} implies that $\Sigma'$ is the complexification of 
\[\Re\Sigma'|_ {\KK^\mathbb{H}(\tilde{x})^{-t}}\in \Omega^1(\KK^\mathbb{H}(\tilde{x})^{-t}),\]
and the same holds for $\Sigma$.
By item (ii) in \ref{lem:twisted-ctg}
\[(f_1^{-1}\circ f_3)^*\Re\Sigma=\beta\in \Omega^1(X^*).\]
 Because the composition $f_1\circ f_2$ is the diagonal embedding
 \[f_1\circ f_2|_{X^*}=f_3.\]
 The conclusion is that 
 \[f_2^*\Sigma\in\Omega^{1,0}(\cB),\quad f_2^*\Re\Sigma|_{X^*}=\beta\in \Omega^1(X^*),\]
 and therefore the complexification of $\beta$ is defined in $\cB$. 
 
To discuss topological properties of $\cB$ we use the viewpoint of complex homogeneous spaces on  (\ref{eq:quasiproj-orbit}).
We may take $\GG^\C$ to be a linear algebraic group. Then the Iwasawa map is a morphism from an algebraic to a projective variety.
Because $\KK^\C$ is an algebraic group $X^*$ is a Zariski open subset, and so is its inverse image $\cA$. The reasoning for $-\theta(\cA)$ is the same
once we replace the parabolic subgroup $\mathrm{P}$ by its opposite $\theta(\mathrm{P})$. Therefore, $\cB$ is a Zariski open subset and in particular  it is connected.

By construction $\cA$ is saturated by $\KK^\C$-orbits. The same applies to $-\theta(\cA)=\KK^\C(x+\overline{\nn})$. Hence
$\cB$ is saturated by orbits of $\KK^\C$.

Because $Y$ is $\KK$-invariant and $\cB$ is connected and $\KK^\C$-invariant, by analytic continuation, $\Lambda$ is also $\KK^\C$-invariant.
\end{proof}







\begin{thebibliography}{xxxxx}

\bibitem{ABHLL} Alekseev, A; Berenstein, A; Hoffman, B; Lane, J; Li, Y. Action-angle coordinates on coadjoint orbits and multiplicity free spaces from partial tropicalization.  Adv. Math. 414 (2023), Paper No. 108856, 84 p. Preprint arXiv:2003.13621.
\bibitem{ABB} Azad, H.; van den Ban, E.; Biswas, I. Symplectic geometry of semisimple orbits. Indag. Math. (N.S.) 19 (2008), no. 4, 507--533.
\bibitem{AL} Azad, H.; Loeb, J.-J. Plurisubharmonic functions and the Kempf-Ness theorem. Bull. London Math. Soc. 25 (1993), no. 2, 162--168.
\bibitem{Au} Audin, M. On the topology of Lagrangian submanifolds. Examples and counter-examples. Port. Math. (N.S.) 62 (2005), no. 4, 375--419.
\bibitem{BG} Bedulli, L.; Gori, A. Homogeneous Lagrangian submanifolds. Comm. Anal. Geom. 16 (2008), no. 3, 591--615.
\bibitem{BBLU} Bhattacharyya, R.; Burns, D.; Lupercio, E.; Uribe, A. The exponential map of the complexification of the group of analytic Hamiltonian diffeomorphisms. Pure Appl. Math. Q. 18 (2022), no. 1, 33--70.
\bibitem{Bir} Biran, P. Lagrangian barriers and symplectic embeddings. Geom. Funct. Anal. 11 (2001), no. 3, 407--464.
\bibitem{Bi} Birkes, D. Orbits of linear algebraic groups. Ann. of Math. (2) 93 (1971), 459--475.
\bibitem{BJ} Bolsinov, A.; Jovanovic, B. Complete involutive algebras of functions on cotangent bundles of homogeneous spaces. Math. Z. 246 (2004), no. 1-2, 213--236. 
\bibitem{BL} Bremigan, R.; Lorch, J. Orbit duality for flag manifolds. Manuscripta Math. 109 (2002), no. 2, 233--261.
\bibitem{DL} Di Scala, A.; Loi, A. Symplectic duality of symmetric spaces. Adv. Math. 217 (2008), no. 5, 2336--2352. 
\bibitem{Do} Donaldson, S. K. Holomorphic discs and the complex Monge-Ampère equation. J. Symplectic Geom. 1 (2002), no. 2, 171--196.
\bibitem{D} Duistermaat, J. J. On global action-angle coordinates. Comm. Pure Appl. Math. 33 (1980), no. 6, 687--706.
\bibitem{DKV} Duistermaat, J. J.; Kolk, J. A. C.; Varadarajan, V. S. Functions, flows and oscillatory integrals on flag manifolds and conjugacy classes in real semisimple Lie groups. Compositio Math. 49 (1983), no. 3, 309--398.
\bibitem{FLM} Fang, X.; Littelmann, P.; Pabiniak, M. Simplices in Newton-Okounkov bodies and the Gromov width of coadjoint orbits. Bull. Lond. Math. Soc. 50 (2018), no. 2, 202--218. 
 \bibitem{HPS} Hirsch, M. W.; Pugh, C. C.; Shub, M. Invariant manifolds. Lecture Notes in Mathematics, Vol. 583. Springer, Berlin-New York, 1977.
 \bibitem{HL} Hoffman, B.; Lane, J. Stratified Gradient Hamiltonian Vector Fields and Collective Integrable Systems. Preprint:arXiv:200813656.
\bibitem{Li} Lisiecki, W. Holomorphic Lagrangian bundles over flag manifolds. J. Reine Angew. Math. 363 (1985), 174--190. 
\bibitem{Mt} Martínez Torres, D. Semisimple coadjoint orbits and cotangent bundles. Bull. Lond. Math. Soc. 48 (2016), no. 6, 977--984.
\bibitem{Ma} Matsuki, T. The orbits of affine symmetric spaces under the action of minimal parabolic subgroups. J. Math. Soc. Japan 31 (1979), no. 2, 331--357.
\bibitem{My} Mykytyuk, I. Integrability of geodesic flows for metrics on suborbits of the adjoint orbits of compact groups. Transform. Groups 21 (2016), no. 2, 531–553.
\bibitem{Na} Nagano, T. $1$-forms with the exterior derivative of maximal rank. J. Differential Geom. 2 (1968), 253--264.
\bibitem{NU} Nohara, Y.; Ueda, K. Floer cohomologies of non-torus fibers of the Gelfand-Cetlin system. J. Symplectic Geom. 14 (2016), no. 4, 1251--1293.
\bibitem{OU} Oakley, J.; Usher, M. On certain Lagrangian submanifolds of $S^2\times S^2$ and $\mathbb{C}{\rm P}^n$. Algebr. Geom. Topol. 16 (2016), no. 1, 149--209.
\bibitem{Sa} Sawyer, P. Computing the Iwasawa decomposition of the classical Lie groups of noncompact type using the $QR$ decomposition. Linear Algebra Appl. 493 (2016), 573--579. 
\bibitem{Se} Sekiguchi, J. Remarks on real nilpotent orbits of a symmetric pair. J. Math. Soc. Japan 39 (1987), no. 1, 127--138.
\bibitem{Sp} Springer, T. Some results on algebraic groups with involutions. Algebraic groups and related topics (Kyoto/Nagoya, 1983), 525--543, Adv. Stud. Pure Math., 6, North-Holland, Amsterdam, 1985.
\bibitem{St} Steinberg, R. Endomorphisms of linear algebraic groups. Memoirs of the American Mathematical Society, No. 80 American Mathematical Society, Providence, RI 1968 108 p. 
\bibitem{V} Vergne, M. Instantons et correspondance de Kostant-Sekiguchi. C. R. Acad. Sci. Paris Sér. I Math. 320 (1995), no. 8, 901--906.
\end{thebibliography}
\end{document}